\numberwithin{equation}{section}
\newcommand{\N}{\mathbb{N}}
\newcommand{\ideal}{\mathcal{I}}
\newcommand{\idealj}{\mathcal{J}}
\newcommand\fin{{\sf Fin}}
\newcommand{\seq}{{\N}^{<\omega}}
\newtheorem{theorem}{Theorem}[section]
\newtheorem{lemma}[theorem]{Lemma}
\newtheorem{proposition}[theorem]{Proposition}
\newtheorem{corollary}[theorem]{Corollary}
\newtheorem{claim}[theorem]{Claim}
\theoremstyle{definition}
\newtheorem{remark}[theorem]{Remark}
\newtheorem{definition}[theorem]{Definition}
\newtheorem{example}[theorem]{Example}
\title{On the complementation of spaces of $\mathcal I$-null sequences}
\author{Michael A. Rincón-Villamizar, Carlos  Uzcátegui Aylwin}
\begin{document}

\begin{abstract}
We study the complementation (in $\ell_\infty$) of the Banach space $c_{0,\mathcal{I}}$, consisting of all bounded sequences $(x_n)$ that $\mathcal{I}$-converge to $0$, endowed with the supremum norm, where $\mathcal{I}$ is an ideal of subsets of $\mathbb{N}$. We show that the complementation of these spaces is related to a condition requiring that the ideal is the intersection of a countable family of maximal ideals, which we refer to as $\omega$-maximal ideals. We prove that $\mathcal{I}$  is $\omega$-maximal exactly when $c_{0,\mathcal{I}}$ is the kernel of an operator from $\ell_\infty$ to itself satisfying certain property. 
In addition, we show that the existence of a Banach lattice embedding from $\ell_\infty/c_{0,\mathcal{I}}$ into $\ell_\infty$ is equivalent to $\mathcal{I}$ being a $\omega$-maximal ideal. Finally, we show that $\mathcal I$ is a strongly $\omega$-maximal ideal if and only if the quotient $\ell_\infty/c_{0,\mathcal I}$ is Banach lattice isomorphic to $\ell_\infty$.   Additionally, we characterize when the quotient space $c_{0,\mathcal{J}} / c_{0,\mathcal{I}}$ is finite-dimensional for two ideals $\mathcal{I} \subsetneq \mathcal{J}$.
\end{abstract}
\maketitle

\section{Introduction}
An ideal on $\mathbb{N}$ is a collection $\ideal$  of subsets of $\mathbb{N}$ closed under finite unions and taking subsets of its elements. A sequence $(x_n)$ in a Banach space $X$ is said to be $\mathcal{I}$-convergent to $x\in X$, denoted as $\mathcal{I}$-$\lim x_n=x$, if for each $\varepsilon>0$, the set $\{n\in\mathbb{N}\,\colon\,\|x_n-x\|\geq\varepsilon\}$ belongs to $\mathcal{I}$. When $\mathcal{I}$ is $\fin$, the ideal of finite subsets of $\mathbb{N}$, we have the classical convergence in $X$. For this reason, it is natural—and we will adopt this assumption—to require that $\fin$ is contained in every ideal under consideration.  The $\mathcal{I}$-convergence was introduced in \cite{k-s-w}, although many authors had already studied this concept in particular cases and in different contexts (see, for instance, \cite{balcerzak,b-s-w,fast,filipow,k-m-s}).  We are interested in the following space
\begin{gather*}
c_{0,\mathcal{I}}=\{(x_n)\in\ell_\infty\,\colon\,\mathcal{I}-\lim x_n=0\}.
\end{gather*}
We showed in \cite{rincon-uzcategui} that $c_{0,\mathcal{I}}$ is a closed subspace of $\ell_\infty$, and that some of its Banach and Banach-lattice properties are closely related to the combinatorial and topological properties of the ideal $\mathcal{I}$. For instance, a 
closed sublattice of $\ell_\infty$ is an ideal exactly when it is of the form $c_{0,\ideal}$ for some ideal  $\ideal$  on $\N$. Furthermore,  $c_{0,\mathcal I}$ and $c_{0,\mathcal J}$ are isometric if, and only if,  $\mathcal I$ and $\mathcal J$ are isomorphic.
The main objective of this paper is to investigate the phenomenon of complementation of $c_{0,\mathcal{I}}$ in $\ell_\infty$. Some results in this direction are already known.

We call a proper  ideal $\ideal$  \textit{complemented} if $c_{0,\ideal}$ is complemented in $\ell_\infty$. There has been some recent interest in this notion \cite{hrusak-saenz2025,  kania,Leonetti2018}. Leonetti \cite{Leonetti2018} proved that any meager ideal is not complemented, where a meager ideal refers to an ideal that is meager as a subset of the Cantor cube $\{0,1\}^\N$, identified via characteristic functions. The key element of his argument is the existence of uncountable families of subsets in $\mathcal{P}(\mathbb{N}) \setminus \mathcal{I}$ such $A\cap B\in \ideal$ for any $A, B$ in the family (the so-called $\mathcal{I}$-AD families).   We show that if $\ideal$ is complemented, then any $\ideal$-AD family is at most countable and therefore $\ideal$ is not meager and  does not have the Baire property  as a subset of   $\{0,1 \}^\N$.

On the other hand, Kania \cite{kania} observed that the intersection of a finite collection of maximal ideals is complemented (for a proof, see \cite{rincon-uzcategui}).
We call an ideal {\em $\omega$-maximal} if it can be written as a countable intersection of maximal ideals.
We extend Kania's result to certain special $\omega$-maximal ideals. Finally, continuing this approach, Hru\v{s}\'ak-S\'aenz \cite{hrusak-saenz2025} found very recently a characterization of complemented ideals in terms of the quotient $\ell_\infty/ c_{0,\ideal}$. Namely, $\ideal$ is complemented if and only if the quotient $\ell_\infty/ c_{0,\ideal}$ is isomorphic to a subspace of $\ell_\infty$. 

Motivated by the discussion above, our work focuses on analyzing the structural properties of complemented ideals.

We say that an ideal is {\em strongly $\omega$-maximal} if there exists a collection
$\{\mathcal{I}_n \colon n \in \mathbb{N} \}$ of maximal ideals such that $\mathcal{I} = \bigcap_n \mathcal{I}_n$, and the family $\{\mathcal{I}_n^* \colon n \in \mathbb{N} \}$ is discrete in $\beta\mathbb{N}$ (where $\mathcal{I}^*$ denotes the dual filter and $\beta\mathbb{N}$ is the Stone–Čech compactification of $\mathbb{N}$).
We provide an example of an $\omega$-maximal ideal that is not strongly $\omega$-maximal. Since $\omega$-maximal ideals play a central role in our results, we present additional properties about them in Section \ref{kappa-max}. The dual notion of a filter  represented as an intersection of a finite or countable family of ultrafilters has been recently studied by Bergman \cite{Bergman2014} and Kadets,  Seliutin  and Tryba \cite{kadetsetal2022}. In particular, we show that the notion introduced in \cite{kadetsetal2022} of a filter admitting a minimal countable representation corresponds to our notion of strongly $\omega$-maximal ideal (see Remark \ref{minimal-repre}).

In Section \ref{dimension-quotient}, we study the quotient $c_{0,\mathcal{J}}/c_{0,\mathcal{I}}$ when $\mathcal{I} \subsetneq \mathcal{J}$ are ideals. We provide a combinatorial characterization of the finite-dimensionality of $c_{0,\mathcal{J}}/c_{0,\mathcal{I}}$.
In particular, we obtain that $\ell_\infty/c_{0,\mathcal{I}}$ is finite-dimensional if and only if $\mathcal{I}$ is a finite intersection of maximal ideals.

In Section \ref{complementacion}, we discuss properties of complemented ideals. We show that the sum and the countable intersection of complemented ideals is complemented. So, in particular, every $\omega$-maximal ideal is complemented. We also prove that any strongly $\omega$-maximal ideal is complemented by showing an explicit projection (see Theorem \ref{proyecciones1}). Furthermore, we characterize strongly $\omega$-maximal ideals in terms of the type of projections on their associated space $c_{0,\mathcal{I}}$ (see Theorem \ref{proyecciones1b}). In addition, we show that the existence of a Banach lattice embedding from $\ell_\infty/c_{0,\mathcal{I}}$ into $\ell_\infty$ to $\ell_\infty$ is equivalent to $\mathcal{I}$ being a $\omega$-maximal ideal (see Theorem \ref{implicaciones}). Finally, we show that $\mathcal I$ is a strongly $\omega$-maximal ideal if and only if the quotient $\ell_\infty/c_{0,\mathcal I}$ is Banach lattice isomorphic to $\ell_\infty$ (see Theorem \ref{s-omega-max caracterizacion in terms of the cociente}). 

We show that $c_0$ is not complemented in $c_{0,\mathcal{J}}$ for any ideal $\mathcal{J}$ properly extending $\mathsf{Fin}$. This property is shared by all ideals $\mathcal{I}$ such that $\mathcal{I} \restriction A$ is Baire measurable on $2^A$ for any $A \notin \mathcal{I}$. For instance, all analytic ideals have this hereditary property.

Finally, we present several examples of ideals which are not complemented.

\section{Preliminaries}
\label{preliminares}

We will use standard terminology and notation for Banach lattices and Banach space theory. For unexplained definitions and notations, we refer to \cite{AK,schaeffer}. The scalar field is denoted by $\mathbb K$. All Banach lattices analyzed here are assumed to be real. However, our results can be extended to complex Banach lattices in the usual manner \cite[Chapter 2, p. 133]{schaeffer}. If $X$ and $Y$ are isomorphic Banach spaces, we write $X\sim Y$. If $E$ is a closed subspace of a Banach space $X$, we say that 
$E$ is complemented in $X$ if there is a continuous onto operator $P\colon X\to E$ such that $P^2=P$, or equivalently, there is a closed subspace $W$ of $X$ such that $X=E\oplus W$. In addition, if $E$ and $X$ are Banach lattices, $P$ is called \textit{positive} if $Px\geq{\bf0}$ for all $x\geq{\bf0}$. Also, a bounded linear operator $T\colon E\to X$ is called \textit{Banach lattice homomorphism} if $T(x\vee y)=Tx\vee Ty$ for each $x,y\in E$.

An ideal $\mathcal I$ on 
 a set $X$ is a collection of subsets of $X$ satisfying:
\begin{enumerate}
    \item $\emptyset\in\mathcal I$;
    \item If $A\subseteq B$ and $B\in\mathcal I$, then $A\in\mathcal I$;
    \item If $A,B\in\mathcal I$, then $A\cup B\in\mathcal I$.
\end{enumerate}

We always assume that every finite subset of $X$ belongs to $\mathcal{I}$. The dual filter of an ideal $\mathcal{I}$ is denoted by $\mathcal{I}^*$ and consists of all sets of the form $X\setminus A$ for some $A\in \mathcal{I}$. The {\em co-ideal} $\mathcal{I}^+$ is the collection $\mathcal{P}(X)\setminus \mathcal{I}$.  

When $X$ is countable, an ideal $\mathcal{I}$ can be conveniently seen as a subset of the Cantor cube $\{0,1\}^X$ with the compact metric topology. This allows us to consider when $\mathcal{I}$ is a meager subset of the Cantor cube. The following is a very useful result:

\begin{theorem}[Jalali-Naini, Talagrand \cite{jalali,talagrand}]
\label{talagrand theorem}
Let $\mathcal I$ be a proper ideal on $\mathbb N$. The following statements are equivalent:
    \begin{enumerate}
        \item $\mathcal I$ is meager.
        \item $\mathcal I$ has the Baire property.
        \item There is a partition $\{F_k\,\colon\,k\in\mathbb N\}$ of $\mathbb N$ into finite sets such that for every $M\subseteq\mathbb N$ infinite we have $\bigcup_{k\in M}F_k\not\in\mathcal I$.
    \end{enumerate}
\end{theorem}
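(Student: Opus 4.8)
The plan is to establish the three statements as a cycle, treating $(1)\Rightarrow(2)$ as immediate (every meager set has the Baire property) and then proving $(2)\Rightarrow(1)$ and the equivalence $(1)\Leftrightarrow(3)$ separately. The one structural feature I would exploit throughout is that, since $\fin\subseteq\mathcal I$ and $\mathcal I$ is an ideal, we have $A\in\mathcal I$ if and only if $A\triangle F\in\mathcal I$ for every finite $F$; thus $\mathcal I$ is invariant under the countable dense group of homeomorphisms $\sigma_F\colon A\mapsto A\triangle F$ of $\{0,1\}^\N$, which is exactly what makes a topological zero--one law available.

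For the clean direction $(3)\Rightarrow(1)$, fix a partition $\{F_k\}$ as in (3). If $A\in\mathcal I$ then $M:=\{k : F_k\subseteq A\}$ must be finite, for otherwise $\bigcup_{k\in M}F_k\subseteq A$ would be a member of $\mathcal I$ indexed by an infinite set, contradicting (3). Hence $\mathcal I\subseteq\bigcup_{N}D_N$, where $D_N=\{A : F_k\not\subseteq A\text{ for all }k\ge N\}$. Each $D_N$ is closed, being an intersection of the clopen conditions $\{A:F_k\not\subseteq A\}$, and nowhere dense: any basic clopen set constrains only finitely many coordinates, and can therefore be extended by forcing some far block $F_k$ (with $k\ge N$ and $F_k$ disjoint from those coordinates) into $A$, landing outside $D_N$. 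Thus $\mathcal I$ is meager.

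For $(2)\Rightarrow(1)$ I would invoke the topological zero--one law: a set with the Baire property that is invariant under a dense group of homeomorphisms is either meager or comeager. It then remains only to exclude the comeager case, and here I would use the complementation homeomorphism $c\colon A\mapsto\N\setminus A$, which carries $\mathcal I$ onto its dual filter $\mathcal I^*$. Since $\mathcal I$ is proper, $\mathcal I\cap\mathcal I^*=\emptyset$; were $\mathcal I$ comeager, then so would be $\mathcal I^*=c(\mathcal I)$, and two comeager subsets of a Baire space must intersect---a contradiction. Hence $\mathcal I$ is meager, giving $(1)\Leftrightarrow(2)$.

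\textbf{The main obstacle} is $(1)\Rightarrow(3)$. First note that (3) is equivalent to the assertion that every member of $\mathcal I$ contains $F_k$ for only finitely many $k$; equivalently, passing to the induced ideal $\mathcal J=\{M\subseteq\N:\bigcup_{k\in M}F_k\in\mathcal I\}$ on the index set, condition (3) says precisely that $\mathcal J=\fin$. So the real task is to choose the partition so that the block-constant sequences meet $\mathcal I$ only in finite unions. Starting from an increasing cover $\mathcal I\subseteq\bigcup_m C_m$ by closed nowhere dense sets, I would build the intervals $F_k$ by recursion, at stage $k$ using the nowhere density of $C_k$ to make the next interval large enough that activating it forces an escape. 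The delicate point---which I expect to consume most of the work---is that the unions $\bigcup_{k\in M}F_k$ are \emph{constant} on each block, whereas nowhere density only supplies escapes along arbitrary finite patterns, so one cannot simply read off the partition from the cover. Here the hypothesis $\fin\subseteq\mathcal I$ is essential: testing the cover against $\emptyset\in\mathcal I$ forces the all-$0$ pattern to occur on only finitely many intervals, and after grouping consecutive intervals a fusion argument can arrange cofinitely many blocks to be contained in no member of $\mathcal I$, which rules out an adversarial infinite $M$ with $\bigcup_{k\in M}F_k\in\mathcal I$. Organizing this recursion so that \emph{every} infinite index set escapes $\mathcal I$ simultaneously is the crux, and once it is done the cycle closes.
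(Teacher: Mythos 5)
The paper does not actually prove this theorem---it is quoted as a known result of Jalali-Naini and Talagrand---so your proposal can only be judged on its own merits. Three of your four implications are correct and complete: $(1)\Rightarrow(2)$ is trivial; your $(3)\Rightarrow(1)$ argument (covering $\mathcal I$ by the closed nowhere dense sets $D_N=\{A : F_k\not\subseteq A \text{ for all } k\geq N\}$) is right; and your $(2)\Rightarrow(1)$ via the topological zero--one law is the standard Kechris-style argument, with both needed ingredients correctly supplied (invariance of $\mathcal I$ under finite symmetric differences, which uses $\fin\subseteq\mathcal I$, and exclusion of the comeager case via $A\mapsto\N\setminus A$ and $\mathcal I\cap\mathcal I^*=\emptyset$).

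The genuine gap is $(1)\Rightarrow(3)$, which you explicitly leave at its crux. You correctly diagnose the obstacle (nowhere density supplies escapes along \emph{arbitrary} finite patterns, while membership of $\bigcup_{k\in M}F_k$ imposes the constant all-ones pattern on each block), but the resolution you gesture at is off track: the phrase ``cofinitely many blocks contained in no member of $\mathcal I$'' cannot be made sense of, since every finite block $F_k$ is itself a member of $\mathcal I$ (as $\fin\subseteq\mathcal I$), and no ``fusion'' or zero-one-law argument substitutes for the missing step. The missing idea is to exploit the downward closure of $\mathcal I$ to convert constant patterns into arbitrary ones. Concretely: writing $\mathcal I\subseteq\bigcup_k C_k$ with $C_k$ increasing closed nowhere dense, choose $n_{k+1}>n_k$ recursively so that for \emph{every} $s\subseteq[0,n_k)$ there is an escape pattern $t_s\subseteq[n_k,n_{k+1})$ with the basic clopen set $\{A : A\cap[0,n_{k+1})=s\cup t_s\}$ disjoint from $C_k$ (possible because there are only finitely many $s$ and $C_k$ is closed nowhere dense). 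Set $F_k=[n_k,n_{k+1})$. Now suppose $X\in\mathcal I$ contains $F_k$ for all $k$ in an infinite set $M$. Define $Y\subseteq X$ recursively by $Y\cap F_k=t_{s_k}$ for $k\in M$, where $s_k=Y\cap[0,n_k)$, and $Y\cap F_k=\emptyset$ for $k\notin M$; this is legitimate precisely because $X\supseteq F_k$ for $k\in M$, so any pattern on such a block is a subset of $X\cap F_k$. Then $Y\subseteq X$ gives $Y\in\mathcal I$, yet $Y\cap[0,n_{k+1})=s_k\cup t_{s_k}$ gives $Y\notin C_k$ for every $k\in M$; since the $C_k$ are increasing and $M$ is infinite, $Y\notin\bigcup_k C_k\supseteq\mathcal I$, a contradiction. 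With this insertion your cycle closes and the proof is complete.
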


An ideal $\ideal$ is {\em maximal} if $\mathcal P(X)$ is the only ideal properly  extending  $\ideal$; equivalently, if $\ideal^*$ is an ultrafilter. Notice that $\ideal$ is maximal if $\ideal^*=\ideal^+$. For $A\subseteq X$, we denote the restriction of $\ideal$ to $A$ by $\mathcal I\restriction A=\{A\cap B\,\colon B\in\mathcal I\}$ which is an ideal on $A$. Let  $\mathcal A$ and $\mathcal B$  families of sets, we denote by $\mathcal A\sqcup\mathcal B$ the collection $\{A\cup B\,\colon\,A\in\mathcal A,B\in\mathcal B\}.$
An $\mathcal I$-AD family is a collection $\mathcal{A}\subseteq \ideal^+$ such that $A\cap B\in \ideal$ for every two different sets $A, B\in \mathcal{A}$.  Two ideals $\ideal$ and $\idealj$ on $X$ and $Y$, respectively, are {\em isomorphic}, if there is a bijection $f:X\to Y$ such that $f[E]\in \idealj$ for all $E\in \ideal$. 

The  following  observations  will be needed in the sequel, its proof is straightforward.

\begin{lemma}
\label{condicion equivalente a maximalidad}
Let $\mathcal I$ be an ideal on $\mathbb N$ and $A\in\mathcal I^+$. 

\begin{enumerate}
\item If $\mathcal I\restriction A$ is a maximal ideal on $A$, then $\mathcal J=\mathcal I\restriction A\sqcup\mathcal P(A^c)$ is maximal ideal on $\mathbb N$.

\item  If $\mathcal I$ is maximal and $A\in\mathcal I^+$, then $\mathcal I=\mathcal I\restriction A\sqcup\mathcal P(A^c)$.

\item  The following assertions are equivalent:

\begin{enumerate}
\item $\mathcal I\restriction A$ is maximal on $A$;

\item Let $\mathcal J_A=\mathcal I\sqcup\mathcal P(A)$. Then $\mathcal I\restriction B$ is maximal on $B$ for all $B\in\mathcal J_A\setminus\mathcal I$.
\end{enumerate}
\end{enumerate}
\end{lemma}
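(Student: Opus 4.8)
The plan is to reduce all three parts to the standard complement-characterization of maximality: a proper ideal $\mathcal{K}$ on a set $Y$ is maximal exactly when, for every $S\subseteq Y$, one has $S\in\mathcal{K}$ or $Y\setminus S\in\mathcal{K}$ (equivalently $\mathcal{K}^+\subseteq\mathcal{K}^*$). The only real preparation is to rewrite the two $\sqcup$-constructions as conditions on traces. First I would record two elementary identities: for $\mathcal{J}=\mathcal{I}\restriction A\sqcup\mathcal{P}(A^c)$ one has $S\in\mathcal{J}\iff S\cap A\in\mathcal{I}$, and for $\mathcal{J}_A=\mathcal{I}\sqcup\mathcal{P}(A)$ one has $S\in\mathcal{J}_A\iff S\setminus A\in\mathcal{I}$. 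Both follow directly from the definition of $\sqcup$ together with the observation that, for $D\subseteq A$, membership $D\in\mathcal{I}\restriction A$ is the same as $D\in\mathcal{I}$ (since $\mathcal{I}$ is closed under subsets). These identities simultaneously show that $\mathcal{J}$ and $\mathcal{J}_A$ are ideals, proper precisely because $A\in\mathcal{I}^+$.

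With the first identity, part (1) is immediate: given $S\subseteq\mathbb{N}$, apply maximality of $\mathcal{I}\restriction A$ to $S\cap A\subseteq A$. Either $S\cap A\in\mathcal{I}$, so $S\in\mathcal{J}$; or $A\setminus S\in\mathcal{I}$, so $(\mathbb{N}\setminus S)\cap A\in\mathcal{I}$ and hence $\mathbb{N}\setminus S\in\mathcal{J}$. Since $\mathbb{N}\cap A=A\notin\mathcal{I}$ makes $\mathcal{J}$ proper, this is maximality. For part (2), the inclusion $\mathcal{I}\subseteq\mathcal{J}$ holds for any ideal, as $S\in\mathcal{I}$ forces $S\cap A\in\mathcal{I}$. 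Conversely, if $S\in\mathcal{J}$ then $S\cap A\in\mathcal{I}$, and maximality of $\mathcal{I}$ with $A\in\mathcal{I}^+$ gives $A^c\in\mathcal{I}$; thus $S\setminus A\subseteq A^c$ lies in $\mathcal{I}$ and $S=(S\cap A)\cup(S\setminus A)\in\mathcal{I}$.

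For part (3), the direction (b)$\Rightarrow$(a) is trivial: $A\setminus A=\emptyset\in\mathcal{I}$ and $A\notin\mathcal{I}$ give $A\in\mathcal{J}_A\setminus\mathcal{I}$, so $B=A$ in (b) yields (a). For (a)$\Rightarrow$(b), fix $B\in\mathcal{J}_A\setminus\mathcal{I}$, which by the second identity means $B\setminus A\in\mathcal{I}$ while $B\notin\mathcal{I}$. Given $D\subseteq B$, the part $D\setminus A\subseteq B\setminus A$ automatically lies in $\mathcal{I}$, so I would apply maximality of $\mathcal{I}\restriction A$ to $D\cap A$: if $D\cap A\in\mathcal{I}$ then $D\in\mathcal{I}$, while if $A\setminus D\in\mathcal{I}$ then both $B\cap(A\setminus D)$ and $(B\setminus A)\cap D^c$ lie in $\mathcal{I}$, so $B\setminus D\in\mathcal{I}$; either way $\mathcal{I}\restriction B$ is maximal on $B$.

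I do not expect a genuine obstacle; the statement is essentially bookkeeping once the right formulation is chosen. The main point to get right is the two trace identities and the care needed in distinguishing the restricted ideal $\mathcal{I}\restriction A$ (as an ideal on $A$) from plain membership in $\mathcal{I}$, which is exactly what those identities isolate, so that every verification collapses to intersecting or subtracting $A$.
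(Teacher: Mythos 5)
Your proof is correct. The paper gives no argument for this lemma (it is stated as ``straightforward''), and your two trace identities $S\in\mathcal J\iff S\cap A\in\mathcal I$ and $S\in\mathcal J_A\iff S\setminus A\in\mathcal I$ are precisely the bookkeeping that reduces all three parts to the complement characterization of maximality, so your write-up is a faithful filling-in of the intended routine verification. One inessential slip: properness of $\mathcal J_A=\mathcal I\sqcup\mathcal P(A)$ is equivalent to $A^c\notin\mathcal I$, not to $A\in\mathcal I^+$ (for instance, if $\mathcal I$ is maximal and $A\in\mathcal I^*$, then $\mathcal J_A=\mathcal P(\mathbb N)$); this does not affect anything, since your argument for part (3) never uses properness of $\mathcal J_A$, only properness of $\mathcal I\restriction B$, which follows from $B\notin\mathcal I$.
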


Let $\{K_n:\; n\in F\}$ be a partition of a countable set $X$, where $F\subseteq \N$. For $n\in F$, let $\ideal_n$ be an ideal on $K_n$. The direct sum, denoted by $\bigoplus\limits_{n\in F} \ideal_n$, is defined as follows:
\[
A\in \bigoplus_{n\in F} \ideal_n \Leftrightarrow (\forall n\in F)(A\cap K_n \in \ideal_n).
\]
Notice that the direct sum $\bigoplus_n \ideal_n$ can also be  naturally  defined in $\N\times\N$.  When $\mathcal I_n$ is isomorphic to $\mathcal I$  for all $n$, the direct sum is denoted by $\mathcal I^\omega$.

If  $\ideal$ is a maximal ideal on $\N$ and $K\not\in \ideal$, it is easy to see that $A\in \ideal$ if and only if $A\cap K\in \ideal$. From this, we have the following observation that will be used later on. 

\begin{lemma}
\label{suma-directa}
Let $\{\mathcal I_n\,\colon\,n\in F\}$ be a countable collection of maximal ideals on $\N$ and  $\{K_n\,\colon\,n\in F\}$ be a family of pairwise disjoint subsets of $\N$ such that $K_n \not\in\mathcal I_n$ for each $n\in F$. Then, $A\in \bigcap_{n\in F}\ideal_n$ if and only if $A\cap K_n\in\mathcal I_n$ for every $n\in F$. In particular, if 
$\{K_n\,\colon\,n\in F\}$ is a partition of $\mathbb N$ such that $K_n \not\in\mathcal I_n$ for each $n\in F$, then
\[
\bigoplus_{n\in F} (\ideal_n\restriction K_n) \,=\, \bigcap_{n\in F}\ideal_n.
\]
\end{lemma}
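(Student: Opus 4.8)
The plan is to reduce everything to the single observation recorded just above the lemma: for a maximal ideal $\mathcal{I}$ on $\mathbb{N}$ and any $K \notin \mathcal{I}$, one has $A \in \mathcal{I}$ if and only if $A \cap K \in \mathcal{I}$. I would first write out the short proof of this fact, since the remainder is essentially bookkeeping. The forward implication is just downward closure of $\mathcal{I}$ (as $A \cap K \subseteq A$). For the converse, suppose $A \cap K \in \mathcal{I}$ but $A \notin \mathcal{I}$; by maximality the co-ideal $\mathcal{I}^+$ coincides with the dual filter $\mathcal{I}^*$, so both $A$ and $K$ lie in the ultrafilter $\mathcal{I}^*$, whence $A \cap K \in \mathcal{I}^*$. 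Since $\mathcal{I}$ is proper, $\mathcal{I} \cap \mathcal{I}^* = \emptyset$, contradicting $A \cap K \in \mathcal{I}$. This forces $A \in \mathcal{I}$.

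With this in hand the main biconditional is immediate. If $A \in \bigcap_{n \in F} \mathcal{I}_n$, then $A \in \mathcal{I}_n$ for each $n$, and $A \cap K_n \subseteq A$ gives $A \cap K_n \in \mathcal{I}_n$ by downward closure. Conversely, if $A \cap K_n \in \mathcal{I}_n$ for every $n \in F$, then for each fixed $n$ I apply the observation to the maximal ideal $\mathcal{I}_n$ and the set $K_n$ (which lies in $\mathcal{I}_n^+$ by hypothesis) to conclude $A \in \mathcal{I}_n$; as this holds for all $n$, we obtain $A \in \bigcap_n \mathcal{I}_n$. It is worth flagging that the pairwise disjointness of the $K_n$ plays no role in this equivalence: only the condition $K_n \notin \mathcal{I}_n$ is used.

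For the ``in particular'' statement, assume $\{K_n : n \in F\}$ partitions $\mathbb{N}$. I would unwind the definition of the direct sum: $A \in \bigoplus_{n \in F} (\mathcal{I}_n \restriction K_n)$ means $A \cap K_n \in \mathcal{I}_n \restriction K_n$ for every $n$. Because $A \cap K_n \subseteq K_n$, membership in the restriction $\mathcal{I}_n \restriction K_n = \{K_n \cap B : B \in \mathcal{I}_n\}$ is equivalent to $A \cap K_n \in \mathcal{I}_n$ — one direction since $K_n \cap B \subseteq B \in \mathcal{I}_n$, the other since $A \cap K_n$ itself witnesses membership in the restriction. Hence the direct-sum condition is precisely ``$A \cap K_n \in \mathcal{I}_n$ for all $n$'', which by the first part is equivalent to $A \in \bigcap_n \mathcal{I}_n$, yielding the claimed equality of ideals.

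I do not anticipate a genuine obstacle: the content is a routine application of ultrafilter maximality. The only points requiring a little care are the identification of membership in the restricted ideal $\mathcal{I}_n \restriction K_n$ with membership in $\mathcal{I}_n$ for subsets of $K_n$, and the invocation of properness to guarantee $\mathcal{I}_n \cap \mathcal{I}_n^* = \emptyset$ in the key observation.
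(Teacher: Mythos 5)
Your proof is correct and follows exactly the route the paper intends: the lemma is stated there without proof as an immediate consequence of the observation that for a maximal ideal $\mathcal{I}$ and $K\notin\mathcal{I}$ one has $A\in\mathcal{I}$ iff $A\cap K\in\mathcal{I}$, which is precisely the fact you prove and then apply coordinatewise. Your additional remark that pairwise disjointness of the $K_n$ is not needed for the biconditional (only for the partition/direct-sum reformulation) is also accurate.
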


Recall that $\beta\N$ is the Stone-\v{C}ech compactification of $\N$ which  is usually identified with the collection of all ultrafilters on $\N$. For a set $A\subseteq\mathbb N$, we let $A^*=\{p\in\beta\mathbb N\,\colon\,A\in p\}$.  The family $\{A^*\,\colon\,A\subseteq\mathbb N\}$ defines a basis for the topology of $\beta\mathbb N$. As usual, we identify each $n\in \N$ with the principal ultrafilter $\{A\subseteq\N: n\in A\}$. Notice that every principal ultrafilter is an isolated point of $\beta\N$.

If ${\bf x}=(x_n)\in\ell_\infty$ and $\varepsilon>0$, we will use throughout the whole paper the following notation:
\begin{gather*}
    A(\varepsilon,{\bf x})=\{n\in\mathbb N\,\colon\,|x_n|\geq\varepsilon\}.
\end{gather*}
Notice that ${\bf x}\in c_{0,\mathcal{I}}$ if and only if $A(\varepsilon,{\bf x})\in \ideal$ for all $\varepsilon>0$. 

\section{\texorpdfstring{$\omega$}{omega}-maximal ideals}
\label{kappa-max}

Every ideal on $\N$ is easily seen to be equal to an  intersection of a collection of $2^{\aleph_0}$ many  maximal ideals.  Therefore,  an ideal $\ideal$ on $\N$  is called {\em $\kappa$-maximal}, for $\kappa$ a cardinal, if $\ideal=\bigcap_{\alpha<\kappa}\ideal_\alpha$ for some  maximal ideals $\ideal_\alpha$, for $\alpha<\kappa$ and $\kappa$  has the smallest possible value. Clearly, the interesting cases  are the  $\kappa$-maximal ideals with  $\kappa<2^{\aleph_0}$.

Since we are always assuming  that $\fin$ is contained in any ideal under consideration, every maximal ideal extending a given ideal is necessarily non-principal.  It is worth keeping in mind that the intersection of less than $2^{\aleph_0}$ maximal ideals on $\N$ does not have the Baire property, and hence it is non-meager (\cite{plewik}, \cite[Proposition 23]{talagrand}). Our main interest will be on $\omega$-maximal ideals since they are related to the  complementation of $c_{0,\mathcal I}$ in $\ell_\infty$.

We say that a maximal ideal $\idealj$ is a {\em limit point} of a set $\mathfrak D$ of maximal ideals, if $\idealj^*$ is a limit point of $\mathfrak{D}^*\coloneqq\{\ideal^*:\ideal \in \mathfrak{D}\}$ in $\beta\N$.  We say that a countable collection of maximal ideals $\{\ideal_n: n\in \N\}$ is {\em discrete} if  $\{\ideal^*_n: n\in \N\}$ is a discrete subset of $\beta\N$. An ideal $\ideal$ is {\em strongly $\omega$-maximal} if  there is a discrete collection $\{\mathcal{I}_n:n\in \N\}$  of maximal ideals on $\N$ such that $\ideal=\bigcap_n \mathcal{I}_n$. In this section, all discrete collections of maximal ideals are assumed to be infinite.

Now, we present a useful characterization of strongly $\omega$-maximal ideals. Since every finite subset of $\beta\mathbb{N}$ is discrete, the following result also applies to $k$-maximal ideals for any positive integer $k$, a case that was already shown by A. Mill\'an \cite{millan}  and Bergman  \cite{Bergman2014}. Mill\'an's  work was particularly helpful in understanding certain properties of $\omega$-maximal ideals.

\begin{proposition}
\label{particiones2}
Let $\{\ideal_k\,\colon\,k\in \mathbb N\}$ be a collection of maximal ideals on $\N$. Then, $\{\ideal_k\,\colon\,k\in \mathbb N\}$ is discrete if and only if there is a partition $\{A_k\,\colon\,k\in\mathbb N\}$ of $\N$ such that $A_k\in \ideal^*_k\cap(\bigcap_{j\in\mathbb N\setminus\{k\}}\mathcal I_j)$ for all $k\in\mathbb N$. 
\end{proposition}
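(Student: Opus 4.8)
The plan is to translate the three conditions on the $A_k$ into topological statements about the clopen basis $\{A^*\colon A\subseteq\N\}$ of $\beta\N$ and then exploit maximality of the $\mathcal{I}_j$. Recall that $A\in\mathcal{I}_k^*$ means $\mathcal{I}_k^*\in A^*$, while, since $\mathcal{I}_j$ is maximal, $A\in\mathcal{I}_j$ iff $A\notin\mathcal{I}_j^*$ iff $\mathcal{I}_j^*\notin A^*$. Hence the requested property ``$A_k\in\mathcal{I}_k^*\cap\bigcap_{j\neq k}\mathcal{I}_j$'' says precisely that the basic clopen set $A_k^*$ is a neighborhood of $\mathcal{I}_k^*$ containing none of the other $\mathcal{I}_j^*$. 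This immediately disposes of the reverse implication: if such a partition exists, then for each $k$ the set $A_k^*$ witnesses that $\mathcal{I}_k^*$ is isolated in $\{\mathcal{I}_j^*\colon j\in\N\}$, so the collection is discrete. (Note that disjointness of the $A_k$ is not even needed here, only the membership conditions.)

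For the forward direction, assume $\{\mathcal{I}_k^*\colon k\in\N\}$ is discrete. By definition each $\mathcal{I}_k^*$ has a neighborhood meeting the set only in itself; refining to the clopen basis yields sets $B_k\subseteq\N$ with $B_k\in\mathcal{I}_k^*$ and $B_k\in\mathcal{I}_j$ (equivalently $B_k\notin\mathcal{I}_j^*$) for all $j\neq k$. These $B_k$ separate the points but need be neither disjoint nor covering, so the real work is to manufacture an honest partition. I would disjointify by setting $D_k=B_k\setminus\bigcup_{j<k}B_j$. The key point is that, for fixed $k$, each $B_j$ with $j<k$ lies outside the ultrafilter $\mathcal{I}_k^*$, so $\bigcup_{j<k}B_j\notin\mathcal{I}_k^*$ (a finite union of non-members of an ultrafilter is a non-member, since the intersection of their complements lies in the ultrafilter). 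Thus the complement of $\bigcup_{j<k}B_j$ belongs to $\mathcal{I}_k^*$ and $D_k=B_k\cap(\bigcup_{j<k}B_j)^c\in\mathcal{I}_k^*$. By construction $D_k\cap B_m=\emptyset$ for $m<k$, so the $D_k$ are pairwise disjoint while still $D_k\in\mathcal{I}_k^*$.

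It then remains to absorb the leftover $R=\N\setminus\bigcup_k D_k$. Since filters are upward closed, enlarging a piece keeps it in its ultrafilter, so I set $A_0=D_0\cup R$ and $A_k=D_k$ for $k\geq1$; this is a partition of $\N$ with $A_k\in\mathcal{I}_k^*$ for every $k$. Finally, disjointness upgrades ``$A_k\in\mathcal{I}_k^*$'' to the full conclusion for free: if $j\neq k$ then $A_k\cap A_j=\emptyset$ together with $A_j\in\mathcal{I}_j^*$ forces $A_k\notin\mathcal{I}_j^*$, i.e.\ $A_k\in\mathcal{I}_j$ by maximality.

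The only genuinely delicate step is the disjointification, where one must check that subtracting the earlier $B_j$ does not evict $B_k$ from its ultrafilter; this is exactly where finiteness of $\{j<k\}$ and the separation property $B_j\notin\mathcal{I}_k^*$ are used. Everything else—the reverse direction, covering $\N$ via the remainder $R$, and recovering the ``$j\neq k$'' clauses—is automatic from the upward closure of filters and the maximality of the $\mathcal{I}_j$.
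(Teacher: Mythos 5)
Your proof is correct and follows essentially the same route as the paper: extract separating sets $B_k$ from discreteness, disjointify by subtracting earlier $B_j$'s (checking via the finite-union/complement argument that each piece stays in its ultrafilter), and absorb the leftover into one block. The only cosmetic differences are that you phrase the key computation dually through the filter $\mathcal I_k^*$ rather than the ideal $\mathcal I_k$, and you recover the conditions $A_k\in\mathcal I_j$ ($j\neq k$) from disjointness plus maximality instead of from $A_k\subseteq B_k\in\mathcal I_j$.
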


\begin{proof}
Suppose there is a partition $\{A_k\,\colon\,k\in\mathbb N\}$ of $\N$ such that $A_k\in \ideal^*_k\cap(\bigcap_{j\in\mathbb N\setminus\{k\}}\mathcal I_j)$ for all $k\in\mathbb N$. Then $\{\ideal_j^*\,\colon\,j\in\mathbb N\}\cap A_k^*=\{\mathcal I_k^*\}$ for all $k\in\mathbb N$, thus $\{\ideal_k\,\colon\,k\in\mathbb N\}$ is discrete.

 Now suppose that $\{\ideal_{j}\,\colon\,j\in\mathbb N\}$ is discrete. For each $k\in\mathbb N$, there exists $B_k\subseteq\mathbb N$ such that $\{\ideal_{j}^*\,\colon\,j\in\mathbb N\}\cap B_k^*=\{\mathcal I_{k}^*\}$, that is, $B_k\in\mathcal I_{k}^*\cap\mathcal I_{j}$ for all $j\in\mathbb N$ with $j\neq k$. Set $A_1=B_1$ and $A_j=B_j\setminus\bigcup_{1\leq i<j}B_i$ for all $j\geq2$. Observe that if $k\in\mathbb N$, then $A_k\in\mathcal I_{j}$ for each  $j\in\mathbb N$ with $j\neq k$. We claim that $A_k\in\mathcal I_{k}^*$ for all $k\in\mathbb N$. Indeed, if $k\in\mathbb N$ is given, we have $B_k=A_k\cup(\bigcup_{1\leq i<k}(B_i\cap B_k))$. Notice that $B_i\cap B_k\in\mathcal I_{k}$ for all $1\leq i<k$. Thus, $\bigcup_{1\leq i<k}(B_i\cap B_k)\in\mathcal I_{k}$, and therefore $A_k\in\mathcal I_{k}^*$.

Finally, since $E=\mathbb N\setminus\bigcup_{j\in\mathbb N} A_j\in\mathcal I_{k}$ for all $k\in\mathbb N$,   by substituting $B_1$ by $A_1=B_1\cup E$, we obtain that $\{A_k\,\colon\,k\in\mathbb N\}$ is 
a partition of $\N$ such that $A_k\in \ideal^*_{k}\cap(\bigcap_{j\in\mathbb N\setminus\{k\}}\mathcal I_{j})$ for each $k\in\mathbb N$.
\end{proof}

The following observation is  crucial for  what follows.  Part (1)  turned out to be known \cite[Theorem 3.20]{hindman-strauss} but we include a proof for sake of completeness.

\begin{lemma}
\label{sobre puntos limites 1}
Let $\mathfrak D$ be a collection of maximal ideals and  $\mathcal I$ be  a maximal ideal.  Then 

\begin{enumerate}
\item $\mathcal I^*\in\overline{\mathfrak D^*}$ if and only if $\bigcap\mathfrak D\subseteq\mathcal I$. 

\item $\mathcal I$ is a limit point of $\mathfrak{D}$ if and only if $\bigcap\mathfrak{D}=\bigcap(\mathfrak{D}\setminus\{\mathcal I\})$. Consequently, $\mathfrak{D}$ is discrete if and only if  $\bigcap(\mathfrak{D}\setminus\{\mathcal K\})\not\subseteq\mathcal K$ for each $\mathcal K\in\mathfrak{D}$.
\end{enumerate}
\end{lemma}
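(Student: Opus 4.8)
The plan is to work entirely with the clopen basis $\{A^*\colon A\subseteq\mathbb N\}$ of $\beta\mathbb N$ and to translate membership of a set in an ideal into membership of a point of $\beta\mathbb N$ in a basic clopen set. The single computational fact I rely on is that, since every $\mathcal J\in\mathfrak D$ is maximal, $\mathcal J^*=\mathcal J^+$, so for any $A\subseteq\mathbb N$ one has $A\in\mathcal J\iff A\notin\mathcal J^*\iff \mathcal J^*\notin A^*$; likewise, for the maximal ideal $\mathcal I$, $A\notin\mathcal I\iff\mathcal I^*\in A^*$. These two dictionaries are what carry the whole argument.

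For part (1) I argue by contrapositive. A point $\mathcal I^*$ fails to lie in $\overline{\mathfrak D^*}$ exactly when some basic clopen neighborhood $A^*$ of $\mathcal I^*$ misses $\mathfrak D^*$; that is, when there is $A\subseteq\mathbb N$ with $\mathcal I^*\in A^*$ (equivalently $A\notin\mathcal I$) and $\mathcal J^*\notin A^*$ for every $\mathcal J\in\mathfrak D$ (equivalently $A\in\mathcal J$ for every $\mathcal J\in\mathfrak D$, i.e.\ $A\in\bigcap\mathfrak D$). Such an $A$ exists precisely when $\bigcap\mathfrak D\not\subseteq\mathcal I$. Negating both sides yields $\mathcal I^*\in\overline{\mathfrak D^*}\iff\bigcap\mathfrak D\subseteq\mathcal I$, which is (1).

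For part (2) I apply (1) to the family $\mathfrak D\setminus\{\mathcal I\}$. Because the duality $\mathcal J\mapsto\mathcal J^*$ is injective on maximal ideals, $(\mathfrak D\setminus\{\mathcal I\})^*=\mathfrak D^*\setminus\{\mathcal I^*\}$, so $\mathcal I^*\in\overline{(\mathfrak D\setminus\{\mathcal I\})^*}$ is exactly the statement that $\mathcal I^*$ is a limit point of $\mathfrak D^*$, i.e.\ that $\mathcal I$ is a limit point of $\mathfrak D$. Thus (1) gives: $\mathcal I$ is a limit point of $\mathfrak D$ iff $\bigcap(\mathfrak D\setminus\{\mathcal I\})\subseteq\mathcal I$. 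It remains to convert this inclusion into the displayed equality. Since $\bigcap\mathfrak D=\mathcal I\cap\bigcap(\mathfrak D\setminus\{\mathcal I\})$ and always $\bigcap\mathfrak D\subseteq\bigcap(\mathfrak D\setminus\{\mathcal I\})$, the equality $\bigcap\mathfrak D=\bigcap(\mathfrak D\setminus\{\mathcal I\})$ holds iff $\bigcap(\mathfrak D\setminus\{\mathcal I\})\subseteq\mathcal I$, establishing the first assertion of (2).

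For the consequence, discreteness of $\mathfrak D^*$ means that no point of $\mathfrak D^*$ is a limit point of $\mathfrak D^*$; i.e.\ no $\mathcal K\in\mathfrak D$ is a limit point of $\mathfrak D$. By the first part of (2), $\mathcal K$ is not a limit point exactly when $\bigcap\mathfrak D\neq\bigcap(\mathfrak D\setminus\{\mathcal K\})$, which (again using $\bigcap\mathfrak D\subseteq\bigcap(\mathfrak D\setminus\{\mathcal K\})$) amounts to $\bigcap(\mathfrak D\setminus\{\mathcal K\})\not\subseteq\mathcal K$; quantifying over $\mathcal K\in\mathfrak D$ gives the stated characterization. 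I expect no genuine obstacle here: the content is all in part (1), and the remaining care is bookkeeping, namely using maximality to move freely between $\mathcal J$, $\mathcal J^+$ and $\mathcal J^*$, and using injectivity of $\mathcal J\mapsto\mathcal J^*$ so that deleting $\mathcal I$ from $\mathfrak D$ corresponds to deleting exactly the point $\mathcal I^*$ from $\mathfrak D^*$.
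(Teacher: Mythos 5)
Your proof is correct and takes essentially the same route as the paper: part (1) by contrapositive, translating $\mathcal I^*\notin\overline{\mathfrak D^*}$ into the existence of a basic clopen neighborhood $A^*$ missing $\mathfrak D^*$ and using maximality ($\mathcal J^*=\mathcal J^+$) to turn this into the existence of $A\in\bigcap\mathfrak D$ with $A\notin\mathcal I$; part (2) as a consequence of (1). The paper's proof of (2) is literally ``It follows from (1)'', so your explicit deduction (applying (1) to $\mathfrak D\setminus\{\mathcal I\}$ and using $\bigcap\mathfrak D=\mathcal I\cap\bigcap(\mathfrak D\setminus\{\mathcal I\})$ for $\mathcal I\in\mathfrak D$) just spells out what the paper leaves implicit.
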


\begin{proof}
\begin{enumerate}
\item $\mathcal I^*\not\in\overline{\mathfrak D^*}$ if and only if there is $A\subseteq \N$ such that $A\in \ideal^*$ and $A^*\cap \mathfrak D^*=\emptyset$ if and only if
there is $A\subseteq \N$ such that $A^c\not\in \ideal^*$ and $A^c\in \idealj^*$ for all $ \idealj^*\in \mathfrak D^*$ if and only if $\bigcap\mathfrak D\not\subseteq\mathcal I$.

\item  It follows from (1).\qedhere
\end{enumerate} 
\end{proof}

For each collection of maximal ideals  $\mathfrak{E}=\{\mathcal I_n\,\colon\,n\in\mathbb N\}$ we define a family of infinite subsets of $\N$ which will be very helpful for what follows:
\begin{gather*}
    \mathcal{DI}(\mathfrak{E})=\{M\in[\mathbb N]^\omega\,\colon\,\mbox{$\{\mathcal I_n\,\colon\,n\in M\}$ is discrete}\}.
\end{gather*}
For each $M\subseteq \N$, we  let  $\mathcal I_M=\bigcap_{n\in M}\mathcal I_n$.

\begin{proposition}
\label{strongmax1}
Let $\mathfrak{E}=\{\mathcal I_n\,\colon\,n\in\mathbb N\}$ be a collection of maximal ideals on $\N$ and $A,M\subseteq \N$. Then 

\begin{enumerate}
    \item  $\{\mathcal I_n^*\,\colon\,n\in A\}\subseteq\overline{\{\mathcal I_m^*\,\colon\,m\in M\}}$ if and only if $\mathcal I_M=\mathcal I_{M\cup A}$. 
    
    \item Let $M\in\mathcal{DI}(\mathfrak{E})$ and $n\in \N$.  Then, $\mathcal I_M\not\subseteq\mathcal I_n$ if and only if $M\cup\{n\}\in\mathcal{DI}(\mathfrak{E})$. 
    
\item Let $M\in\mathcal{DI}(\mathfrak{E})$. Then, $M$ is maximal in $\mathcal{DI}(\mathfrak{E})$  if and only if $\mathcal I_{\mathbb N}=\mathcal I_M$. 
    
\item Let $M\subseteq\mathbb N$ and $D(M)=\{m\in M\,\colon\,\bigcap_{n\in M\setminus\{m\}}\mathcal I_n\not\subseteq\mathcal I_m\}$. Then $D(M)\in\mathcal{DI}(\mathfrak{E})$.
\end{enumerate}
\end{proposition}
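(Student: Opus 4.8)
The whole proposition rests on the dictionary provided by Lemma~\ref{sobre puntos limites 1}, which converts topological statements about $\overline{\mathfrak D^*}$ into inclusions between intersections of the ideals. Throughout I would write $\mathfrak D_M=\{\mathcal I_m\,\colon\,m\in M\}$, so that $\mathcal I_M=\bigcap\mathfrak D_M$, and keep in mind that enlarging the index set shrinks the intersection, i.e. $M\subseteq M'$ implies $\mathcal I_{M'}\subseteq\mathcal I_M$, together with $\mathcal I_{M\cup A}=\mathcal I_M\cap\mathcal I_A$. For (1) I would apply Lemma~\ref{sobre puntos limites 1}(1) pointwise: $\mathcal I_n^*\in\overline{\mathfrak D_M^*}$ holds exactly when $\mathcal I_M\subseteq\mathcal I_n$, so the inclusion $\{\mathcal I_n^*\,\colon\,n\in A\}\subseteq\overline{\mathfrak D_M^*}$ is equivalent to $\mathcal I_M\subseteq\mathcal I_n$ for every $n\in A$, i.e. to $\mathcal I_M\subseteq\mathcal I_A$. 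Since $\mathcal I_{M\cup A}=\mathcal I_M\cap\mathcal I_A\subseteq\mathcal I_M$ always, this last inclusion holds precisely when $\mathcal I_M=\mathcal I_{M\cup A}$, which is (1).

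For (2), where one should assume $n\notin M$ (and, as is implicit, that the listed ideals are pairwise distinct), I would argue topologically using (1). Taking $A=\{n\}$ gives $\mathcal I_M\not\subseteq\mathcal I_n$ iff $\mathcal I_n^*\notin\overline{\mathfrak D_M^*}$. If this holds, then $\mathcal I_n^*$ has a basic neighbourhood disjoint from $\mathfrak D_M^*$, so it is isolated in $\mathfrak D_{M\cup\{n\}}^*$; and since $\beta\mathbb N$ is Hausdorff, each point of the already-discrete set $\mathfrak D_M^*$ can be separated from $\mathcal I_n^*$ and therefore remains isolated, giving $M\cup\{n\}\in\mathcal{DI}(\mathfrak E)$. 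Conversely, if $M\cup\{n\}$ is discrete then $\mathcal I_n^*$ is isolated and distinct from every point of $\mathfrak D_M^*$, whence $\mathcal I_n^*\notin\overline{\mathfrak D_M^*}$ and $\mathcal I_M\not\subseteq\mathcal I_n$. Part (3) then drops out: since a subset of a discrete set is discrete, $M$ fails to be maximal in $\mathcal{DI}(\mathfrak E)$ iff some one-point extension $M\cup\{n\}$ with $n\notin M$ lies in $\mathcal{DI}(\mathfrak E)$, which by (2) happens iff $\mathcal I_M\not\subseteq\mathcal I_n$ for some $n$. Thus $M$ is maximal iff $\mathcal I_M\subseteq\mathcal I_n$ for all $n$, i.e. $\mathcal I_M\subseteq\mathcal I_{\mathbb N}$; and as $\mathcal I_{\mathbb N}\subseteq\mathcal I_M$ always, this is exactly $\mathcal I_M=\mathcal I_{\mathbb N}$.

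For (4) the cleanest route is Lemma~\ref{sobre puntos limites 1}(2): to prove that $\{\mathcal I_m\,\colon\,m\in D(M)\}$ is discrete it suffices to verify, for each $k\in D(M)$, that $\bigcap_{m\in D(M)\setminus\{k\}}\mathcal I_m\not\subseteq\mathcal I_k$. By the definition of $D(M)$ we have $\mathcal I_{M\setminus\{k\}}\not\subseteq\mathcal I_k$, and since $D(M)\setminus\{k\}\subseteq M\setminus\{k\}$ the monotonicity above gives $\mathcal I_{M\setminus\{k\}}\subseteq\mathcal I_{D(M)\setminus\{k\}}$; hence any witness to the non-containment for $M\setminus\{k\}$ is also a witness for $D(M)\setminus\{k\}$, as needed. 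Conceptually, by Lemma~\ref{sobre puntos limites 1}(1) one has $m\in D(M)$ iff $\mathcal I_m^*\notin\overline{\mathfrak D_{M\setminus\{m\}}^*}$, so $D(M)$ is exactly the set of indices of isolated points of $\mathfrak D_M^*$, and the isolated points of any space form a discrete subspace.

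The arguments are all short once Lemma~\ref{sobre puntos limites 1} is in hand, so the main difficulty is not depth but bookkeeping in the degenerate cases, which must be handled for the equivalences to be literally correct. The two points I expect to require explicit care are: the exclusion of $n\in M$ (and of repeated ideals) in the biconditional of (2), where the equivalence genuinely fails if $n$ already occurs in $M$; and the status of $D(M)$ in (4) when it is finite, in which case $\{\mathcal I_m\,\colon\,m\in D(M)\}$ is trivially discrete but one must decide whether the convention for $\mathcal{DI}(\mathfrak E)$ is intended to admit finite index sets.
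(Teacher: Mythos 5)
Your proposal is correct and takes essentially the same route as the paper's proof: every part is reduced to Lemma~\ref{sobre puntos limites 1}, with (1) applied pointwise, (2) read off from (1) together with discreteness, (3) deduced from (2), and (4) obtained from the monotonicity $\mathcal I_{M\setminus\{m\}}\subseteq\mathcal I_{D(M)\setminus\{m\}}$ plus part (2) of that lemma. The degenerate cases you flag (the need for $n\notin M$ and pairwise distinct ideals in (2), and the possibility that $D(M)$ is finite in (4), whereas $\mathcal{DI}(\mathfrak{E})\subseteq[\mathbb N]^\omega$) are real gaps in the literal statement that the paper's own proof also passes over silently, so your version is, if anything, slightly more careful.
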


\proof
(1) By Lemma  \ref{sobre puntos limites 1} we have that 
\begin{gather*}
    \mathcal I_M=\mathcal I_{M\cup A}\Longleftrightarrow\mathcal I_M\subset\mathcal I_A\Longleftrightarrow\mathcal I_n^*\in\overline{\{{\mathcal I}_m^*\,\colon\,m\in M\}}\,\,\mbox{for all $n\in A$.}
\end{gather*}

(2) By Lemma \ref{sobre puntos limites 1} we have
\begin{gather*}
  \mathcal I_M\not\subseteq\mathcal I_n \Longleftrightarrow  \mathcal I_n^*\not\in\overline{\{\mathcal I_m^*\,\colon\,m\in M\}} \Longleftrightarrow\mbox{$\{\mathcal I_m\,\colon\,m\in M\}\cup\{\mathcal I_n\}$ is discrete}\Longleftrightarrow M\cup\{n\}\in\mathcal{DI}(\mathfrak{E}).
\end{gather*}

(3) Suppose $M$ is maximal in $\mathcal{DI}(\mathfrak{E})$. It suffices to show that $\mathcal I_M\subseteq\mathcal I_\mathbb N$, that is, $\mathcal I_M\subseteq\mathcal I_n$ for all $n\in\mathbb N$. 
Let $n\in\mathbb N$ be given which clearly can be assumed not in $M$.  By the maximality of $M$, $M\cup\{n\}\not\in\mathcal{DI}(\mathfrak{E})$. Thus,  by (2), $\mathcal I_M\subseteq\mathcal I_n$. 

Conversely, suppose that $M$ is not maximal. Thus there is $n\in\mathbb N\setminus M$ such that
$M\cup\{n\}\in\mathcal{DI}(\mathfrak{E})$. By (2) we have $\mathcal I_M\not\subseteq\mathcal I_n$. Thus  $\mathcal I_{\mathbb N}\neq \mathcal I_M$.

(4) Notice that for all $m\in D(M)$ we have $\bigcap_{n\in D(M)\setminus\{m\}}\mathcal I_n\not\subseteq\mathcal I_m$. By Lemma \ref{sobre puntos limites 1} we conclude that $D(M)\in\mathcal{DI}(\mathfrak{E})$.
\endproof

\begin{theorem}
\label{strongmax2}
Let $\mathfrak{E}= \{\ideal_n:n\in \N\}$ be a collection of maximal ideals on $\N$ and $\ideal=\bigcap_n \ideal_n$.  Then,  $\ideal$ is strongly $\omega$-maximal if and only if  $\mathcal{DI}(\mathfrak{E})$  has a $\subseteq$-maximal element. Moreover, $\mathcal{DI}(\mathfrak{E})$ has at most one maximal element. In particular, a strongly $\omega$-maximal ideal admits a unique representation by a discrete collection of maximal ideals.
\end{theorem}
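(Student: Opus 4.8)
The plan is to recast everything in $\beta\N$, working with the point set $X := \{\ideal_n^* : n \in \N\}$ and its closure $\overline{X}$, and to locate the candidate maximal element of $\mathcal{DI}(\mathfrak{E})$ as the set of indices $k$ for which $\ideal_k^*$ is isolated in $\overline{X}$. The one topological fact I would isolate first — and which I expect to be the real engine of the argument — is: if $D\subseteq\beta\N$ is discrete, then every point of $D$ is isolated in $\overline{D}$, and hence the isolated points of $\overline{D}$ are exactly the points of $D$. This rests on $\beta\N$ having a clopen basis $\{A^* : A\subseteq\N\}$: if $A^*\cap D=\{p\}$ witnesses that $p$ is isolated in $D$, then clopenness gives $A^*\cap\overline{D}=\overline{A^*\cap D}=\{p\}$ (for clopen $C$ one has $C\cap\overline{S}=\overline{C\cap S}$), so $p$ is isolated in $\overline{D}$; conversely, any isolated point of $\overline{D}$ must lie in the dense subset $D$.

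For the equivalence, the ``if'' direction is immediate: if $M$ is $\subseteq$-maximal in $\mathcal{DI}(\mathfrak{E})$, then Proposition \ref{strongmax1}(3) gives $\ideal_M=\ideal_{\N}=\ideal$, while $M\in\mathcal{DI}(\mathfrak{E})$ says $\{\ideal_n : n\in M\}$ is discrete, so the infinite set $M$ exhibits $\ideal$ as the intersection of a discrete collection of maximal ideals. For ``only if'', suppose $\ideal=\bigcap_m\mathcal{K}_m$ with $\{\mathcal{K}_m\}$ discrete and put $Y:=\{\mathcal{K}_m^* : m\in\N\}$. Since $\bigcap\mathfrak{E}=\ideal=\bigcap_m\mathcal{K}_m$, Lemma \ref{sobre puntos limites 1}(1) gives $Y\subseteq\overline{X}$ and $X\subseteq\overline{Y}$, hence $\overline{X}=\overline{Y}$. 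By the topological fact the points of the discrete set $Y$ are isolated in $\overline{Y}=\overline{X}$, so $Y\subseteq I(\overline{X})$ (the isolated points of $\overline{X}$), and since $Y$ is dense in $\overline{X}$, so is $I(\overline{X})$. As an isolated point of $\overline X$ lies in the dense set $X$, we get $I(\overline{X})\subseteq X$, whence $I(\overline{X})=\{\ideal_k^* : k\in M\}$ for $M:=\{k : \ideal_k^*\in I(\overline{X})\}$; this $M$ is infinite, $\{\ideal_n : n\in M\}$ is discrete, and $\overline{\{\ideal_k^* : k\in M\}}=\overline{X}\supseteq X$ forces $\ideal_M=\ideal_{\N}$ by Lemma \ref{sobre puntos limites 1}(1). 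Thus $M\in\mathcal{DI}(\mathfrak{E})$ and is maximal by Proposition \ref{strongmax1}(3).

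For uniqueness, let $M,M'$ both be maximal. Proposition \ref{strongmax1}(3) gives $\ideal_M=\ideal_{\N}=\ideal_{M'}$, so by Lemma \ref{sobre puntos limites 1}(1) the discrete sets $D_M:=\{\ideal_n^* : n\in M\}$ and $D_{M'}$ have the common closure $\overline{X}$, and the topological fact yields $D_M=I(\overline{X})=D_{M'}$. Here the delicate point — which I expect to be the main obstacle — is the possibility of repeated ideals in $\mathfrak{E}$, so that $D_M$ alone need not determine $M$. This is handled by showing a maximal $M$ is \emph{saturated}, $M=\{k : \ideal_k^*\in D_M\}$: if some $k\notin M$ had $\ideal_k^*\in D_M$, then $\{\ideal_j : j\in M\cup\{k\}\}=\{\ideal_j : j\in M\}$ would still be discrete, so $M\cup\{k\}\in\mathcal{DI}(\mathfrak{E})$ would contradict maximality. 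Hence $M=\{k:\ideal_k^*\in D_M\}=\{k:\ideal_k^*\in D_{M'}\}=M'$. Finally, applying the same comparison of closures to two discrete representations $\{\mathcal{K}_m\}$ and $\{\mathcal{L}_j\}$ of a strongly $\omega$-maximal $\ideal$ shows that their underlying point sets both equal the isolated points of their common closure, giving the ``unique representation'' conclusion.
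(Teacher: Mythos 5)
Your proof is correct, and it takes a genuinely different route from the paper's. The paper's argument is combinatorial: for the ``only if'' direction it applies Proposition \ref{particiones2} to the discrete representation $\{\mathcal{K}_n\,\colon\,n\in\N\}$ to obtain a partition $\{A_n\,\colon\,n\in\N\}$ of $\N$ with $A_n\in\mathcal{K}_n^*\cap\bigcap_{j\neq n}\mathcal{K}_j$, and then uses restrictions (Lemma \ref{condicion equivalente a maximalidad}) to show that each $\mathcal{K}_n$ literally occurs in $\mathfrak{E}$: if $A_n\notin\ideal_m$, then $\ideal_m\restriction A_n=\mathcal{K}_n\restriction A_n$, hence $\ideal_m=\mathcal{K}_n$; the maximal element is the resulting set of indices, and uniqueness is also proved through such a partition. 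You instead work purely topologically in $\beta\N$: your key lemma --- that a discrete $D\subseteq\beta\N$ is exactly the set of isolated points of $\overline{D}$, correctly proved from the clopen-basis identity $A^*\cap\overline{D}=\overline{A^*\cap D}$ --- combined with Lemma \ref{sobre puntos limites 1}(1), which converts equality of intersections into equality of closures, lets you identify the maximal element of $\mathcal{DI}(\mathfrak{E})$ directly as the index set of the isolated points of $\overline{\mathfrak{E}^*}$; uniqueness then falls out because any two maximal elements name this same set of isolated points. As for what each approach buys: the paper's partition is constructive and is reused elsewhere (it is precisely the input to Lemma \ref{construccion de la proyeccion} and Theorem \ref{proyecciones1}), whereas your argument is shorter, bypasses Proposition \ref{particiones2} and the restriction lemma altogether, and meshes conceptually with Theorem \ref{scattered}, which already ties strong $\omega$-maximality to density of isolated points. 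A further small merit of your version: the ``saturation'' step handles possible repetitions in the enumeration of $\mathfrak{E}$ explicitly, whereas the paper's assertion that there is a \emph{unique} $l_n$ with $\ideal_{l_n}=\mathcal{K}_n$ is literally true only when the enumeration $n\mapsto\ideal_n$ is injective.
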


\begin{proof}
If $M$ is a maximal element of $\mathcal{DI}(\mathfrak{E})$, by Proposition \ref{strongmax1}, $\ideal=\ideal_M$, thus $\ideal$ is strongly $\omega$-maximal. 

Conversely, suppose  $\ideal$ is strongly $\omega$-maximal and let $\mathfrak{D}=\{\mathcal{K}_n:n\in \N\}$ be  a discrete collection  of maximal ideals on $\N$ such that $\ideal=\bigcap_n \mathcal{K}_n$. By Proposition \ref{particiones2}, there is a partition  $\{A_n:n\in \N\}$ of $\N$ such that $A_n\not\in \mathcal{K}_n$ for all $n\in \N$.  We will show that for each $n$ there is a unique $l_n$ such that $\mathcal I_{l_n}=\mathcal K_n$. Then,  letting  $M=\{l_n\,\colon\,n\in \N\}$ we have that $M\in \mathcal{DI}(\mathfrak{E})$, as $\mathfrak{D}$ is discrete, $\mathcal I=\mathcal I_M$ and $M$ is maximal by Proposition \ref{strongmax1}.

Given $n\in \N$, as $A_n\not\in \ideal$ for all $n$, there is $m$ such that $A_n\not\in \ideal_m$. We claim that $\ideal_m\restriction A_n= \mathcal{K}_n\restriction A_n$. Indeed, as the $A_n$'s are pairwise disjoint, $\ideal\restriction A_n= \mathcal{K}_n\restriction A_n$. In particular, $\mathcal{K}_n\restriction A_n\subseteq \ideal_m\restriction A_n$, but $\ideal_m\restriction A_n$ and $\mathcal{K}_m\restriction A_n$ are maximal ideals on $A_n$, thus
$\ideal_m\restriction A_n= \mathcal{K}_n\restriction A_n$. Therefore $\ideal_m= \mathcal{K}_n$. Notice that this argument shows that for all $n$ there is a unique $l_n$ such that $\ideal_{l_n}= \mathcal{K}_n$.

Finally,  suppose $M_1$ and $M_2$ are two maximal elements of $\mathcal{DI}(\mathfrak{E})$. By Proposition \ref{strongmax1}, $\ideal=\ideal_{M_1}=\ideal_{M_2}$. Let $\{A_n:n\in M_1\}$ be a partition   of $\N$ such that $A_n\not\in \mathcal{I}_n$ for all $n\in M_1$.  Let $n_1\in M_1\setminus M_2$. We have shown above that $n_1$ is the unique $m\in \N$ such that $A_{n_1}\not\in \ideal_m$. Thus $A_{n_1}\in \ideal_m$ for all $m\in M_2$. Thus $A_{n_1}\in \ideal_{M_2}=\ideal$, which contradicts that $A_{n_1}\not\in \ideal_{n_1}$.

For the last claim, suppose $\mathfrak{E}_1$  and $\mathfrak{E}_2 $ are two discrete countable collections of maximal ideals such that $\bigcap \mathfrak{E}_1=\bigcap \mathfrak{E}_2$. Let $\{J_n: n\in \N\}$ be an enumeration (without repetitions) of $\mathfrak{E}_1\cup \mathfrak{E}_2$. Since $\bigcap \mathfrak{E}_1=\bigcap (\mathfrak{E}_1\cup \mathfrak{E}_2)$,  $\mathcal{DI}(\mathfrak{E}_1\cup \mathfrak{E}_2)$ has a unique maximal element $M$. Suppose there is $n_0$ such that $J_{n_0}\in \mathfrak{E}_2\setminus \mathfrak{E}_1$. Let  $L=\{n\in \N: J_n\in \mathfrak{E}_2\}$ and notice that  $L\in \mathcal{DI}(\mathfrak{E}_1\cup \mathfrak{E}_2)$ is maximal by 
Lemma \ref{strongmax1} and thus $n_0\in M$. On the other hand, $\bigcap \{J_n: n\in M\setminus \{n_0\}\}=\bigcap \mathfrak{E}_2 \subseteq J_{n_0}$ and, by Proposition \ref{sobre puntos limites 1}, $M$ is not discrete, a contradiction.
\end{proof}

We recall that a topological space $X$ is {\em scattered} if every non-empty subspace of $X$ has an isolated point.   
We say that a countable collection $\mathfrak{E}$ of maximal ideals on $\N$ is {\em scattered} if $\mathfrak{E}^*$ 
is a scattered subspace of $\beta\N$.

\begin{theorem}
\label{scattered}
Let $\ideal$ be an ideal on $\N$. Then $\ideal=\bigcap \mathfrak{E}$ for a countable scattered collection $\mathfrak{E}$ of maximal ideals on $\N$ if and only if $\ideal$ is strongly $\omega$-maximal. 
\end{theorem}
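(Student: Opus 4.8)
The plan is to prove the biconditional by splitting it into its two implications, the implication ``strongly $\omega$-maximal $\Rightarrow$ scattered representation'' being essentially immediate and the reverse one carrying all the content.

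For the easy implication, suppose $\ideal$ is strongly $\omega$-maximal, witnessed by a discrete collection $\mathfrak{E}=\{\mathcal{I}_n : n\in\N\}$ with $\ideal=\bigcap_n\mathcal{I}_n$. Since $\mathfrak{E}^*=\{\mathcal{I}_n^* : n\in\N\}$ is discrete in $\beta\N$, every one of its points is isolated, so $\mathfrak{E}^*$ is trivially scattered. Hence $\mathfrak{E}$ itself is a countable scattered collection of maximal ideals with $\ideal=\bigcap\mathfrak{E}$, as required.

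For the substantial implication, let $\mathfrak{E}=\{\mathcal{I}_n : n\in\N\}$ be a countable scattered collection with $\ideal=\bigcap_n\mathcal{I}_n=\mathcal{I}_{\N}$; I will extract an explicit discrete subcollection whose intersection is still $\ideal$. The natural candidate is the index set of the isolated points of $\mathfrak{E}^*$. Concretely, I take $D(\N)=\{m\in\N : \bigcap_{n\neq m}\mathcal{I}_n\not\subseteq\mathcal{I}_m\}$ from Proposition \ref{strongmax1}(4), which is automatically an element of $\mathcal{DI}(\mathfrak{E})$, i.e.\ $\{\mathcal{I}_m : m\in D(\N)\}$ is discrete. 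By Lemma \ref{sobre puntos limites 1}(1) the condition $\bigcap_{n\neq m}\mathcal{I}_n\not\subseteq\mathcal{I}_m$ is equivalent to $\mathcal{I}_m^*\notin\overline{\{\mathcal{I}_n^* : n\neq m\}}$, that is, to $\mathcal{I}_m^*$ being isolated in $\mathfrak{E}^*$; thus $D(\N)$ indexes exactly the isolated points.

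The key step is to verify $\mathcal{I}_{D(\N)}=\ideal$, and here scatteredness enters through the standard fact that in a scattered space the isolated points are dense: every nonempty relatively open subset of $\mathfrak{E}^*$ is itself scattered and nonempty, hence has a point isolated in that subset, which is then isolated in $\mathfrak{E}^*$. This density says precisely that $\mathfrak{E}^*\subseteq\overline{\{\mathcal{I}_m^* : m\in D(\N)\}}$, with the closure computed in $\beta\N$. Applying Proposition \ref{strongmax1}(1) with $M=D(\N)$ and $A=\N$ then gives $\mathcal{I}_{D(\N)}=\mathcal{I}_{D(\N)\cup\N}=\mathcal{I}_{\N}=\ideal$, so $\{\mathcal{I}_m : m\in D(\N)\}$ is a discrete collection with intersection $\ideal$ and $\ideal$ is strongly $\omega$-maximal. (Equivalently, $D(\N)$ is then a $\subseteq$-maximal element of $\mathcal{DI}(\mathfrak{E})$ by Proposition \ref{strongmax1}(3), so one may instead invoke Theorem \ref{strongmax2}.) The main point to handle carefully is not a genuine obstacle but a matter of keeping the ambient topology straight: discreteness and the density of the isolated points are statements about $\mathfrak{E}^*$ in its subspace topology, whereas Proposition \ref{strongmax1}(1) requires a closure in $\beta\N$, and these agree because density of the isolated points in the subspace is exactly the inclusion $\mathfrak{E}^*\subseteq\overline{\{\mathcal{I}_m^* : m\in D(\N)\}}$ taken in $\beta\N$. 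One should also note, to match the standing convention that discrete collections are infinite, that $D(\N)$ is infinite whenever $\mathfrak{E}$ is, since a finite dense subset of the Hausdorff space $\mathfrak{E}^*$ would force $\mathfrak{E}^*$ to be finite.
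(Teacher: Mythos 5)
Your proof is correct and follows essentially the same route as the paper: both extract the set of isolated points of $\mathfrak{E}^*$, observe that scatteredness makes it a dense discrete subfamily, and conclude that its intersection is still $\ideal$. The only differences are cosmetic — you prove the density of isolated points and identify them explicitly as $D(\N)$, concluding directly via Proposition \ref{strongmax1}(1), whereas the paper cites Semadeni for the density fact and concludes via maximality in $\mathcal{DI}(\mathfrak{E})$ together with Theorem \ref{strongmax2}, a route you also note as an alternative.
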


\begin{proof}
Suppose $\mathfrak{E}^*$ is scattered.  Since the collection of isolated points of $\mathfrak{E}^*$ is discrete and dense in $\mathfrak{E}^*$ (see \cite[p. 150]{semadeni}), there is  $M\in \mathcal{DI}(\mathfrak{E})$ such that $\{\ideal^*:\,n \in M\}$ is dense in $\mathfrak{E}^*$, then easily $M$ is maximal in $\mathcal{DI}(\mathfrak{E})$. Thus by Theorem \ref{strongmax2}, $\ideal$ is strongly $\omega$-maximal. 
The converse obviously follows from Theorem \ref{strongmax2}. 
\end{proof}

\begin{remark}
\label{minimal-repre}
Following  \cite{kadetsetal2022}, a collection  of non principal ultrafilters $\mathfrak{M}$ is said {\em minimal}, if $\bigcap \mathfrak{M}\neq\bigcap (\mathfrak{M}\setminus\{\mathcal{F}\})$ for every $\mathcal{F}\in \mathfrak{M}$. A filter $\mathcal{F}$ has a {\em minimal representation} if $\mathcal F=\bigcap \mathfrak M$ for some  minimal collection $\mathfrak{M}$ of ultrafilters.   Lemma \ref{sobre puntos limites 1} says that $\mathfrak{M}$ is minimal if and only if it is a discrete subset of  $\beta\N$. And, from Theorem \ref{scattered} it follows that a filter $\mathcal{F}$ has a countable minimal representation if and only if $\mathcal{F}^*$ is strongly $\omega$-maximal ideal.
They also showed that  whenever a filter admits a countable minimal representation,  it is unique. This also follows from Theorem \ref{strongmax2}. 
\end{remark}

We present two examples. The first one shows that  a strongly $\omega$-maximal ideal can also be represented by a  non-discrete countable collection of maximal ideals. The second one provides an example of a $\omega$-maximal ideal which is not strongly $\omega$-maximal.

\begin{example}
Let $\mathfrak{E}= \{\ideal_n:n\in \N\}$ be a discrete collection of maximal ideals on $\N$ and $\ideal=\bigcap_n \ideal_n$. Let $\{A_n:n\in \N\}$ be a partition of $\N$ such that $A_n\not\in \ideal_n$ for all $n\in\N$. Let $\idealj_0$ be the ideal generated by $\ideal\cup \{A_n:n\in \N\}$. Then $\idealj_0$ is non-trivial, in fact, if $\N=B\cup A_0\cup\ldots \cup A_m$, then $A_{m+1}\subseteq B$, thus $B\not\in \ideal$ and hence $\N\not\in \idealj_0$.  Let $\idealj$ be a maximal ideal extending $\idealj_0$. Then $\ideal=\bigcap ( \mathfrak{E}\cup \{\idealj\})$ but  $\mathfrak{E}\cup \{\idealj\}$ is not discrete.  
\end{example}

\begin{example}
\label{Ejemplo-no-omega-max}
Let $\mathcal{F}$ be a filter on $\N$  (containing all co-finite sets). Define a topology $\tau_\mathcal{F}$ over $\seq$ by letting a subset $U$ of $\seq$ be open, if $\{n\in \N: \; s\widehat{\;}n \in U\}\in\mathcal{F}$ for all $s\in U$.
Then $(\seq, \tau_\mathcal{F})$ is Hausdorff, zero-dimensional  and without isolated points. Moreover,  when  $\mathcal{F}$ is a non-principal ultrafilter, $(\seq, \tau_\mathcal{F})$ is extremally disconnected, i.e., the closure of an open set is open (see \cite{Louveau72}  and \cite{Dow1988}). Since this space is zero-dimensional and has no isolated points, for every discrete $D\subseteq \seq$ there is $s\in \seq\setminus D$ such that $D\cup\{s\}$ is still discrete, i.e., there are no maximal discrete subsets of $\seq$.   It is a classical fact that every countable extremally disconnected Hausdorff space can be embedded in $\beta\N$ (see, for instance, \cite[Theorem 1.4.7]{vanMillHandbook}). Therefore, if $\mathfrak{E}\subseteq \beta\N$ is homeomorphic to our space, then  $\mathfrak{E}$ has no maximal discrete subsets, that is $\ideal=\bigcap \{\mathcal{U}^*: \mathcal{U}\in  \mathfrak{E}\}$ is an $\omega$-maximal ideal which is not strongly $\omega$-maximal (by Theorem \ref{strongmax2}). 
\end{example}

Our next result implies that the $\omega$-maximal ideal constructed in the previous example is nowhere maximal. 

\begin{proposition}
    Let $\mathfrak{D}=\{\mathcal I_n\,\colon\,n\in\mathbb N\}$ be a collection of maximal ideals and $\mathcal I=\bigcap_{n\in\mathbb N}\mathcal I_n$. Then, $\mathfrak{D}$ has an isolated point if and only if there is
    $A\in\mathcal I^+$ such that $\mathcal I\restriction A$ is a maximal ideal on $A$.
\end{proposition}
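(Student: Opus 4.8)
The plan is to unwind the topological hypothesis into a combinatorial statement about the ideals $\mathcal{I}_n$ and then to exploit the maximality of a restricted ideal. Throughout I use that each $\mathcal{I}_n$ is maximal, so $\mathcal{I}_n^*=\mathcal{I}_n^+$ and hence, for any $B\subseteq\mathbb{N}$, one has $B\in\mathcal{I}_n^*$ precisely when $B\notin\mathcal{I}_n$. Recall that the basic open sets of $\beta\mathbb{N}$ are the $A^*$. Thus ``$\mathfrak{D}$ has an isolated point'' unwinds to the existence of an index $k$ and a set $A\subseteq\mathbb{N}$ with $A^*\cap\mathfrak{D}^*=\{\mathcal{I}_k^*\}$; equivalently, $A\notin\mathcal{I}_k$ while $A\in\mathcal{I}_j$ for every $j$ with $\mathcal{I}_j^*\neq\mathcal{I}_k^*$.

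For the forward implication I would start from such an $A$. Since $\mathcal{I}=\bigcap_n\mathcal{I}_n\subseteq\mathcal{I}_k$ and $A\notin\mathcal{I}_k$, we get $A\in\mathcal{I}^+$. It then suffices to prove $\mathcal{I}\restriction A=\mathcal{I}_k\restriction A$, because the restriction of a maximal ideal to a positive set is again maximal (if $C\subseteq A$ then either $C\in\mathcal{I}_k$ or $C^c\in\mathcal{I}_k$, and in the latter case $A\setminus C\in\mathcal{I}_k\restriction A$). The inclusion $\mathcal{I}\restriction A\subseteq\mathcal{I}_k\restriction A$ is immediate from $\mathcal{I}\subseteq\mathcal{I}_k$. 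For the reverse inclusion, take $B\subseteq A$ with $B\in\mathcal{I}_k$; for each $j$ either $\mathcal{I}_j=\mathcal{I}_k$, giving $B\in\mathcal{I}_j$ directly, or $A\in\mathcal{I}_j$, giving $B\subseteq A\in\mathcal{I}_j$; in both cases $B\in\mathcal{I}_j$, so $B\in\bigcap_n\mathcal{I}_n=\mathcal{I}$. Hence $\mathcal{I}\restriction A=\mathcal{I}_k\restriction A$ is maximal on $A$.

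For the converse, assume $A\in\mathcal{I}^+$ and $\mathcal{I}\restriction A$ is maximal on $A$. Put $S=\{n\in\mathbb{N}\colon A\notin\mathcal{I}_n\}$, which is nonempty since $A\notin\mathcal{I}=\bigcap_n\mathcal{I}_n$. For each $n\in S$ the ideal $\mathcal{I}_n\restriction A$ is proper on $A$ (because $A\notin\mathcal{I}_n$) and contains the maximal ideal $\mathcal{I}\restriction A$ (because $\mathcal{I}\subseteq\mathcal{I}_n$); maximality forces $\mathcal{I}_n\restriction A=\mathcal{I}\restriction A$. Thus all the ultrafilters $\mathcal{I}_n^*$ with $n\in S$ contain $A$ and share the same trace on $A$. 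The key observation is that an ultrafilter containing $A$ is determined by its trace on $A$ (for every $B$, $B\in\mathcal{I}_n^*$ iff $B\cap A\in\mathcal{I}_n^*$), so all these ultrafilters coincide; denote the common value by $\mathcal{I}_k^*$. Then $A^*\cap\mathfrak{D}^*=\{\mathcal{I}_n^*\colon A\in\mathcal{I}_n^*\}=\{\mathcal{I}_n^*\colon n\in S\}=\{\mathcal{I}_k^*\}$, exhibiting $\mathcal{I}_k^*$ as an isolated point of $\mathfrak{D}^*$.

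The step I expect to be the main obstacle is the \emph{collapsing} in the converse: a priori many indices $n$ can have $A\in\mathcal{I}_n^+$, and the real content is that the maximality of $\mathcal{I}\restriction A$ forces all such $\mathcal{I}_n$ to agree not merely on $A$ but on all of $\mathbb{N}$, so that they represent a single point of $\beta\mathbb{N}$. Once this collapse is established, isolation is automatic, since $A^*$ is then a basic open neighborhood meeting $\mathfrak{D}^*$ in exactly one point. The remaining verifications (maximality of restrictions, the trace determination of ultrafilters) are routine and rest only on the elementary facts recalled before Lemma \ref{suma-directa} and in Lemma \ref{condicion equivalente a maximalidad}.
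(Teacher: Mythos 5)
Your proof is correct and follows essentially the same route as the paper's: the forward direction identifies $\mathcal I\restriction A$ with $\mathcal I_k\restriction A$ on a basic neighborhood $A^*$ witnessing isolation, and the converse is exactly the paper's observation that $\mathcal I\restriction A=\mathcal I_m\restriction A$ for a \emph{unique} maximal ideal in the collection. Your ``collapsing'' argument simply spells out in detail the uniqueness step that the paper states without proof, so the two proofs coincide in substance.
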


\begin{proof}
Suppose that $\mathcal I_n^*$ is an isolated point of $\mathfrak{D}^*$, that is, there is $A\subseteq\mathbb N$ such that $\mathfrak{D}^*\cap A^*=\{\mathcal I^*_n\}$. Then, $A\in\mathcal I^+$ and $\mathcal I\restriction A=\mathcal I_n\restriction A$ is a maximal ideal on $A$. Conversely, suppose that there is $A\in\mathcal I^+$ such that $\mathcal I\restriction A$ is a maximal ideal on $A$. Notice that $\mathcal I\restriction A=\mathcal I_m\restriction A$ for some unique $m\in\mathbb N$. Thus, $\mathcal I_m^*$ is an isolated point of $\mathfrak D^*$.
\end{proof}

\section{Dimension of $c_{0,\mathcal J}/c_{0,\mathcal I}$}
\label{dimension-quotient}

In this section we  present several results about the dimension of the quotient $c_{0,\mathcal J}/c_{0,\mathcal I}$ when $\mathcal I$ and $\mathcal J$ are ideals on $\N$ with $\mathcal I\subsetneq\mathcal J$. In particular, we obtain some information about $\ell_\infty/c_{0,\mathcal I}$ which corresponds to the case $\idealj=\mathcal{P}(\N)$.

Now we introduce a relativized version of $k$-maximality.

\begin{definition}
\label{k-maximal def}
Let $\mathcal I$ and $\mathcal J$ be ideals on $\mathbb N$ with $\mathcal I\subsetneq\mathcal J$ and $k$ be a positive integer. We say that $\mathcal I$ is $k$-maximal in $\mathcal J$  if there is a family $\{\mathcal L_1,\ldots,\mathcal L_k\}$ of pairwise distinct maximal ideals on $\mathbb N$ with  $\mathcal J\not\subseteq\mathcal L_i$ for all $1\leq i\leq k$ and $\mathcal I=(\bigcap_{i=1}^k\mathcal L_i)\cap\mathcal J$.
\end{definition}

It is clear that, for a positive integer $k$,  an ideal is $k$-maximal in the trivial ideal $\mathcal{P}(\N)$ if and only if it is $k$-maximal as it was  defined in Section \ref{kappa-max}. To provide  a characterization of the relativized version of $k$-maximality we need to  introduce a special type of disjoint families.

\begin{definition}\label{I-J-k family}
Let $\mathcal I$ and $\mathcal J$ be ideals on $\mathbb N$ with $\mathcal I\subsetneq\mathcal J$ and $k$ be a positive integer.
A collection $\{A_1,\ldots,A_k\}$  of subsets of $\N$ is called a $(\mathcal I,\mathcal J,k)$-family if 

\begin{enumerate}
\item $\{A_1,\ldots,A_k\}\subseteq\mathcal J\setminus\mathcal I$;

\item  $A_i\cap A_j=\emptyset$ if $i\neq j$;

\item $B\setminus(A_1\cup\cdots\cup A_k)\in\mathcal I$ for all $B\in\mathcal J\setminus\mathcal I$;

\item $\mathcal I\restriction A_i$ is a maximal ideal on $A_i$ for each $i\in\{1,\ldots,k\}$.
\end{enumerate}

\end{definition}

\begin{proposition}
\label{Carac-k-maximal-J}
Let $\mathcal I$ and $\mathcal J$ be ideals on $\mathbb N$ with $\mathcal I\subsetneq\mathcal J$  and $k$ a positive integer. Then $\mathcal I$ is $k$-maximal in $\mathcal J$ if and only if  there exists a $(\mathcal I,\mathcal J,k)$-family.
\end{proposition}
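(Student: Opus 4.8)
The plan is to prove both implications directly, passing back and forth between the $k$ maximal ideals $\mathcal L_i$ on all of $\mathbb N$ and the disjoint pieces $A_i$ on which $\mathcal I$ looks maximal.

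For the easy direction (a $(\mathcal I,\mathcal J,k)$-family yields $k$-maximality), I would start from a family $\{A_1,\dots,A_k\}$ and set $\mathcal L_i=(\mathcal I\restriction A_i)\sqcup\mathcal P(A_i^c)$. By Lemma~\ref{condicion equivalente a maximalidad}(1) each $\mathcal L_i$ is a maximal ideal on $\mathbb N$, since $\mathcal I\restriction A_i$ is maximal on $A_i$ by condition (4). Pairwise distinctness follows from disjointness (2): for $j\neq i$ one has $A_j\subseteq A_i^c$, so $A_j\in\mathcal L_i$, whereas $A_j\notin\mathcal L_j$ because $\mathcal L_j$ restricted to $A_j$ is the proper ideal $\mathcal I\restriction A_j$. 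The same observation gives $A_i\in\mathcal J\setminus\mathcal L_i$, hence $\mathcal J\not\subseteq\mathcal L_i$. Finally I would verify $\mathcal I=(\bigcap_i\mathcal L_i)\cap\mathcal J$: the inclusion $\subseteq$ is immediate from $\mathcal I\subseteq\mathcal L_i$ and $\mathcal I\subseteq\mathcal J$; for $\supseteq$, if $B\in(\bigcap_i\mathcal L_i)\cap\mathcal J$ then each $B\cap A_i\in\mathcal I\restriction A_i\subseteq\mathcal I$, while $B\setminus\bigcup_iA_i\in\mathcal I$ by (3), so $B\in\mathcal I$.

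For the converse I would use the standard fact that finitely many distinct ultrafilters can be separated by pairwise disjoint sets: since $\mathcal L_1^*,\dots,\mathcal L_k^*$ are distinct, choose pairwise disjoint $D_1,\dots,D_k$ with $D_i\in\mathcal L_i^*$; then $D_i\notin\mathcal L_i$ and $D_i\subseteq D_j^c\in\mathcal L_j$ gives $D_i\in\mathcal L_j$ for $j\neq i$. Using $\mathcal J\not\subseteq\mathcal L_i$, pick $E_i\in\mathcal J\setminus\mathcal L_i$ and set $A_i=D_i\cap E_i$. Since $\mathcal L_i^*$ is a filter, $A_i\in\mathcal L_i^*$, i.e.\ $A_i\notin\mathcal L_i$, hence $A_i\notin\mathcal I$ (as $\mathcal I\subseteq\mathcal L_i$); moreover $A_i\subseteq E_i$ gives $A_i\in\mathcal J$, and $A_i\subseteq D_i$ gives both disjointness (2) and $A_i\in\mathcal L_j$ for $j\neq i$. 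This settles conditions (1) and (2).

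The two remaining conditions carry the real content. For maximality (4) I would show $\mathcal I\restriction A_i=\mathcal L_i\restriction A_i$: the inclusion $\subseteq$ holds because $\mathcal I\subseteq\mathcal L_i$, and for $\supseteq$ any $X\subseteq A_i$ with $X\in\mathcal L_i$ lies in $\mathcal J$ (as $X\subseteq E_i$) and in every $\mathcal L_j$ with $j\neq i$ (as $X\subseteq A_i\in\mathcal L_j$), hence $X\in(\bigcap_j\mathcal L_j)\cap\mathcal J=\mathcal I$; since $\mathcal L_i$ is maximal and $A_i\notin\mathcal L_i$, the restriction $\mathcal L_i\restriction A_i$ is maximal on $A_i$. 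For the covering property (3), given $B\in\mathcal J\setminus\mathcal I$ the set $B'=B\setminus\bigcup_iA_i$ is disjoint from every $A_i$, so $B'\subseteq A_i^c\in\mathcal L_i$ (because $A_i\notin\mathcal L_i$ and $\mathcal L_i$ is maximal); thus $B'\in\bigcap_i\mathcal L_i$ and $B'\in\mathcal J$, forcing $B'\in\mathcal I$. I expect the main obstacle to be organizing the forward construction so that the single choice $A_i=D_i\cap E_i$ simultaneously meets all four requirements — in particular the interplay whereby $A_i\notin\mathcal L_i$ drives both the maximality of $\mathcal I\restriction A_i$ and the covering condition, while membership of $A_i$ in the other $\mathcal L_j$ is exactly what forces $\mathcal I\restriction A_i$ to be no larger than $\mathcal L_i\restriction A_i$.
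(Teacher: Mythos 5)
Your proposal is correct and follows essentially the same route as the paper: in the easy direction you build the maximal ideals $\mathcal L_i=(\mathcal I\restriction A_i)\sqcup\mathcal P(A_i^c)$ exactly as the paper does (via Lemma \ref{condicion equivalente a maximalidad}), and in the converse you take $A_i=D_i\cap E_i$ with $D_i$ pairwise disjoint separators for the $\mathcal L_i^*$ and $E_i\in\mathcal J\setminus\mathcal L_i$, which is precisely the paper's construction (the paper obtains the $D_i$ from Proposition \ref{particiones2}, whose finite case is the standard ultrafilter-separation fact you invoke). The verifications of conditions (1)--(4) also match the paper's, with yours spelling out condition (4) in slightly more detail.
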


\begin{proof}
Suppose there is  a $(\mathcal I,\mathcal J,k)$-family $\{A_1,\ldots,A_k\}$. Define $\mathcal L_i=(\mathcal I\restriction A_i)\sqcup\mathcal P(A_i^c)$ for each $i=1,\ldots,k$. By Lemma \ref{condicion equivalente a maximalidad} each $\mathcal L_i$ is maximal on $\mathbb N$ and $A_i\in\mathcal J\setminus\mathcal L_i$ for all $1\leq i\leq k$. Since $A_i\in\mathcal L_j$ for all $j\neq i$, we have $\mathcal L_i\neq\mathcal L_j$ for $i\neq j$. We claim that $\mathcal I=(\bigcap_{i=1}^k\mathcal L_i)\cap\mathcal J$. The inclusion $\subseteq$ is clear. Now, suppose $B\in(\bigcap_{i=1}^k\mathcal L_i)\cap\mathcal J$. Then $\bigcup_{i=1}^k(B\cap A_i)\in\mathcal I$.  As $B=B\setminus(A_1\cup\cdots \cup A_k) \cup (B\cap (A_1\cup\cdots \cup A_k))$. By  condition (3),  $B\in\mathcal I$.

Conversely, suppose  $\ideal$ is $k$-maximal in $\idealj$ and let  $\{\mathcal L_1,\ldots,\mathcal L_k\}$ be a family of maximal ideals on $\mathbb N$ with $\mathcal J\not\subset\mathcal L_i$ for all $1\leq i\leq k$ and $\mathcal I=(\bigcap_{i=1}^k\mathcal L_i)\cap\mathcal J$.
By Proposition \ref{particiones2}, there is a partition $\{D_1,\ldots,D_k\}$ of $\N$  such that  $D_i\in \mathcal{L}^*_i\cap\mathcal L_j$ for all $j,  i$ in $\{1,\ldots, k\}$ with $j\neq i$. For each $1\leq i\leq k$, let $X_i\in\mathcal J\setminus\mathcal L_i$. Set $A_i=X_i\cap D_i$ for $i\in\{1,\ldots,k\}$. Then  $A_i\in\mathcal J\setminus\mathcal L_i$ for all $i$.
We will prove that $\{A_1,\ldots,A_k\}$ is a $(\mathcal I,\mathcal J,k)$-family. Clearly, the conditions (1) and (2)  of Definition \ref{I-J-k family} hold. It remains to check conditions (3) and (4):
      
\begin{enumerate}
\item[(3)] Let $B\in\mathcal J\setminus\mathcal I$. Observe that $B\setminus A_i\in\mathcal L_i$ for all $i\in\{1,\ldots,k\}$. So, $B\setminus(A_1\cup\ldots\cup A_k)\in\mathcal I$.
     
\item[(4)] Fix $i\in\{1,\ldots,k\}$. Then $\mathcal I\restriction A_i=\mathcal L_i\restriction A_i$. Hence, $\mathcal I\restriction A_i$ is maximal on $A_i$.\qedhere
\end{enumerate}
\end{proof}

\begin{remark}$(\mathcal I,\mathcal J,k)$-families are unique in the following sense. Let $\mathcal I$ and $\mathcal J$ be ideals on $\mathbb N$ with $\mathcal I\subsetneq\mathcal J$.
Suppose that $\mathcal I$ is $k$-maximal in $\mathcal J$ and $\{A_1,\ldots,A_k\}$ is a  $(\mathcal I,\mathcal J,k)$-family. Then 

\begin{enumerate}
\item[(i)]  $\mathcal J=\mathcal I\sqcup\mathcal P(A_1\cup\ldots\cup A_k)$.

\item[(ii)] If $\{A_1',\ldots,A_k'\}$ is also a $(\mathcal I,\mathcal J,k)$-family, then $(\bigcup_{1\leq i\leq k} A_i)\triangle(\bigcup_{1\leq i\leq k} A_i')\in\mathcal I$. 
\end{enumerate}
\end{remark}

\begin{lemma}
\label{linear independence}
Let $\mathcal I$ and $\mathcal J$ be ideals on $\mathbb N$ with $\mathcal I\subseteq \mathcal J$. If $\mathcal A\subseteq \mathcal J$ is a $\mathcal I$-AD family, then the set $\{\chi_A+c_{0,\mathcal I}\,\colon\,A\in\mathcal A\}$ is a linearly independent subset of $c_{0,\mathcal J}/c_{0,\mathcal I}$.
\end{lemma}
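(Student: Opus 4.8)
The plan is to verify directly that any finite linear combination of the cosets which represents the zero coset must have all coefficients equal to zero. First I would check that the cosets are genuinely elements of the quotient: for each $A\in\mathcal A\subseteq\mathcal J$ the sequence $\chi_A$ belongs to $c_{0,\mathcal J}$, since $A(\varepsilon,\chi_A)$ equals $A\in\mathcal J$ when $0<\varepsilon\le 1$ and equals $\emptyset$ when $\varepsilon>1$. Hence each $\chi_A+c_{0,\mathcal I}$ is a well-defined element of $c_{0,\mathcal J}/c_{0,\mathcal I}$, and linear independence is a meaningful question.

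For the independence itself, suppose $A_1,\dots,A_n$ are distinct members of $\mathcal A$ and scalars $c_1,\dots,c_n$ satisfy $f:=\sum_{i=1}^n c_i\chi_{A_i}\in c_{0,\mathcal I}$; the goal is to conclude every $c_i=0$. The central device is, for each fixed $i$, the \emph{private part} $B_i:=A_i\setminus\bigcup_{j\ne i}A_j$. Because $\mathcal A$ is $\mathcal I$-AD, we have $A_i\cap A_j\in\mathcal I$ for all $j\ne i$, so $A_i\setminus B_i=\bigcup_{j\ne i}(A_i\cap A_j)$ is a finite union of members of $\mathcal I$ and therefore lies in $\mathcal I$. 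Since $A_i\in\mathcal I^+$ and $A_i=B_i\cup(A_i\setminus B_i)$ with $A_i\setminus B_i\in\mathcal I$, downward closure of $\mathcal I$ forces $B_i\notin\mathcal I$, i.e. $B_i\in\mathcal I^+$.

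I would then exploit that $f$ is constantly equal to $c_i$ on $B_i$, because every $m\in B_i$ lies in $A_i$ but in no $A_j$ with $j\ne i$. If some $c_i\ne0$, choosing $\varepsilon=|c_i|$ gives $B_i\subseteq A(\varepsilon,f)=\{m:|f(m)|\ge\varepsilon\}$; since $f\in c_{0,\mathcal I}$ we have $A(\varepsilon,f)\in\mathcal I$, and downward closure yields $B_i\in\mathcal I$, contradicting the previous step. Hence all $c_i=0$ and the family of cosets is linearly independent.

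I do not anticipate a genuine obstacle here; the only point demanding care is the recognition that the $\mathcal I$-AD hypothesis is exactly what keeps the private part $B_i$ outside $\mathcal I$ after deleting the overlaps, so that the constant value $c_i$ of $f$ on an $\mathcal I^+$-set becomes detectable by the $\mathcal I$-convergence condition defining $c_{0,\mathcal I}$.
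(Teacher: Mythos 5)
Your proof is correct and takes essentially the same approach as the paper: both disjointify the family using the $\mathcal I$-AD hypothesis (the paper uses $D_j=A_j\setminus(A_1\cup\cdots\cup A_{j-1})$, you use the private parts $B_i=A_i\setminus\bigcup_{j\neq i}A_j$) and then detect each coefficient on a set that remains in $\mathcal I^+$. One cosmetic slip: the conclusion $B_i\notin\mathcal I$ follows from closure of $\mathcal I$ under finite unions applied to $A_i=B_i\cup(A_i\setminus B_i)$, not from downward closure; the logic you wrote is otherwise exactly right.
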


\begin{proof}
Let $A_1,\ldots,A_m\in\mathcal A$ be such that $\sum_{j=1}^ma_j(\chi_{A_j}+c_{0,\mathcal I})={\bf0}$ for some $\{a_j\,\colon\,1\leq j\leq m\}\subset\mathbb K$. Let $D_1=A_1$ and $D_j=A_j\setminus(A_1\cup\cdots\cup A_{j-1})$ for $2\leq j\leq m$. Since $A_i\cap A_j\in\mathcal I$ for all $i,j\in\{1,\ldots,k\}$ with $i\neq j$,  we have $D_j\in\mathcal J\cap \mathcal I^+$  and $\chi_{A_j}+c_{0,\mathcal I}=\chi_{D_j}+c_{0,\mathcal I}$ for all $j\in\{1,\ldots,m\}$. So, $\sum_{j=1}^ma_j(\chi_{D_j}+c_{0,\mathcal I})={\bf0}$. Thus, $a_j=0$ for all $1\leq j\leq m$.
\end{proof}

\begin{theorem}
\label{I+particion}
Let $\mathcal I$ and $\mathcal J$ be ideals on $\mathbb N$ with $\mathcal I\subsetneq \mathcal J$ and $k$ be a positive integer. Then $\dim (c_{0,\mathcal J}/c_{0,\mathcal I})=k$ if and only if $\mathcal I$ is $k$-maximal in $\mathcal J$.
\end{theorem}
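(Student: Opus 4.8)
The plan is to route both implications through the combinatorial reformulation of Proposition~\ref{Carac-k-maximal-J}, which says that $\mathcal{I}$ is $k$-maximal in $\mathcal{J}$ exactly when a $(\mathcal{I},\mathcal{J},k)$-family exists. For the direction assuming $k$-maximality, I would fix such a family $\{A_1,\dots,A_k\}$ and prove $\dim(c_{0,\mathcal{J}}/c_{0,\mathcal{I}})\ge k$ and $\le k$ separately. Since the $A_i$ are pairwise disjoint and each lies in $\mathcal{J}\setminus\mathcal{I}\subseteq\mathcal{J}\cap\mathcal{I}^+$, they form an $\mathcal{I}$-AD family contained in $\mathcal{J}$, so Lemma~\ref{linear independence} gives at once that $\{\chi_{A_1}+c_{0,\mathcal{I}},\dots,\chi_{A_k}+c_{0,\mathcal{I}}\}$ is linearly independent, yielding the lower bound.

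For the upper bound I would show these $k$ cosets span the quotient. The key is condition~(4) of Definition~\ref{I-J-k family}: each $\mathcal{I}\restriction A_i$ is maximal on $A_i$, so $(\mathcal{I}\restriction A_i)^*$ is an ultrafilter on $A_i$ along which every bounded sequence has a limit. Given $\mathbf{x}=(x_n)\in c_{0,\mathcal{J}}$, I would set $\lambda_i=\lim_{(\mathcal{I}\restriction A_i)^*}x_n$ and verify that $\mathbf{x}-\sum_{i=1}^k\lambda_i\chi_{A_i}\in c_{0,\mathcal{I}}$. On each $A_i$ this holds because $\{n\in A_i:|x_n-\lambda_i|\ge\varepsilon\}\in\mathcal{I}\restriction A_i\subseteq\mathcal{I}$; off $\bigcup_i A_i$ it follows from condition~(3), since $A(\varepsilon,\mathbf{x})\in\mathcal{J}$ forces $A(\varepsilon,\mathbf{x})\setminus\bigcup_i A_i\in\mathcal{I}$ (the subcase $A(\varepsilon,\mathbf{x})\in\mathcal{I}$ being trivial). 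Hence $A(\varepsilon,\mathbf{x}-\sum_i\lambda_i\chi_{A_i})\in\mathcal{I}$ for every $\varepsilon>0$, so the difference lies in $c_{0,\mathcal{I}}$ and $\mathbf{x}+c_{0,\mathcal{I}}$ belongs to the span.

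For the converse I would extract the family from an extremal configuration rather than build it by hand. Assuming $\dim(c_{0,\mathcal{J}}/c_{0,\mathcal{I}})=k$, Lemma~\ref{linear independence} forces every pairwise disjoint family inside $\mathcal{J}\setminus\mathcal{I}$ to have at most $k$ members, so among such families there is one, say $\mathcal{A}=\{A_1,\dots,A_m\}$, of maximum cardinality $m\le k$ (and $m\ge1$ since $\mathcal{I}\subsetneq\mathcal{J}$). I then claim $\mathcal{A}$ is automatically a $(\mathcal{I},\mathcal{J},m)$-family: conditions~(1) and~(2) are built in, while~(4) and~(3) follow from maximality by a splitting/adjoining argument. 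If some $\mathcal{I}\restriction A_i$ were non-maximal, one could split $A_i$ into two disjoint pieces both outside $\mathcal{I}$ (hence both in $\mathcal{J}\setminus\mathcal{I}$), producing a disjoint family of size $m+1$; and if some $B\in\mathcal{J}\setminus\mathcal{I}$ had $B\setminus\bigcup_i A_i\notin\mathcal{I}$, adjoining this piece would again enlarge $\mathcal{A}$ — both contradicting maximality. By Proposition~\ref{Carac-k-maximal-J} this makes $\mathcal{I}$ $m$-maximal in $\mathcal{J}$, and applying the direction already proved gives $\dim(c_{0,\mathcal{J}}/c_{0,\mathcal{I}})=m$; comparison with the hypothesis forces $m=k$.

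The main obstacle I anticipate is precisely this converse step: checking that a maximum-size disjoint family genuinely satisfies the covering condition~(3) and the maximality condition~(4). The extremality of $\mathcal{A}$ is what powers the argument, but one must be careful that disjointness and membership in $\mathcal{J}\setminus\mathcal{I}$ are preserved under splitting and adjoining so that the enlarged families remain admissible witnesses. The spanning step in the forward direction needs only the routine fact that bounded sequences admit limits along $(\mathcal{I}\restriction A_i)^*$ and that these limits interact correctly with $\mathcal{I}$, which is immediate once condition~(4) is available.
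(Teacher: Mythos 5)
Your proof is correct, but it takes a genuinely different route from the paper's, most notably in the direction $\dim(c_{0,\mathcal J}/c_{0,\mathcal I})=k \Rightarrow k$-maximality. The paper proves that implication with Banach lattice machinery: it invokes an order isomorphism $\phi\colon c_{0,\mathcal J}/c_{0,\mathcal I}\to\mathbb R^k$ (citing Schaefer), pulls back the standard basis vectors through the lattice homomorphism $\Lambda$, extracts the sets $A_j=A(\varepsilon_j,\mathbf{y}_j)$ from positivity, and verifies the four conditions of Definition \ref{I-J-k family} using coordinate representations in the quotient. You instead run a purely combinatorial extremal argument: every pairwise disjoint family in $\mathcal J\setminus\mathcal I$ is an $\mathcal I$-AD family, so Lemma \ref{linear independence} bounds its size by $k$; a maximum-size family must satisfy conditions (3) and (4), since otherwise splitting a non-maximal $A_i$ or adjoining a leftover piece $B\setminus\bigcup_i A_i$ would enlarge it while staying disjoint and inside $\mathcal J\setminus\mathcal I$; then Proposition \ref{Carac-k-maximal-J} gives $m$-maximality, and the already-proved converse forces $m=k$. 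This avoids the order-isomorphism citation entirely, at the harmless cost of using the other implication as an ingredient (you prove it first and it does not depend on this one). For the direction $k$-maximality $\Rightarrow \dim=k$, the two arguments are close in spirit --- both rest on ultrafilter limits --- but the paper works with global maximal ideals $\mathcal L_i$ and the surjection $\Phi(\mathbf{x})=(\mathcal L_1^*-\lim\mathbf{x},\ldots,\mathcal L_k^*-\lim\mathbf{x})$ with kernel $c_{0,\mathcal I}$, whereas you localize to the sets $A_i$ of the $(\mathcal I,\mathcal J,k)$-family and exhibit the cosets $\chi_{A_i}+c_{0,\mathcal I}$ as an explicit basis, with spanning witnessed by subtracting $\sum_i\lambda_i\chi_{A_i}$ where $\lambda_i$ is the limit along $(\mathcal I\restriction A_i)^*$. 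Both routes are sound; yours is more elementary and makes the combinatorial characterization of Proposition \ref{Carac-k-maximal-J} the pivot of both directions, while the paper's lattice argument produces the family directly from the quotient structure without circling back through the converse.
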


\begin{proof}
Suppose that $\dim (c_{0,\mathcal J}/c_{0,\mathcal I})=k$. By \cite[Corollary 1, p. 70]{schaeffer}, there is an order isomorphism $\phi\colon c_{0,\mathcal J}/c_{0,\mathcal I}\to\mathbb R^k$. Let $\Lambda\colon c_{0,\mathcal J}\to\mathbb R^k$ be given by $\Lambda({\bf y})=\phi({\bf y}+c_{0,\mathcal I})$ if ${\bf y}\in c_{0,\mathcal J}$. Notice that $\Lambda$ is an onto Banach lattice homomorphism. For each $j\in\{1,\ldots,k\}$, let ${\bf y}_j\in c_{0,\mathcal J}\setminus c_{0,\mathcal I}$ be such that $\Lambda({\bf y}_j)=e_j$. Fix $j\in\{1,\ldots,k\}$ and let $\varepsilon_j>0$ be such that $A_j:=A(\varepsilon_j,{\bf y}_j)\not\in\mathcal I$. So, $\varepsilon_j\chi_{A_j}\leq{\bf y}_j$. Since $\Lambda$ is order preserving, we conclude that $\Lambda(\chi_{A_j})=a_je_j$ for some $a_j>0$. Notice that if $i,j\in\{1,\ldots,k\}$ and $i\neq j$, we have $A_i\cap A_j\in\mathcal I$ because of $\Lambda(\chi_{A_i\cap A_j})=\Lambda(\chi_{A_i}\wedge\chi_{A_j})=a_ie_i\wedge a_je_j={\bf0}$. Thus,
$\{A_1,\ldots,A_k\}$ is a $\mathcal I$-AD family.  As in the proof of Lemma \ref{linear independence}, we will assume that the family is pairwise disjoint.
From Lemma \ref{linear independence} we obtain that $\{\chi_{A_j}+c_{0,\mathcal I}\,\colon\,1\leq j\leq k\}$ is a basis for $c_{0,\mathcal J}/c_{0,\mathcal I}$. We claim that $\{A_1,\ldots,A_k\}$ is a $(\mathcal I,\mathcal J,k)$-family:

\begin{enumerate}
\item Clearly, $A_i\in\mathcal J\setminus\mathcal I$ for all $i\in\{1,\ldots,k\}$.

\item $\{A_j: 1\leq j\leq k\}$ is pairwise disjoint by construction.

\item Let $B\in\mathcal J\setminus\mathcal I$ be given and $D\coloneqq B\setminus(A_1\cup\cdots\cup A_k)$. Then there are $\alpha_1,\ldots,\alpha_k\in\mathbb R$ such that ${\bf z}=\chi_D-\sum_{1\leq j\leq k}\alpha_j\chi_{A_j}\in c_{0,\mathcal I}$. Whence, $D\subseteq A(1/2,{\bf z})\in\mathcal I$.
    
\item Fix $1\leq i\leq k$ and let $B\subseteq A_i$.
Since $B\in\mathcal J$, there exist $\beta_1,\ldots,\beta_k\in\mathbb R$ such that ${\bf w}=\chi_{B}-\sum_{1\leq j\leq k}\beta_j\chi_{A_j}\in c_{0,\mathcal I}$. If $\beta_i=0$, then $B\subseteq A(1/2,{\bf w})\in\mathcal I$. If $\beta_i\neq0$, then $A_i\setminus B\subseteq A(|\beta_i|,{\bf w})\in\mathcal I$. Thus, $\mathcal I\restriction A_i$ is maximal on $A_i$.
\end{enumerate}

Conversely, suppose that $\mathcal I$ is $k$-maximal in $\mathcal J$ and let $\mathcal L_1,\ldots,\mathcal L_k$ be maximal ideals on $\mathbb N$ such that $\mathcal I=(\bigcap_{1\leq i\leq k}\mathcal L_j)\cap\mathcal J$. Define $\Phi\colon c_{0,\mathcal J}\to\mathbb R^k$ by $\Phi({\bf x})=(\mathcal L_1^*-\lim{\bf x},\ldots,\mathcal L_k^*-\lim{\bf x})$ if ${\bf x}\in c_{0,\mathcal J}$. Notice that $\Phi$ is linear and continuous.

We claim that $\ker\Phi=c_{0,\mathcal I}$. If $\Phi({\bf x})={\bf0}$, then $\mathcal L_j^*-\lim{\bf x}=0$ for all $1\leq j\leq k$. Thus, $A(\varepsilon,{\bf x})\in (\bigcap_{1\leq i\leq k}\mathcal L_j)\cap\mathcal J=\mathcal I$ for each $\varepsilon>0$. Whence, $\ker\Phi\subset c_{0,\mathcal I}$. The converse is clear. 

Finally, we show that $\Phi$ is onto. Indeed, there are $A_1,\ldots,A_k\in\mathcal J$ satisfying $A_i\in\mathcal L_i^+\cap(\bigcap_{1\leq j\neq i\leq k}\mathcal L_j)$ for all $i\in\{1,\ldots,k\}$. So, $\Phi(\chi_{A_i})=e_i$ for each $1\leq i\leq k$. Therefore, $\Phi$ is onto and we conclude that $\dim c_{0,\mathcal J}/c_{0,\mathcal I}=k$.
\end{proof}

From Theorem \ref{I+particion} we obtain the following result.

\begin{corollary}
\label{dim of l/c_I and k-maximal}
Let $\mathcal I$ be a proper ideal on a set $\mathbb N$ and $k$ be a positive integer. Then $\dim(\ell_\infty/c_{0,\mathcal I})=k$ if and only if $\mathcal I$ is  the intersection of exactly $k$ maximal ideals on $\mathbb N$.  
\end{corollary}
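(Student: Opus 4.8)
The plan is to deduce this directly from Theorem \ref{I+particion} by specializing to $\mathcal J=\mathcal P(\mathbb N)$. First I would record the identification $\ell_\infty=c_{0,\mathcal P(\mathbb N)}$: for the trivial ideal every subset of $\mathbb N$, and in particular each set $A(\varepsilon,{\bf x})$, belongs to $\mathcal P(\mathbb N)$, so every bounded sequence $\mathcal P(\mathbb N)$-converges to $0$ and the two spaces coincide. Since $\mathcal I$ is proper we have $\mathcal I\subsetneq\mathcal P(\mathbb N)$, so Theorem \ref{I+particion} applies and gives
\[
\dim(\ell_\infty/c_{0,\mathcal I})=\dim\bigl(c_{0,\mathcal P(\mathbb N)}/c_{0,\mathcal I}\bigr)=k \iff \mathcal I \text{ is } k\text{-maximal in } \mathcal P(\mathbb N).
\]

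Next I would unwind Definition \ref{k-maximal def} in the case $\mathcal J=\mathcal P(\mathbb N)$. The requirement $\mathcal J\not\subseteq\mathcal L_i$ is automatic, because each $\mathcal L_i$ is a proper ideal and hence $\mathbb N\notin\mathcal L_i$, so $\mathcal P(\mathbb N)\not\subseteq\mathcal L_i$. Moreover $\bigl(\bigcap_{i=1}^k\mathcal L_i\bigr)\cap\mathcal P(\mathbb N)=\bigcap_{i=1}^k\mathcal L_i$, so the condition $\mathcal I=\bigl(\bigcap_{i=1}^k\mathcal L_i\bigr)\cap\mathcal J$ reduces to $\mathcal I=\bigcap_{i=1}^k\mathcal L_i$. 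Thus $\mathcal I$ is $k$-maximal in $\mathcal P(\mathbb N)$ precisely when $\mathcal I=\bigcap_{i=1}^k\mathcal L_i$ for some pairwise distinct maximal ideals $\mathcal L_1,\ldots,\mathcal L_k$, i.e.\ when $\mathcal I$ is the intersection of exactly $k$ maximal ideals on $\mathbb N$. Chaining this with the displayed equivalence yields the corollary.

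The only genuinely delicate point is the word \emph{exactly} in the statement, that is, reconciling Definition \ref{k-maximal def} (mere existence of a pairwise distinct family of size $k$) with the minimal-representation meaning of $k$-maximality from Section \ref{kappa-max}. This is settled by uniqueness of dimension: since $\dim(\ell_\infty/c_{0,\mathcal I})$ is a single well-defined integer, no proper ideal can be simultaneously $k$-maximal and $k'$-maximal in $\mathcal P(\mathbb N)$ for $k\neq k'$. In particular, any representation of $\mathcal I$ as an intersection of $k$ pairwise distinct maximal ideals forces $k$ to equal the integer singled out by the dimension, which coincides with the minimal $\kappa$ of the Section \ref{kappa-max} definition. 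Consequently there is no real obstacle beyond verifying these identifications, and the corollary follows at once from Theorem \ref{I+particion}.
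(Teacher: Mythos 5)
Your proposal is correct and takes essentially the same route as the paper, which offers no separate argument at all beyond the sentence ``From Theorem \ref{I+particion} we obtain the following result,'' i.e.\ precisely your specialization $\mathcal J=\mathcal P(\mathbb N)$ with the identification $\ell_\infty=c_{0,\mathcal P(\mathbb N)}$. Your dimension-uniqueness justification of the word \emph{exactly} is a welcome explicit filling-in of a detail the paper only asserts implicitly (in the remark after Definition \ref{k-maximal def}, identifying $k$-maximality in the trivial ideal with the minimal-representation notion of Section \ref{kappa-max}).
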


\section{Structural properties of complemented ideals}
\label{complementacion}

 In this section, we study the structural properties of complemented ideals $\ideal$, that is, ideals for which the Banach space $c_{0,\mathcal I}$ is complemented in $\ell_\infty$. We recall that a classical theorem  of Lindenstrauss \cite{lindenstrauss} says that a closed subspace $X\subseteq \ell_\infty$ is complemented if and only if $X$ is isomorphic to $\ell_\infty$.
As noted by Kania in \cite{kania},  the intersection of finitely many maximal ideals is complemented (for a proof, see  \cite[Proposisition 5.20]{rincon-uzcategui}). Therefore, if $\mathcal I$ is such an ideal, then $c_{0,\mathcal I}$ is isomorphic to $\ell_\infty$. Recently, Hru\v{s}\'ak and S\'aenz characterized the complementation of $c_{0,\mathcal I}$ in terms of the quotient $\ell_\infty/c_{0,\mathcal I}$ \cite{hrusak-saenz2025}. Before stating their result, we need to introduce some notation.

For an ideal $\mathcal I$, let $K_{\mathcal I}=\{p\in\beta\mathbb N\,\colon\,\mathcal I^*\subseteq p\}$. It is known that $K_\mathcal I$ is a compact Hausdorff space, since it is a closed subset of $\beta\mathbb N$. 
Moreover, the map 
\begin{align*}
    \Psi\colon\ell_\infty/c_{0,\mathcal I}&\to C(K_\mathcal I)\\
    {\bf x}+c_{0,\mathcal I}&\mapsto\Psi({\bf x}+c_{0\mathcal I})\,\colon\, p\in K_\mathcal I\mapsto p-\lim{\bf x}
\end{align*}
is a Banach lattice isometry. Therefore, we have the following:

\begin{theorem}
\label{kania}(Kania \cite{kania})
$\ell_\infty/c_{0,\mathcal I}$ is Banach lattice isometric to $ C(K_\mathcal I)$.
\end{theorem}

\begin{theorem} (Hru\v{s}\'ak and S\'aenz \cite{hrusak-saenz2025}).
\label{teorema michael-saens}
    Let $\mathcal I$ be an ideal on $\mathbb N$. Then, the following statements are equivalent:
    \begin{enumerate}
        \item $\mathcal I$ is complemented.
        \item $\ell_\infty/c_{0,\mathcal I}$ is isomorphic to a subspace of $\ell_\infty$. 
        \item $\ell_\infty/c_{0,\mathcal I}$ is isometric to a subspace of $\ell_\infty$. 
        \item $C(K_\mathcal I)$ is isomorphic to a subspace of $\ell_\infty$. 
    \end{enumerate}
\end{theorem}

\begin{corollary}\label{complementacion es equivalente a operador con kernel igual al ideal}
    Let $\mathcal I$ be an ideal on $\mathbb N$. Then, $\mathcal I$ is complemented if and only if $c_{0,\mathcal I}$ is the kernel of a continuous operator from $\ell_\infty$ to $\ell_\infty$.
\end{corollary}

\begin{proof}
 If $\mathcal I$ is complemented, then $c_{0,\mathcal I}$ is a kernel of an operator from $\ell_\infty$ to $\ell_\infty$. Conversely, let $Q\colon\ell_\infty\to\ell_\infty$ be a continuous operator such that $\ker Q=c_{0,\mathcal I}$. Then, the map $\hat Q\colon\ell_\infty/c_{0,\mathcal I}\to\ell_\infty$ defined by ${\bf x}+c_{0,\mathcal I}\mapsto Q({\bf x})$, is an embedding. By Theorem \ref{teorema michael-saens}, $\mathcal I$ is complemented.
\end{proof}

Recall that if $(X_j)_{j\in\mathbb N}$ is a sequence of Banach spaces,
the space  $\ell_\infty((X_j)_{j\in\mathbb N})$ denotes their $\ell_\infty$-sum, i.e., the Banach space of all bounded sequences $\displaystyle(x_j)\in\prod_jX_j$, equipped with the norm $\|\cdot\|$ given by $\displaystyle\|(x_j)\|=\sup_j\|x_j\|$. 

\begin{remark}\label{l_infty-sums}
    Let $(X_n)_{n\in\mathbb N}$ and $(Y_n)_{n\in\mathbb N}$ be sequences of Banach spaces (lattices) and assume that $T_n\colon X_n\to Y_n$ is an isomorphism (Banach lattice isomorphism) for each $n\in\mathbb N$. Then, $\ell_\infty((X_n)_{n\in\mathbb N})$ is isomorphic (Banach lattice isomorphic) to $\ell_\infty((Y_n)_{n\in\mathbb N})$
    via the map $T\colon(x_n)\mapsto(T_n(x_n))$. In particular, if $\{K_n\,\colon\,n\in\mathbb N\}$ is a partition of $\mathbb N$, then $\ell_\infty((\ell_\infty(K_m))_{m\in\mathbb N})$ is isomorphic to $\ell_\infty$, since $\ell_\infty(K_n)\sim\ell_\infty$ for each $n\in\mathbb N$ and $\ell_\infty(\ell_\infty)\sim\ell_\infty$.
\end{remark}

Our first result about complemented ideals is based on Lindenstrauss's theorem and a result from \cite{rincon-uzcategui}.

\begin{theorem}
\label{comple-discrete}
Let $\{K_n\,\colon\,n\in\mathbb N\}$ be a partition of $\mathbb N$ and let $\mathcal I_n$ be an ideal on $K_n$ for each $n\in\mathbb N$. Suppose that
$c_{0,\mathcal I_n}$ is complemented in $\ell_\infty(K_n)$ for each $n\in\mathbb N$. Then, the ideal $\mathcal J=\bigoplus_{n\in\mathbb N}\mathcal I_n$ is complemented. 
\end{theorem}

\begin{proof}
By the aforementioned Lindenstrauss's theorem, $c_{0,\mathcal I_m}$ is isomorphic to $\ell_\infty(K_m)$ for each $m\in\mathbb N$. From \cite[Theorem 5.2]{rincon-uzcategui} and Remark \ref{l_infty-sums} it follows that
 \begin{gather*}
     c_{0,\mathcal J}\cong\ell_\infty((c_{0,\mathcal I_m})_{m\in\mathbb N})\sim\ell_\infty((\ell_\infty(K_m))_{m\in\mathbb N})\sim\ell_\infty.
 \end{gather*}
Again,  by Lindenstrauss's theorem,   we conclude that $c_{0,\mathcal J}$ is complemented in $\ell_\infty$. 
\end{proof}

\begin{corollary}
    Let $\mathcal I$ be an ideal on $\mathbb N$. Then $\mathcal I$ is complemented if and only if $\mathcal I^\omega$ is complemented.
\end{corollary}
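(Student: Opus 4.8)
The plan is to translate complementation into an isomorphism with $\ell_\infty$ via Lindenstrauss's theorem and to exploit the decomposition $c_{0,\mathcal I^\omega}\cong\ell_\infty((c_{0,\mathcal I})_{n\in\mathbb N})$ supplied by \cite[Theorem 5.2]{rincon-uzcategui}. Concretely, I would fix a partition $\{K_n:n\in\mathbb N\}$ of the underlying set witnessing $\mathcal I^\omega=\bigoplus_n\mathcal I_n$, where each $\mathcal I_n$ is an ideal on $K_n$ isomorphic to $\mathcal I$; then $c_{0,\mathcal I_n}\sim c_{0,\mathcal I}$ for every $n$, and the quoted theorem identifies $c_{0,\mathcal I^\omega}$ with the $\ell_\infty$-sum of the spaces $c_{0,\mathcal I_n}$.

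The forward implication is essentially already contained in Theorem \ref{comple-discrete}: if $\mathcal I$ is complemented, then each $c_{0,\mathcal I_n}$ is complemented in $\ell_\infty(K_n)$ (complementation is an isomorphic invariant and $\mathcal I_n\cong\mathcal I$), and that theorem, applied to the partition $\{K_n\}$, yields that $\mathcal I^\omega=\bigoplus_n\mathcal I_n$ is complemented.

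For the converse, the idea is that a single summand is $1$-complemented inside the $\ell_\infty$-sum, and then one invokes Lindenstrauss's theorem twice. The coordinate map $(x_n)_n\mapsto(x_1,\mathbf{0},\mathbf{0},\dots)$ is a norm-one projection of $\ell_\infty((c_{0,\mathcal I})_{n})$ onto its first summand, which is isometric to $c_{0,\mathcal I}$. Assuming $\mathcal I^\omega$ is complemented, Lindenstrauss's theorem gives $c_{0,\mathcal I^\omega}\sim\ell_\infty$, hence $\ell_\infty((c_{0,\mathcal I})_n)\sim\ell_\infty$; composing this isomorphism with the coordinate projection exhibits $c_{0,\mathcal I}$ as (isomorphic to) a complemented subspace of $\ell_\infty$. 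By Lindenstrauss's theorem this forces $c_{0,\mathcal I}\sim\ell_\infty$, and, applying the same theorem once more to the closed subspace $c_{0,\mathcal I}\subseteq\ell_\infty$, we conclude that $c_{0,\mathcal I}$ is complemented in $\ell_\infty$, i.e. $\mathcal I$ is complemented.

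The only genuinely delicate point is this converse: passing from ``the countable $\ell_\infty$-sum is isomorphic to $\ell_\infty$'' to ``the single space $c_{0,\mathcal I}$ is isomorphic to $\ell_\infty$'' is false for arbitrary Banach spaces. What makes it work here is the combination of the evident $1$-complementation of each summand with Lindenstrauss's characterization of complemented subspaces of $\ell_\infty$, which upgrades ``complemented subspace of $\ell_\infty$'' to ``isomorphic to $\ell_\infty$.''
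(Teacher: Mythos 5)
Your proposal is correct and follows essentially the same route as the paper: the forward direction via Theorem \ref{comple-discrete}, and the converse via the decomposition $c_{0,\mathcal I^\omega}\cong\ell_\infty((c_{0,\mathcal I})_n)$ from \cite[Theorem 5.2]{rincon-uzcategui}, the coordinate projection onto one summand, and two applications of Lindenstrauss's theorem. The only difference is presentational: you spell out explicitly the double use of Lindenstrauss's theorem that the paper leaves implicit in its final sentence.
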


\begin{proof}
 It follows from Theorem \ref{comple-discrete} that if $\mathcal I$ is complemented, then $\mathcal I^\omega$ is complemented. Conversely, suppose that
 $\mathcal I^\omega$ is complemented. By \cite[Theorem 5.2]{rincon-uzcategui} we have $c_{0,\mathcal I^\omega}\cong\ell_\infty(c_{0,\mathcal I})$. Consequently,
 $c_{0,\mathcal I}$ is isomorphic to a complemented subspace of $c_{0,\mathcal I^\omega}$. On the other hand, by Lindenstrauss's theorem,  $c_{0,\mathcal I^\omega}\sim\ell_\infty$.  Hence,  $c_{0,\mathcal I}$ is isomorphic to a complemented subspace of $\ell_\infty$. Therefore, by Lindenstrauss's theorem, $c_{0,\mathcal I}$ is isomorphic to $\ell_\infty$, and thus $\mathcal I$ is complemented.
\end{proof}

\begin{theorem}\label{intersection of complemented ideals is complemented}
Let $\{\mathcal I_n\,\colon\,n\in\mathbb N\}$ be a collection of complemented ideals. Then, $\mathcal I=\bigcap_{n\in\mathbb N}\mathcal I_n$ is complemented. In particular, every $\omega$-maximal ideal is complemented.
\end{theorem}

\begin{proof}
  Let $A\colon\ell_\infty\to\ell_\infty((\ell_\infty/c_{0,\mathcal I_n})_{n\in\mathbb N})$ be defined by $A({\bf x})=({\bf x}+c_{0,\mathcal I_n})$, ${\bf x}\in\ell_\infty$. Then $A$ is linear and continuous, and $\ker A=c_{0,\mathcal I}$. Hence, $\ell_\infty/c_{0,\mathcal I}$ is isomorphic to a subspace of $\ell_\infty((\ell_\infty/c_{0,\mathcal I_n})_{n\in\mathbb N})$. 
  
  On the other hand, since $\mathcal I_n$ is complemented for each $n\in\mathbb N$, it follows from Theorem \ref{teorema michael-saens}, that $\ell_\infty/c_{0,\mathcal I_n}$ is isomorphic to $\ell_\infty$ for every $n\in\mathbb N$. Therefore, $\ell_\infty((\ell_\infty/c_{0,\mathcal I_n})_{n\in\mathbb N})\sim\ell_\infty(\ell_\infty)\sim\ell_\infty$. Consequently, $\ell_\infty/c_{0,\mathcal I}$ is isomorphic to a subspace of $\ell_\infty$ and $\mathcal I$ is complemented.
\end{proof}

\begin{remark}
   Example 2 in \cite{hrusak-saenz2025} shows that  the second part of Theorem~\ref{intersection of complemented ideals is complemented} is not an equivalence.
\end{remark}

 We say that a proper ideal $\mathcal I$ on a set $A\subseteq\mathbb N$ is  {\em complemented in $A$} if $c_{0,\mathcal I}$ is complemented in $\ell_\infty(A)$ (recall that $c_{0,\mathcal I}$ consists of sequences of the form $(x_n)_{n\in A}$). The next result shows that the complementation of ideals is preserved by taking restrictions.

\begin{proposition}\label{restricciones complementados}
Let $\mathcal I$ be an ideal on $\mathbb N$, and let $A$ and $B$ be infinite subsets of $\mathbb N$ with $A\subseteq B$ and $A\in\mathcal I^+$. If $\mathcal I\restriction B$ is complemented in $B$, then $\mathcal I\restriction A$ is complemented in $A$. In particular, if $\mathcal I$ is complemented, then $\mathcal I\restriction A$ is complemented in $A$ for every $A\in\mathcal I^+$.
\end{proposition}

\begin{proof}
   Since $c_{0,\mathcal I\restriction B}$ is complemented in $\ell_\infty(B)$, it suffices to prove that $c_{0,\mathcal I\restriction A}$ is isometric to a complemented subspace of $c_{0,\mathcal I\restriction B}$.
  To this end, consider the map $ \varphi\colon c_{0,\mathcal I\restriction A}\to c_{0,\mathcal I\restriction B}$ defined by
  \begin{gather*}
      (x_n)\mapsto(\tilde{x}_n),\quad\mbox{where}\quad \tilde{x}_n=\begin{cases}
           x_n, & n\in A;\\
           0, & n\in B\setminus A.
       \end{cases}
  \end{gather*}
This mapping is an isometry. Furthermore, the map $ P\colon c_{0,\mathcal I\restriction B}\to\varphi(c_{0,\mathcal I\restriction A})$ given by
  \begin{gather*}
          (x_n)\mapsto(y_n),\quad\mbox{where}\quad y_n=\begin{cases}
           x_n, & n\in A;\\
           0, & n\in B\setminus A,
       \end{cases}
  \end{gather*}
defines a projection, completing the proof.
\end{proof}

We will show that, under an additional condition, the complementation of $\mathcal I$ is equivalent to $\ideal$ being $\omega$-maximal. For that end, we need  a key lemma whose argument is motivated by the proof of  the non-complementation of $c_0$ in $\ell_\infty$ due to Whitley (see \cite[Theorem 5.6]{fabian et al} or \cite[Theorem 2.5.4]{AK}).

\begin{lemma}
\label{esquema2}
Let $\mathcal I\subsetneq \idealj$ be  ideals on $\mathbb N$. Suppose that there is a linear bounded operator $Q\colon c_{0,\mathcal J}\to c_{0,\mathcal J}$ such that $\ker Q=c_{0,\mathcal I}$. Then, there exists a collection $\{\mathcal A_{n,k}\colon\,n,k\in\mathbb N\}$ of subsets of $2^\mathbb N$ satisfying:
\begin{enumerate}
\item $\mathcal A_{n,k}\subseteq \idealj\cap  \mathcal I^+$ for each $n,k\in\mathbb N$. 

\item $\mathcal A_{n,k}\subseteq\mathcal A_{n,k+1}$ for all $n,k\in\mathbb N$.

\item There is $M>0$ such that $\mathfrak{ad}_\ideal(\mathcal A_{n,k})\leq kM $ for all $n,k\in\mathbb N$. 

\item For each $n\in\mathbb N$, let $\mathcal{A}_n=\bigcup_{k\in\mathbb N}\mathcal A_{n,k}$. Then $\idealj \cap \mathcal I^+=\bigcup_{n\in\mathbb N}\mathcal A_{n}$.

\item  If $A\in\mathcal A_{n,k}$ and $A=B\cup C$ with $B\cap C\not\in\mathcal{A}_n$, then either $B\in\mathcal A_{n,2k}$ or $C\in\mathcal A_{n,2k}$. In particular, if $A\in\mathcal A_{n}$ and $A=B\cup C$ with $B\cap C\not\in\mathcal{A}_n$, then either $B\in\mathcal A_{n}$ or $C\in\mathcal A_{n}$.
\end{enumerate}
\end{lemma}

\begin{proof}
 For each $n,k\in\mathbb N$, define 
\begin{gather*}
    \mathcal A_{n,k}=\{A\in \idealj\,\colon\,|Q(\chi_A)_n|\geq 1/k\}.
\end{gather*}
Since $\mathcal I=\{A\in \idealj \,\colon\,Q(\chi_A)={\bf0}\}$, we have
    \begin{align*}
         \idealj\cap \mathcal I^+&=\{A\in  \idealj\,\colon\,Q(\chi_A)\neq{\bf0}\}\\
         &=\bigcup_{k,n\in\mathbb N}\{A\in \idealj\,\colon\,|Q(\chi_A)_n|\geq1/k\}\\
         &=\bigcup_{n\in\mathbb N}\mathcal A_{n}.
    \end{align*}
Then (1) and (4) hold. (2) is obvious. To see (3), let $m\in\mathbb N$ be given and $A_1,\ldots,A_m\in\mathcal A_{n,k}$ be such that $A_i\cap A_j\in\mathcal I$ for $i\neq j$. If $F_1=A_1$ and $F_j=A_j\setminus(A_1\cup\cdots\cup A_{j-1})$ for $2\leq j\leq m$, then $Q(\chi_{A_j})=Q(\chi_{F_j})$ for all $1\leq j\leq m$. For $1\leq j\leq m$, let $a_j\in\mathbb K$ be such that    $a_jQ(\chi_{F_j})_n=|Q(\chi_{F_j})_n|$. Thus, 
      \begin{align*}
        \frac{m}{k}\leq\sum_{j=1}^m|Q(\chi_{A_j})_n|&=\sum_{j=1}^m|Q(\chi_{F_j})_n|\\
       &=\sum_{j=1}^ma_jQ(\chi_{F_j})_n\\
       &\leq\left\|\sum_{j=1}^ma_jQ(\chi_{F_j})\right\|\\
       &\leq\|Q\|\left\|\sum_{j=1}^ma_j\chi_{F_j}\right\|=\|Q\|.
    \end{align*}
    Hence, $m\leq k\|Q\|$. 

Finally, we show (5). Let $A\in\mathcal A_{n,k}$ and $A=B\cup C$ with $B\cap C\not\in\mathcal{A}_n$. Then $|Q(\chi_{B\cap C})_n|=0$. Assume that $|Q(\chi_B)_n|<1/2k$ and $|Q(\chi_C)_n|<1/2k$. Note that $Q(\chi_B)_n= Q(\chi_{B\setminus C})_n$ and $Q(\chi_C)_n=Q(\chi_{C\setminus B})_n$. Thus
\begin{gather*}
    |Q(\chi_A)_n|=|Q(\chi_{A\setminus B\cap C})_n|=|Q(\chi_B)_n+Q(\chi_C)_n|<1/k,
\end{gather*}
which is absurd.
\end{proof}

Now we prove, with an extra assumption, that the complementation of $c_{0,\mathcal I}$ is equivalent to $\ideal$ being $\omega$-maximal.

\begin{theorem}
\label{implicaciones}
Let $\ideal$ be a proper ideal on $\N$. Then, the following statements are equivalent:
\begin{enumerate}
\item $\mathcal I$ is $\omega$-maximal.
\item There is a Banach lattice isomorphism from $\ell_\infty/c_{0,\mathcal I}$ into $\ell_\infty$.
 \item There exists a bounded linear operator  $Q\colon\ell_\infty\to\ell_\infty$ with $\ker Q=c_{0,\mathcal I}$ that verifies the next property: for every pair of subsets $A,B\subseteq\N$ and every $n\in \N$,
    \begin{equation}\label{Q-mult}
Q(\chi_A)_n\neq0\quad\mbox{and}\quad Q(\chi_B)_n\neq0\;\Rightarrow \; Q(\chi_{A\cap B})_n\neq0. 
\end{equation}
\end{enumerate}
\end{theorem}

\proof 
$(1)\Rightarrow(2)$  Notice that $\ell_\infty/c_{0,\mathcal I_n}$ is Banach lattice isomorphic to $\mathbb R$ for each $n\in\mathbb N$ via the map ${\bf x}+c_{0,\mathcal I_n}\mapsto\mathcal I_n-\lim{\bf x}$. Hence, $\ell_\infty((\ell_\infty/c_{0,\mathcal I_n})_{n\in\mathbb N})$ is Banach lattice isomorphic to $\ell_\infty$. Now the map $L\colon\ell_\infty/c_{0,\mathcal I}\to\ell_\infty((\ell_\infty/c_{0,\mathcal I_n})_{n\in\mathbb N})$, ${\bf x}+c_{0,\mathcal I}\mapsto({\bf x}+c_{0,\mathcal I_n})_{n\in\mathbb N}$ is a Banach lattice embedding. Thus, there is a Banach lattice isomorphism from $\ell_\infty/c_{0,\mathcal I}$ into $\ell_\infty$.

\medskip

$(2)\Rightarrow(3)$  Let $T\colon\ell_\infty/c_{0,\mathcal I}\to\ell_\infty$ be a  Banach lattice embedding. Define $Q\colon\ell_\infty\to\ell_\infty$ by 
$Q({\bf x})=T({\bf x}+c_{0,\mathcal I})$ for each ${\bf x}\in\ell_\infty$. It is clear that $Q$ is linear and bounded. Since $T$ is injective, we have:
\begin{equation*}
    Q({\bf x})={\bf 0}\Longleftrightarrow T({\bf x}+c_{0,\mathcal I})={\bf 0}\Longleftrightarrow {\bf x}\in c_{0,\mathcal I}.
\end{equation*}
Thus, $\ker Q=c_{0,\mathcal I}$.

We will verify \eqref{Q-mult}. 
Let $n\in\mathbb N$ and $A,B\subseteq\mathbb N$ be such that $Q(\chi_A)_n\neq0$ and $Q(\chi_B)_n\neq0$. Observe that $Q(\chi_A)=T(\chi_A+c_{0,\mathcal I})\geq{\bf0}$ and
$Q(\chi_B)=T(\chi_B+c_{0,\mathcal I})\geq{\bf0}$. Hence, $Q(\chi_A)_n=T(\chi_A+c_{0,\mathcal I})_n>0$ and $Q(\chi_B)_n=T(\chi_B+c_{0,\mathcal I})_n>0$. From the fact $(\chi_A+c_{0,\mathcal I})\wedge(\chi_B+c_{0,\mathcal I})=\chi_{A\cap B}+c_{0,\mathcal I}$, it follows that
$T(\chi_{A\cap B}+c_{0,\mathcal I})=T(\chi_A+c_{0,\mathcal I})\wedge T(\chi_A+c_{0,\mathcal I})$ and 
\begin{equation*}
    Q(\chi_{A\cap B})_n=T(\chi_{A\cap B}+c_{0,\mathcal I})_n=T(\chi_A+c_{0,\mathcal I})_n\wedge T(\chi_A+c_{0,\mathcal I})_n>0.
\end{equation*}
Thus, $Q(\chi_{A\cap B})_n\neq0$.

\medskip

$(3)\Rightarrow(1)$ Let $Q\colon\ell_\infty\to c_{0,\mathcal I}$ be a bounded linear operator satisfying \eqref{Q-mult}. For each $n\in\mathbb N$, consider the family 
$$
\mathcal{A}_n=\{A\subseteq \N: \; Q(\chi_A)_n\neq 0\}
$$
as in Lemma \ref{esquema2} with $\idealj=\mathcal P (\N)$. Observe that $\ideal^+=\bigcup_{n\in\mathbb N} \mathcal{A}_n$ by Lemma \ref{esquema2}. For each $n\in\mathbb N$, let  
\begin{gather*}
    \mathcal{H}_n=\{B\subseteq \N:(\exists\, A\in \mathcal{A}_n)(A\subseteq B)\}.
\end{gather*}
Notice that $\mathcal{H}_n\subseteq \ideal^+$ for all $n$. Indeed, suppose not and let $B\in\mathcal H_n$ with $B\not\in\mathcal I^+$.  Let $A\in\mathcal A_n$ be such that $A\subseteq B$. Since $B\in\mathcal I$, $A\in\mathcal I$ and thus $Q(\chi_A)={\bf0}$, which is absurd. 

We claim that $\mathcal I_n\coloneqq 2^{\N}\setminus \mathcal{H}_n$ is an ideal. By construction, $\mathcal I_n$ is closed by taking subsets. Now, let $B,C\in\mathcal I_n$ and suppose that $B\cup C\not\in\mathcal I_n$, that is, $B\cup C\in\mathcal H_n$.  Let $A\in\mathcal A_n$ be such that $A\subseteq B\cup C.$ Since $B\cap C\not\in \mathcal{A}_n$, by (5) of Lemma \ref{esquema2}, we have either $A\cap B\in\mathcal A_n$ or $A\cap C\in\mathcal A_n$. It follows that either $B\in\mathcal H_n$ or $C\in\mathcal H_n$, a contradiction. So, $\mathcal I_n$ is an ideal. Now as $\mathcal I$ is proper, $\mathbb N\not\in\mathcal I$. Thus, 
$S=\{n\in\mathbb N\,\colon\,Q(\chi_{\mathbb N})_n\neq0\}$ is non-empty. Observe that $\mathbb N\in\mathcal A_n$ if and only if $n\in S$. Therefore, $\mathcal I_n\neq\mathcal P(\mathbb N)$ if and only if $n\in S$.

By (4) of Lemma \ref{esquema2}, $\ideal=\bigcap_n \ideal_n=\bigcap_{n\in S}\mathcal I_n$. It remains to show that each $\ideal_n$ is maximal for each $n\in S$. If not, let $B\subseteq \N$ be such that  $B\not\in\ideal_n$ and $\N\setminus B\not\in \ideal_n$. Then  there are $C_0,C_1\in \mathcal{A}_n$ such that $C_0\subseteq B$ and $C_1\subseteq \N\setminus B$. So, $Q(\chi_{C_0\cap C_1})={\bf0}$. On the other hand, by \eqref{Q-mult} we have $Q(\chi_{C_0\cap C_1})_n\neq 0$, an absurd.  This completes the proof.
\endproof

\begin{remark}
The Property \eqref{Q-mult} cannot be omitted. Indeed, by Corollary \ref{complementacion es equivalente a operador con kernel igual al ideal}, the existence of a bounded linear operator $Q$ from $\ell_\infty$ to $\ell_\infty$ with $\ker Q=c_{0,\mathcal I}$ implies that $\mathcal I$ is complemented. However, \cite[Example 2]{hrusak-saenz2025} shows  a complemented ideal which is not $\omega$-maximal. 
\end{remark}

\medskip

Observe that if $\mathcal I$ is a strongly $\omega$-maximal ideal, then  $\mathcal I$ is complemented by Theorem \ref{intersection of complemented ideals is complemented}. Our next result provides an alternative proof by constructing an explicit projection. To this end, we first establish an auxiliary result. Recall that if ${\bf x}=(x_n)\in\ell_\infty$, its support is defined as
$\mathrm{supp}({\bf x})\coloneqq\{n\in\mathbb N\,\colon\,x_n\neq0\}$. Let $F\subseteq\mathbb N$, and suppose that $({\bf x}_n)_{n\in F}$ is a sequence in $\ell_\infty$ such that $\mathrm{supp}({\bf x}_n)\cap\mathrm{supp}({\bf x}_n)=\emptyset$ for all $m\neq n$. Then, the sum $\sum_{n\in F}{\bf x}_n$ is the sequence in $\ell_\infty$ whose $m$-th coordinate is ${\bf x}_n(m)$ whenever $m\in\mathrm{supp}({\bf x}_n)$ for some $n\in F$, and 0 otherwise. Observe that the previous sum is well defined since the supports are pairwise disjoint.

\begin{lemma}
\label{construccion de la proyeccion}
Let $\{\mathcal I_n\,\colon\,n\in \N\}$ be a collection of maximal ideals and let $\{A_n\,\colon\,n\in \N\}$ be a collection of pairwise disjoint subsets of $\mathbb N$ such that $A_n\in\mathcal I_n^*$ for each $n\in \N$. Let $\mathcal I=\bigcap_{n\in \N}\mathcal I_n$. Then 
    \begin{align*}
        P\colon \ell_\infty&\to c_{0,\mathcal I}\\
        {\bf x}&\mapsto{\bf x}-\sum_{m\in \N}(\mathcal I_m^*-\lim{\bf x})\chi_{A_m}
    \end{align*}
    is a continuous projection from $\ell_\infty$ onto $c_{0,\mathcal I}$.
\end{lemma}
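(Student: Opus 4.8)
The plan is to verify in turn that the operator $P$ is well defined and bounded, that its range lies inside $c_{0,\ideal}$, and finally that it restricts to the identity on $c_{0,\ideal}$; the last two facts together show that $P$ is a projection onto $c_{0,\ideal}$. Throughout, fix ${\bf x}=(x_n)\in\ell_\infty$ and write $\lambda_m=\mathcal I_m^*-\lim{\bf x}$. Since each $\mathcal I_m$ is maximal, $\mathcal I_m^*$ is an ultrafilter, so the limit $\lambda_m$ of the bounded scalar sequence ${\bf x}$ along $\mathcal I_m^*$ exists and is unique, with $|\lambda_m|\leq\|{\bf x}\|_\infty$. Because the sets $A_m$ are pairwise disjoint, the series $\sum_m\lambda_m\chi_{A_m}$ defines a single element of $\ell_\infty$ (at each coordinate $n$ at most one term is nonzero), of norm at most $\sup_m|\lambda_m|\leq\|{\bf x}\|_\infty$. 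Hence $P{\bf x}$ is well defined and $\|P{\bf x}\|_\infty\leq 2\|{\bf x}\|_\infty$, so $P$ is continuous; linearity of $P$ is immediate from the linearity of ultrafilter limits.

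For the key step, I fix $\varepsilon>0$ and show $A(\varepsilon,P{\bf x})\in\mathcal I=\bigcap_k\mathcal I_k$ by checking membership in each $\mathcal I_k$ separately. The main tool is the observation recorded just before Lemma \ref{suma-directa}: since $A_k\in\mathcal I_k^*$ we have $A_k\notin\mathcal I_k$, so for any set $B$ one has $B\in\mathcal I_k$ if and only if $B\cap A_k\in\mathcal I_k$. This localizes the whole test to indices in $A_k$, where $P{\bf x}$ takes the simple form $(P{\bf x})_n=x_n-\lambda_k$. Thus $A(\varepsilon,P{\bf x})\cap A_k\subseteq\{n\,\colon\,|x_n-\lambda_k|\geq\varepsilon\}$, and the latter set lies in $\mathcal I_k$ precisely because $\lambda_k=\mathcal I_k^*-\lim{\bf x}$. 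Since $\mathcal I_k$ is an ideal, $A(\varepsilon,P{\bf x})\cap A_k\in\mathcal I_k$, whence $A(\varepsilon,P{\bf x})\in\mathcal I_k$; as $k$ was arbitrary, $A(\varepsilon,P{\bf x})\in\mathcal I$, i.e. $P{\bf x}\in c_{0,\ideal}$.

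It remains to show that $P$ is the identity on $c_{0,\ideal}$, from which $P^2=P$ and surjectivity onto $c_{0,\ideal}$ follow formally (for ${\bf y}\in c_{0,\ideal}$ we get $P{\bf y}={\bf y}$, and $P^2{\bf x}=P(P{\bf x})=P{\bf x}$ because $P{\bf x}\in c_{0,\ideal}$). For ${\bf x}\in c_{0,\ideal}$ and any $m$, the set $A(\varepsilon,{\bf x})$ lies in $\mathcal I\subseteq\mathcal I_m$ for every $\varepsilon>0$, which is exactly the statement that $\mathcal I_m^*-\lim{\bf x}=0$; thus every $\lambda_m$ vanishes, the correction term $\sum_m\lambda_m\chi_{A_m}$ is zero, and $P{\bf x}={\bf x}$. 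I expect the only genuinely substantive point to be the localization step in the second paragraph: one must resist computing $P{\bf x}$ globally, where the various constants $\lambda_j$ subtracted on different blocks $A_j$ interact, and instead exploit the maximality of $\mathcal I_k$ to reduce the membership test to the single block $A_k$, on which only the constant $\lambda_k$ is subtracted and the ultrafilter-limit hypothesis applies directly.
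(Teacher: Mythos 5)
Your proof is correct and follows essentially the same route as the paper: compute $P{\bf x}$ blockwise on each $A_k$, use the ultrafilter-limit definition to get $A(\varepsilon,P{\bf x})\cap A_k\in\mathcal I_k$, pass to membership in $\mathcal I$ (you via the maximality observation $B\in\mathcal I_k\Leftrightarrow B\cap A_k\in\mathcal I_k$, the paper via Lemma \ref{suma-directa}, which is the same fact packaged as a lemma), and obtain idempotence from the vanishing of all $\mathcal I_m^*$-limits on $c_{0,\mathcal I}$. Your additional remarks on well-definedness of the disjointly supported sum and the reduction of surjectivity to $P$ being the identity on $c_{0,\mathcal I}$ are accurate and match the paper's argument.
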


\begin{proof}
Firstly, we prove that $P$ is well defined. Let ${\bf x}=(x_n)\in\ell_\infty$ and ${\bf y}=P({\bf x})=(y_n)$. Let  $\varepsilon>0$ and $m\in \N$, we have
\begin{gather*}
     A(\varepsilon,{\bf y})\cap A_m=\{n\in A_m\,\colon\,|y_n|\geq\varepsilon\}
    =\{n\in A_m\,\colon\,|x_n-(\mathcal I_m^*-\lim{\bf x})|\geq\varepsilon\}.
\end{gather*}
From the definition of $\mathcal I_m^*-\lim{\bf x}$ it follows that $\{n\in\mathbb N\,\colon\,|x_n-(\mathcal I_m^*-\lim{\bf x})|\geq\varepsilon\}\in\mathcal I_m$. 
Thus, $A(\varepsilon,{\bf y})\cap A_m\in\mathcal I_m$ for each $\varepsilon>0$ and $m\in \N$. By Lemma \ref{suma-directa}, $A(\varepsilon,{\bf y})\in\mathcal I$ for every $\varepsilon>0$. We conclude that ${\bf y}=P({\bf x})\in c_{0,\ideal}$. Therefore, $P$ is well defined. 

Clearly, $P$ is linear and $\|P\|\leq2$. To check that $P$ is a projection, let ${\bf x}\in\ell_\infty$ and ${\bf y}=P({\bf x})$. We have
\begin{gather*}
    P(P({\bf y}))=P({\bf y})-\sum_{m\in \N}(\mathcal I_m^*-\lim P({\bf x}))\chi_{A_m}=P({\bf y}),
\end{gather*}
because of $P({\bf x})\in c_{0,\mathcal I_m}$, that is, $\mathcal I_m^*-\lim P({\bf x})=0$ for each $m\in \N$. The previous argument also shows that $P(\ell_\infty)=c_{0,\mathcal I}$. Hence, $P$ is a projection from $\ell_\infty$ onto $c_{0,\mathcal I}$.
\end{proof}

Next result presents an explicit projection in the case of strongly $\omega$-maximal ideals.

\begin{theorem}
\label{proyecciones1}
Let $\ideal$ be a strongly $\omega$-maximal ideal. Then, there is a positive projection  $Q\colon\ell_\infty\to\ell_\infty$ such that $\ker Q=c_{0,\mathcal I}$. Moreover, if for each $B\subseteq\mathbb N$ we let $T(B)=\{n\in\mathbb N\,\colon\,Q(\chi_B)_n=1\}$, then the following properties hold:
\begin{enumerate}
    \item $Q(\chi_A)=\chi_{T(A)}$ for all $A\subseteq\mathbb N$ and $Q(\chi_{\mathbb N})=\chi_{\mathbb N}$;
    \item If $A\subseteq B$, then $T(A)\subseteq T(B)$;
    \item $T(T(A))=T(A)$ for all $A\subseteq\mathbb N$;
    \item $T(A\cap B)=T(A)\cap T(B)$ for all $A,B\subseteq\mathbb N$;
    \item The family $\mathcal B=\{B\subseteq\mathbb N\,\colon\,T(B)=B\}$ is closed under arbitrary intersections.
\end{enumerate}
\end{theorem}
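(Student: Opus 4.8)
The plan is to realize $Q$ as the complementary projection to the one built in Lemma \ref{construccion de la proyeccion}. Since $\mathcal{I}$ is strongly $\omega$-maximal, write $\mathcal{I}=\bigcap_n\mathcal{I}_n$ for a discrete family of maximal ideals, and apply Proposition \ref{particiones2} to fix a partition $\{A_n\,\colon\,n\in\mathbb N\}$ of $\mathbb N$ with $A_n\in\mathcal{I}_n^*\cap\bigcap_{j\neq n}\mathcal{I}_j$. Let $P$ be the projection of Lemma \ref{construccion de la proyeccion} and let $I$ be the identity on $\ell_\infty$; I would set $Q=I-P$, so that
\[
Q(\mathbf{x})=\sum_{m\in\mathbb N}(\mathcal{I}_m^*-\lim\mathbf{x})\,\chi_{A_m}.
\]
First I would record that $Q$ is a bounded projection ($Q^2=Q$ since $P^2=P$) whose kernel equals the range of $P$, which is $c_{0,\mathcal{I}}$; positivity is immediate because the ultrafilter limit $\mathcal{I}_m^*-\lim\mathbf{x}$ of a nonnegative bounded sequence is nonnegative, and the blocks $A_m$ are disjoint.

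The heart of the argument is to evaluate $Q$ on indicators. For $B\subseteq\mathbb N$ and maximal $\mathcal{I}_m$ one has $\mathcal{I}_m^*-\lim\chi_B=1$ when $B\notin\mathcal{I}_m$ (equivalently $B\in\mathcal{I}_m^*$) and $=0$ when $B\in\mathcal{I}_m$, whence
\[
Q(\chi_B)=\sum_{\{m\,:\,B\notin\mathcal{I}_m\}}\chi_{A_m}=\chi_{T(B)},\qquad T(B)=\bigcup\{A_m\,\colon\,B\notin\mathcal{I}_m\}.
\]
This is exactly (1); the second half of (1) follows since $\mathbb N\notin\mathcal{I}_m$ for all $m$, so $T(\mathbb N)=\bigcup_m A_m=\mathbb N$. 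Property (2) is then the monotonicity of the positive map $Q$ (or directly: $A\subseteq B$ and $A\notin\mathcal{I}_m$ force $B\notin\mathcal{I}_m$). Property (3) is $Q^2=Q$ read on indicators, namely $\chi_{T(T(A))}=Q(\chi_{T(A)})=Q^2(\chi_A)=Q(\chi_A)=\chi_{T(A)}$. For (4) I would use that, $\mathcal{I}_m$ being maximal, $C\notin\mathcal{I}_m\iff C\in\mathcal{I}_m^*$, so $A\cap B\notin\mathcal{I}_m\iff A\notin\mathcal{I}_m$ and $B\notin\mathcal{I}_m$; combined with the disjointness of the blocks this yields $T(A\cap B)=T(A)\cap T(B)$.

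For (5) the clean route is to identify $\mathcal B$ with the family of all unions of partition blocks. If $T(B)=B$, then $B=\bigcup\{A_m\,\colon\,B\notin\mathcal{I}_m\}$ is such a union; conversely, for $S\subseteq\mathbb N$ and $B=\bigcup_{m\in S}A_m$ the inclusion $A_m\subseteq B$ gives $B\in\mathcal{I}_m^*$ for $m\in S$, while for $j\notin S$ disjointness gives $B\subseteq A_j^c\in\mathcal{I}_j$ (as $A_j\in\mathcal{I}_j^*$), so $B\in\mathcal{I}_j$; hence $\{m\,\colon\,B\notin\mathcal{I}_m\}=S$ and $T(B)=B$. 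With this characterization, closure of $\mathcal B$ under arbitrary intersections is transparent: for $B_i=\bigcup_{m\in S_i}A_m$ the disjointness of the blocks gives $\bigcap_i B_i=\bigcup_{m\in\bigcap_i S_i}A_m\in\mathcal B$. I expect (5) to be the only genuinely delicate point: a direct membership argument would need to close some $\mathcal{I}_{m}$ under an infinite union, which fails, and recognizing $\mathcal B$ as precisely the unions of blocks $A_m$ is what circumvents this.
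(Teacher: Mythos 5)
Your proposal is correct and takes essentially the same route as the paper: the same $Q=Id-P$ built from Lemma \ref{construccion de la proyeccion} and the partition of Proposition \ref{particiones2}, with properties (1)--(4) verified by the same ultrafilter computations on indicators. The only (cosmetic) difference is in (5), where you characterize $\mathcal B$ as exactly the unions of partition blocks $A_m$; the paper proves the equivalent fact (its Claim states that any block meeting a member of $\mathcal B$ is contained in it) and then runs a pointwise argument, so the underlying idea is identical.
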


\begin{proof} Let $\{\mathcal I_j\,\colon\,j\in \N\}$ be a discrete collection of pairwise distinct maximal ideals on $\mathbb N$ such that  $\mathcal I=\bigcap_{j\in \N}\mathcal I_j$. By Proposition \ref{particiones2}, there is a partition  $\{A_m\,\colon\,m\in \N\}$ of $\mathbb N$ with $A_m\in\mathcal I_m^*$ for each $m\in \N$. Consider
the projection defined in Lemma \ref{construccion de la proyeccion}, that is, 
\begin{align*}
        P\colon \ell_\infty&\to c_{0,\mathcal I}\\
        {\bf x}&\mapsto{\bf x}-\sum_{m\in \N}(\mathcal I_m^*-\lim{\bf x})\chi_{A_m}.
    \end{align*}
 Now define $Q\colon\ell_\infty\to\ell_\infty$ by $Q=Id-P$, that is, 
\begin{equation}\label{form of the projection}
Q({\bf x})=\sum_{m\in \N}(\mathcal I_m^*-\lim{\bf x})\chi_{A_m},\quad {\bf x}\in\ell_\infty.
\end{equation}
 Notice that $Q$ is a positive projection and $\ker Q= c_{0,\mathcal I}$. Now we will check properties (1)-(5). 
 
 Observe that (1) follows from the definition of $Q$. Properties (2) and (3) follow from positiveness and idempotence of $Q$. For (4), notice that if $n\in\N$, then $Q(\chi_A)_n=1$ if and only if there is $m\in\mathbb N$ such that
 $n\in A_m$ and $A\in\mathcal I_m^*$. Let $A,B\subseteq\mathbb N$ be given. By (2) it suffices to show that $T(A)\cap T(B)\subseteq T(A\cap B)$. Let $n\in T(A)\cap T(B)$, that is, $Q(\chi_A)_n=1$ and $Q(\chi_B)_n=1$. Thus, there is $m\in\mathbb N$
 such that $n\in A_m$, $A\in\mathcal I_m^*$ and $B\in\mathcal I_m^*$. Therefore, $A\cap B\in\mathcal I_m^*$. So, $n\in T(A\cap B)$. 
 
 Now we will prove (5).
\begin{claim}
\label{A_m se mete en B si lo toca}
Let $B\subseteq\mathbb N$ be such that $T(B)=B$. If $A_m\cap B\neq\emptyset$, then $A_m\subseteq B$.
\end{claim}

Let $k\in A_m$. If $n\in A_m\cap B$, then $n\in T(B)$. Thus, there is $j\in\mathbb N$ such that $n\in A_j$ and $B\in\mathcal I_j^*$. As $n\in A_m$, $j=m$. 
Hence, $k\in A_m$ and $B\in\mathcal I_m^*$, that is, $k\in T(B)=B$.

Now let $\{C_i\}_{i\in I}$ be a collection in $\mathcal B$. By (2) we have $T(\bigcap_{i\in I}C_i)\subseteq\bigcap_{i\in I}C_i$. Let $n\in\bigcap_{i\in I}C_i$.
For each $i\in\mathbb N$, there is $m_i\in\mathbb N$ such that $n\in A_{m_i}$ and $C_i\in\mathcal I_{m_i}^*$. Observe that $(m_i)_{i\in I}$ is a constant family, say $m_i=m$ for each $i\in I$. Since $A_m\cap C_i\neq\emptyset$ and $T(C_i)=C_i$ for each $i\in I$, by Claim \ref{A_m se mete en B si lo toca} we obtain $A_m\subseteq C_i$ for all $i\in I$.
Consequently, $A_m\subseteq\bigcap_{i\in I}C_i$. It follows that $\bigcap_{i\in I}C_i\in\mathcal I_m^*$. So, $n\in T(\bigcap_{i\in I}C_i)$ and we are done. 
\end{proof}

Our next result is the  converse of  Theorem \ref{proyecciones1}. 

\begin{theorem}
\label{proyecciones1b}
Let $\mathcal I$ be a proper  ideal on $\mathbb N$. Assume that there is an operator $Q\colon\ell_\infty\to\ell_\infty$ such that 

\begin{itemize}
\item $Q$ is  a positive projection such that $\ker Q=c_{0,\mathcal I}$.
    
\item For each $A\subseteq\mathbb N$, there exists $T(A)\subseteq\mathbb N$ such that $Q(\chi_A)=\chi_{T(A)}$.

\item If $A,B\subseteq\mathbb N$, then $T(A\cap B)=T(A)\cap T(B)$.
    
\item The family $\mathcal B=\{B\subseteq\mathbb N\,\colon\,T(B)=B\}$ is closed under arbitrary intersections.
    
\end{itemize} 
Then $\mathcal I$ is strongly $\omega$-maximal. Moreover, there is a partition of $\mathbb N$ such that $Q$ has the same form as given in \eqref{form of the projection}.
\end{theorem}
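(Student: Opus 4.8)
\textbf{Proof strategy for Theorem \ref{proyecciones1b}.}

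The plan is to recover, from the abstract operator $Q$, a discrete family of maximal ideals whose intersection is $\mathcal I$, together with the partition that exhibits the projection in the form \eqref{form of the projection}. The natural candidates for the atoms of the partition are the minimal nonempty sets in the family $\mathcal B=\{B:T(B)=B\}$. First I would record the basic structural properties of $T$ that follow from the hypotheses: since $Q$ is a positive idempotent sending characteristic functions to characteristic functions, $T$ is monotone ($A\subseteq B\Rightarrow T(A)\subseteq T(B)$, using positivity), idempotent ($T(T(A))=T(A)$), meet-preserving ($T(A\cap B)=T(A)\cap T(B)$ by hypothesis), and satisfies $T(A)\in\mathcal B$ for every $A$ because $Q^2=Q$ forces $T(T(A))=T(A)$. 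Moreover $\ker Q=c_{0,\mathcal I}$ translates, applied to characteristic functions, into: $A\in\mathcal I\iff T(A)=\emptyset$, and more generally $T(A)=\emptyset\iff A\in\mathcal I$.

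The heart of the argument is to analyze the Boolean algebra $\mathcal B$, which is closed under arbitrary intersections by hypothesis and (via $T(A\cup B)=T(A)\cup T(B)$, which I would derive from positivity together with $Q(\chi_A+\chi_B-\chi_{A\cap B})=\chi_{T(A)}+\chi_{T(B)}-\chi_{T(A\cap B)}$ being a $0/1$ vector) closed under finite unions. Being closed under arbitrary intersections, for each $n\in T(\mathbb N)$ there is a smallest member $A_n\in\mathcal B$ containing $n$, namely $A_n=\bigcap\{B\in\mathcal B: n\in B\}$. I would show these minimal sets are either equal or disjoint, so that $\{A_m: m\}$ forms a partition of $T(\mathbb N)$ into the atoms of $\mathcal B$; the fact that $Q(\chi_{\mathbb N})=\chi_{\mathbb N}$ (which should follow from $\ker Q=c_{0,\mathcal I}$ and $\mathbb N\notin\mathcal I$, giving $T(\mathbb N)=\mathbb N$) then makes it a partition of $\mathbb N$ itself. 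For each atom $A_m$ I would define $\mathcal I_m=\{A\subseteq\mathbb N: A\cap A_m\in\mathcal I\restriction A_m,\ \text{i.e.}\ A_m\not\subseteq T(A)\}$ and check it is a maximal ideal: maximality reduces to showing $T(A)\supseteq A_m$ or $T(\mathbb N\setminus A)\supseteq A_m$ for every $A$, which is where the meet-preservation $T(A)\cap T(\mathbb N\setminus A)=T(\emptyset)=\emptyset$ and the minimality of $A_m$ combine, since $A_m$ is an atom it cannot be split.

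Finally I would verify $\mathcal I=\bigcap_m\mathcal I_m$ using $A\in\mathcal I\iff T(A)=\emptyset\iff A_m\not\subseteq T(A)$ for all $m$, that the collection $\{\mathcal I_m\}$ is discrete (each $A_m$ witnesses that $\mathcal I_m^*$ is isolated, via Proposition \ref{particiones2} applied to the partition $\{A_m\}$), and hence $\mathcal I$ is strongly $\omega$-maximal by definition. The representation of $Q$ in the form \eqref{form of the projection} then follows because on each atom $Q(\chi_A)$ restricted to $A_m$ equals either $\chi_{A_m}$ or $0$ according to whether $A\in\mathcal I_m^*$, which is exactly $(\mathcal I_m^*\text{-}\lim\chi_A)\chi_{A_m}$; extending by linearity and density of simple functions (with $\|Q\|<\infty$) gives the formula for all ${\bf x}\in\ell_\infty$. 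The main obstacle I anticipate is proving that the atoms $A_m$ genuinely partition $\mathbb N$ and that each induces a \emph{maximal} ideal — in particular ruling out that $\mathcal B$ has no atoms below some set (an atomlessness that would correspond precisely to the non-strongly-$\omega$-maximal situation); closure under arbitrary intersections is the crucial hypothesis that forces the existence of minimal sets and thus atoms, so I would make sure that step is airtight.
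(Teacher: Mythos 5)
Your plan reconstructs the paper's architecture almost exactly — your minimal sets $A_n=\bigcap\{B\in\mathcal B\,\colon\,n\in B\}$ are the paper's $V_n$, your $\mathcal I_m$ are the paper's $\mathcal I_n=2^{\mathbb N}\setminus\mathcal A_n$, and your closing density argument is the paper's — but one load-bearing step is justified incorrectly: the claim that $T(\mathbb N)=\mathbb N$ \emph{follows from $\ker Q=c_{0,\mathcal I}$ and $\mathbb N\notin\mathcal I$}. Those two facts yield only $T(\mathbb N)\neq\emptyset$ and, applying $Q$ to $\chi_{\mathbb N}-\chi_{T(\mathbb N)}\in\ker Q$ (idempotence), $\mathbb N\setminus T(\mathbb N)\in\mathcal I$; they do not yield $T(\mathbb N)=\mathbb N$. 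Concretely, take $\mathcal I$ maximal, $A\in\mathcal I^*$ with $A\neq\mathbb N$, and $Q({\bf x})=(\mathcal I^*-\lim{\bf x})\chi_A$: this is a positive projection with kernel $c_{0,\mathcal I}$, it sends characteristic functions to characteristic functions, $T(A'\cap B')=T(A')\cap T(B')$ holds, and $\mathcal B=\{\emptyset,A\}$ is closed under intersections of nonempty subfamilies — yet $T(\mathbb N)=A\neq\mathbb N$, and $Q$ does not have the form \eqref{form of the projection} for any partition of $\mathbb N$ (that form forces $Q(\chi_{\mathbb N})=\chi_{\mathbb N}$). So $T(\mathbb N)=\mathbb N$ cannot be extracted from the kernel condition; it must come from the closure hypothesis on $\mathcal B$, read so that the empty intersection (namely $\mathbb N$) belongs to $\mathcal B$, equivalently so that every $n$ lies in some member of $\mathcal B$. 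That is exactly how the paper proves it: $n\in V_n\in\mathcal B$ and $V_n=T(V_n)\subseteq T(\mathbb N)$ by monotonicity. The same repair works in your setup, but you must then define the atoms $A_n$ for every $n\in\mathbb N$, not only for $n\in T(\mathbb N)$; as written, your atoms partition only $T(\mathbb N)$ and the ``Moreover'' clause stays out of reach.

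With that step repaired, the rest of your plan does go through, and it is a genuine (if mild) reorganization of the paper's proof. The paper first establishes maximality of each $\mathcal I_n$ by observing that meet-preservation yields property \eqref{Q-mult} and re-running the proof of Theorem \ref{implicaciones}, and only afterwards proves the sets $V_n$ are pairwise equal or disjoint, using that an inclusion between the resulting ultrafilters is an equality. You invert the order and stay self-contained: since $T(A)\cup T(\mathbb N\setminus A)=T(\mathbb N)=\mathbb N$ (join-preservation, which you correctly derive from linearity) and $T(A)\cap T(\mathbb N\setminus A)=\emptyset$, the sets $A_m\cap T(A)$ and $A_m\cap T(\mathbb N\setminus A)$ are members of $\mathcal B$ partitioning $A_m$, and minimality of $A_m$ at its defining point forces $A_m$ into exactly one of them. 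This dichotomy gives maximality of $\mathcal I_m$ directly, and, applied with $A=A_k$, it also proves the equal-or-disjoint property you left unproved (if $A_m\cap A_k\neq\emptyset$, the option $A_m\subseteq T(\mathbb N\setminus A_k)$ is impossible because $T(\mathbb N\setminus A_k)\cap A_k=\emptyset$, so $A_m\subseteq A_k$, and symmetrically). So your route avoids any appeal to Theorem \ref{implicaciones} at the cost of checking by hand that each $\mathcal I_m$ is a proper ideal; both routes are of comparable length, and they meet again in the final representation of $Q$ on simple functions plus density.
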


\begin{proof} For each $n\in \mathbb N$, let $V_n=\bigcap\{B\in\mathcal B\,\colon\,n\in B\}$. Since $\mathcal B$ is closed under arbitrary intersections, it follows that $V_n\in \mathcal{B}$, i.e., $T(V_n)=V_n$ for every $n\in\mathbb N$.

\begin{claim}\label{properties of T}
The map $T\colon A\in \mathcal P(\mathbb N)\mapsto T(A)\in\mathcal P(\mathbb N)$ has the following properties:

\begin{enumerate}
 \item\label{idempotencia of T} For each $A\subseteq\mathbb N$, $T(T(A))=T(A)$;
 \item\label{monotonia de T} If $A\subseteq B$, then $T(A)\subseteq T(B)$.
 \item $Q(\chi_{\mathbb N})=\chi_{\mathbb N}$. 
\end{enumerate}
\end{claim} 

\proof[Proof of the Claim \ref{properties of T}] Properties \eqref{idempotencia of T} and \eqref{monotonia de T} follow from idempotence and positiveness of $Q$.  For (3), it suffices to show that $\mathbb N
\subseteq T(\mathbb N)$. Let $n\in\mathbb N$ be given. As $n\in V_n$ and $ V_n=T(V_n)\subseteq T(\mathbb N)$, we have $n\in T(\mathbb N)$.
\endproof

We will follow the notation used in the proof of Theorem \ref{implicaciones}. Recall that for each $n\in\mathbb N$, we set $\mathcal A_n=\{A\subseteq\mathbb N\,\colon Q(\chi_A)_n\neq0\}$. As $Q$ is a positive operator, $\mathcal H_n=\mathcal A_n$ for all $n\in\mathbb N$.
Also notice that if $A,B\subseteq\mathbb N$ satisfy that $Q(\chi_A)_n=\chi_{T(A)}(n)\neq0$ and $Q(\chi_B)_n=\chi_{T(B)}(n)\neq0$, then
$n\in T(A)\cap T(B)=T(A\cap B)$, that is, $Q(\chi_{A\cap B})_n=1$. Thus, $Q$ verifies the property \eqref{Q-mult}. Hence, by the proof of Theorem \ref{implicaciones}, we have  $\mathcal I_n=2^\mathbb N\setminus\mathcal A_n$ is a maximal ideal for each $n\in\{m\in\mathbb N\,\colon\,Q(\chi_{\mathbb N})_m\neq0\}=\N$ (using  part (3) of Claim \ref{properties of T}). Also, we have  that  $\mathcal I=\bigcap_{n\in \N}\mathcal I_n$. 

\begin{claim}
\label{V_n=V_m si y solo si A_m=A_n}
For all $n,m\in \N$, $\mathcal{A}_m\subseteq\mathcal{A}_{n}$ if and only if $V_n\subseteq V_m$. In particular, for each $m,n\in\mathbb N$ it holds $\mathcal{A}_m=\mathcal{A}_{n}$ if and only if $V_n=V_m$.
\end{claim}  

\proof[Proof of the Claim \ref{V_n=V_m si y solo si A_m=A_n}] Suppose $\mathcal{A}_m\subseteq \mathcal{A}_{n}$. Since $V_m\in\mathcal A_m$, we have $V_m\in\mathcal A_n$, that is, $n\in T(V_m)=V_m$. Thus, $V_n\subseteq V_m$.

Conversely, suppose $V_n \subseteq V_m$. Let $B\in \mathcal{A}_m$, that is, $m\in T(B)$. Since $T(T(B))=T(B)$, $T(B)\in \mathcal{B}$. Thus, $V_m\subseteq T(B)$. Hence, $V_n\subseteq T(B)$.
In particular, $n\in T(B)$, that is, $B\in \mathcal{A}_n$. Therefore, $\mathcal{A}_n\subseteq \mathcal{A}_m$. 
\endproof

We will show  that if  $V_n\cap V_m\neq \emptyset$, then $V_n=V_m$, for all $m,n\in\mathbb N$. Indeed, let $k\in V_n\cap V_m$. As $V_n\cap V_m\in \mathcal{B}$,   we have that $V_k\subseteq V_m\cap V_n$. By Claim \ref{V_n=V_m si y solo si A_m=A_n}, $\mathcal{A}_n\subseteq \mathcal{A}_k$. Thus by the maximality of $\mathcal A_n$, $\mathcal{A}_n=\mathcal{A}_k$. Analogously, $\mathcal{A}_m=\mathcal{A}_k$. Hence $V_n=V_m$. 

Now consider the following equivalence relation on $\mathbb N$:  $m\sim n$ if and only if $\mathcal A_n=\mathcal A_m$. By Claim \ref{V_n=V_m si y solo si A_m=A_n} we have $n\sim m$ if and only if $V_n=V_m$. Moreover, we have shown that $n\not\sim m$ if and only if $V_n\cap V_m=\emptyset$. Let $F$ be a complete set of representatives and consider the collection $\{V_n\,\colon\,n\in F\}$. Notice that if $n,m\in F$ and $n\neq m$, then $V_n\cap V_m=\emptyset$. Also, $V_n\in\mathcal I_n^*=\mathcal A_n$ for each $n\in F$. Thus $\{\mathcal I_n\,\colon\,n\in F\}$ is discrete. Moreover, since $\mathcal I=\bigcap_{n\in\mathbb N}\mathcal I_n$  and $F$ is a complete set of representatives, we have
$\mathcal I=\bigcap_{n\in F}\mathcal I_n$. Consequently, $\mathcal I$ is strongly $\omega$-maximal.

Finally,  we will prove that
    \begin{gather*}
        Q({\bf x})=\sum_{m\in F}(\mathcal I_m^*-\lim x)\chi_{V_m},\quad\text{for all ${\bf x}\in\ell_\infty$.}
    \end{gather*}
    For all $A\subseteq\mathbb N$ and $n\in\mathbb N$ it holds that $\mathcal I_n^*-\lim\chi_A=\chi_{T(A)}(n)$. If ${\bf x}=\sum_{i=1}^kc_i\chi_{A_i}$, then $ Q({\bf x})_n=\sum_{i=1}^kc_i\chi_{T(A_i)}(n).$ On the other hand, 
   \begin{align*}
       \sum_{m\in F}(\mathcal I_m^*-\lim{\bf x})\chi_{V_{m}}(n)&=(\mathcal I_k^*-\lim{\bf x})\chi_{V_k}(n)\\
       &=(\mathcal I_n^*-\lim{\bf x})\chi_{V_n}(n)\\
       &=\sum_{i=1}^kc_i\chi_{T(A_i)}(n).
   \end{align*}
   Thus, $ Q({\bf x})=\sum_{m\in F}(\mathcal I_m^*-\lim{\bf x})\chi_{V_m}$ when ${\bf x}=\sum_{i=1}^kc_i\chi_{A_i}$. The general case follows since the linear span of the set
   $\{\chi_A\,\colon\,A\subseteq\mathbb N\}$ is dense in $\ell_\infty$ (see \cite[Proposition 2.5]{rincon-uzcategui}). Therefore, $Q$ has the same form as given in \eqref{form of the projection}.
\end{proof}

Now, in the same spirit of Theorems \ref{teorema michael-saens} and \ref{implicaciones}, we give a characterization of strongly $\omega$-maximal ideals
in terms of the quotient $\ell_\infty/c_{0,\mathcal I}$. 

\begin{theorem}\label{s-omega-max caracterizacion in terms of the cociente}
    Let $\mathcal I$ be an ideal on $\mathbb N$. The following statements are equivalent:
    \begin{enumerate}
        \item $K_\mathcal I$ is homeomorphic to $\beta\mathbb N$;
        \item $\ell_\infty/c_{0,\mathcal I}$ is isometric to $\ell_\infty$;
        \item $\ell_\infty/c_{0,\mathcal I}$ is Banach lattice isomorphic to $\ell_\infty$;
        \item $\mathcal I$ is strongly $\omega$-maximal.
        \item $c_{0,\mathcal I}$ is strongly complemented.
    \end{enumerate}
\end{theorem}

\begin{proof}
    $(1)\Longleftrightarrow(2)$ It is classical result that $C(\beta\mathbb N)$ is isometric to $\ell_\infty$ and $C(K_\ideal)$ is isometric to $\ell_\infty/c_{0,\mathcal I}$ by Theorem \ref{kania}. Thus,  the claim follows from the classical Banach-Stone theorem \cite[Theorem 4.1.5]{AK}.  
    
    $(1)\Longleftrightarrow(3)$ is shown analogously using  a well known result of Kaplansky \cite{kaplansky}.
    
    $(4)\Longleftrightarrow(5)$ is proved in Theorems \ref{proyecciones1} and \ref{proyecciones1b}.

\medskip
    
    $(4)\Rightarrow(3)$. Suppose that $\mathcal I$ is strongly $\omega$-maximal. By Proposition \ref{particiones2} and Lemma \ref{suma-directa}, there is a partition
    $\{A_n\,\colon\,n\in\mathbb N\}$ such that $A_n\in\mathcal I_n^*$ for every $n\in\mathbb  N$ and $A\in\mathcal I$ if and only if $A\cap A_n\in\mathcal I_n$ for each $n\in\mathbb N$.
    \begin{claim}
        The map 
        \begin{align*}
            \Psi\colon\ell_\infty&\to\ell_\infty((\ell_\infty/c_{0,\mathcal I_m})_{m\in\mathbb N})\\
            {\bf x}=(x_n)&\mapsto((x_n\chi_{A_m}(n))+c_{0,\mathcal I_m})
        \end{align*}
        is an onto Banach lattice homomorphism and $\ker\Psi=c_{0,\mathcal I}$.
    \end{claim}
In fact, from the definition, it is clear that $\Psi$ is linear and preserves the lattice operations. To check continuity, we will use  the following formula proved in \cite[Lemma 5.7]{rincon-uzcategui}. For ${\bf x}=(x_n)\in\ell_\infty$ we have
\begin{equation*}
    \|\Psi({\bf x})\|=\sup_{m\in\mathbb N}\|(x_n\chi_{A_m}(n))+c_{0,\mathcal I_m}\|=
    \sup_{m\in\mathbb N}\bigg(\mathcal I_m-\lim|x_n\chi_{A_m}(n)|\bigg)\leq\|{\bf x}\|.
\end{equation*}
    
    We will verify that $\Psi$ is onto. Let $({\bf y}^m+c_{0,\mathcal I_m})\in\ell_\infty((\ell_\infty/c_{0,\mathcal I_m})_{m\in\mathbb N})$ be given. For each $m\in\mathbb N$, let $a_m=\mathcal I_m-\lim{\bf y}^m$, which exists because $\mathcal I_m$ is maximal. Then, for every $m\in\mathbb N$, 
    \begin{equation*}
        {\bf y}^m+c_{0,\mathcal I_m}=a_m{\bf 1}+c_{0,\mathcal I_m}=a_m\chi_{A_m}+c_{0,\mathcal I_m}.
    \end{equation*}
    Define ${\bf x}=(x_n)\in\ell_\infty$ by $x_n=a_m$ whenever $n\in A_m$. Hence, 
    \begin{equation*}
        \Psi({\bf x})=((x_n\chi_{A_m}(n))+c_{0,\mathcal I_m})=((a_m\chi_{A_m}(n))+c_{0,\mathcal I_m})=({\bf y}^m+c_{0,\mathcal I_m}).
    \end{equation*}
    and therefore $\Psi$ is surjective. 

   Next, we prove that $\ker\Psi=c_{0,\mathcal I}$. Suppose that ${\bf x}=(x_n)\in\ell_\infty$ satisfies that $\Psi({\bf x})={\bf 0}$. Then $((x_n\chi_{A_m}(n))\in c_{0,\mathcal I_m})$ for each $m\in\mathbb N$. Let $\varepsilon>0$ be given. By definition, $ A(((x_n\chi_{A_m}(n)),\varepsilon)=A({\bf x},\varepsilon)\cap A_m\in\mathcal I_m$ for every $m\in\mathbb N$. By Lemma \ref{suma-directa}, it follows that $A({\bf x},\varepsilon)\in\mathcal I$. Since $\varepsilon>0$ was arbitrary, we conclude that ${\bf x}\in c_{0,\mathcal I}$.

   Finally, the map $\Psi$ induces a Banach lattice isomorphism from $\ell_\infty/c_{0,\mathcal I}$ onto $\ell_\infty((\ell_\infty/c_{0,\mathcal I_m})_{m\in\mathbb N})$. Since $\dim\ell_\infty/c_{0,\mathcal I_m}=1$ for each $m\in\mathbb N$, we have that $\ell_\infty/c_{0,\mathcal I_m}$ is Banach lattice isomorphic to $\mathbb R$. By Remark \ref{l_infty-sums}, $\ell_\infty((\ell_\infty/c_{0,\mathcal I_m})_{m\in\mathbb N})$ is Banach lattice isomorphic to $\ell_\infty$. Consequently, $\ell_\infty/c_{0,\mathcal I}$ is Banach lattice isomorphic to $\ell_\infty$.
 
   \medskip 
   
    $(3)\Longrightarrow(4)$ Suppose that  $\ell_\infty/c_{0,\mathcal I}$ is Banach lattice isomorphic to $\ell_\infty$ and let
    $T\colon\ell_\infty/c_{0,\mathcal I}\to\ell_\infty$ be a Banach lattice isomorphism. For each $n\in\mathbb N$, let ${\bf x}_n\in\ell_\infty$ be such that
    $T({\bf x}_n+c_{0,\mathcal I})=e_n$. Since ${\bf x}_n\not\in c_{0,\mathcal I}$, there are $\varepsilon_n>0$ and $A_n\not\in\mathcal I$ satisfying $\varepsilon_n\chi_{A_n}\leq|{\bf x}_n|$.
    Thus, $T(\varepsilon_n\chi_{A_n}+c_{0,\mathcal I})\leq T({\bf x}_n+c_{0,\mathcal I})=e_n$. Hence, $T(\chi_{A_n}+c_{0,\mathcal I})=a_ne_n$ for some $a_n>0$.

\begin{claim}
    If $m,n\in\mathbb N$ and $m\neq n$, then $A_n\cap A_m\in\mathcal I$.
\end{claim}
Indeed, we have $\chi_{A_m\cap A_n}+c_{0,\mathcal I}=(\chi_{A_m}+c_{0,\mathcal I})\wedge(\chi_{A_n}+c_{0,\mathcal I})$. Hence,
\begin{equation*}
    T(\chi_{A_m\cap A_n}+c_{0,\mathcal I})=T(\chi_{A_m}+c_{0,\mathcal I})\wedge T(\chi_{A_n}+c_{0,\mathcal I})=a_me_m\wedge a_ne_n={\bf0}.
\end{equation*}
    
   \begin{claim}\label{for maximality I}
       For each $n\in\mathbb N$, $\mathcal I\restriction A_n$ is maximal.
   \end{claim}
   Fix $n\in\mathbb N$. Let $B\subseteq A_n$ be such that $B\not\in\mathcal I$. Then, $T(\chi_B+c_{0,\mathcal I})\leq T(\chi_{A_n}+c_{0,\mathcal I})=a_ne_n$.
   Thus, $T(\chi_B+c_{0,\mathcal I})=b_ne_n$ for some $0<b_n\leq a_n$. Consequently, $T((\chi_{A_n}-\frac{a_n}{b_n}\chi_B)+c_{0,\mathcal I})={\bf0}$.
   It follows that $A_n\setminus B\in\mathcal I$ because of $A_n\setminus B\subset A({\bf z},1/2)$ where ${\bf z}=\chi_{A_n}-\frac{a_n}{b_n}\chi_B\in c_{0,\mathcal I}$.

   \begin{claim}\label{for maximality II}
       $B\in\mathcal I$ if, and only if, $B\cap A_n\in\mathcal I$ for each $n\in\mathbb N$.
   \end{claim}
   Suppose that $B\cap A_n\in\mathcal I$ for each $n\in\mathbb N$ and let $T(\chi_B+c_{0,\mathcal I})={\bf y}=(y_n)$. Fix $m\in\mathbb N$. Notice that ${\bf y}\geq0$, and 
   \begin{equation*}
     {\bf0}= T(\chi_{B\cap A_m}+c_{0,\mathcal I})=T(\chi_B+c_{0,\mathcal I})\wedge T(\chi_{A_m}+c_{0,\mathcal I})={\bf y}\wedge(a_me_m)=\min\{y_m,a_m\}e_m.
   \end{equation*}
   Thus, $y_m=0$. Since $m\in\mathbb N$ was arbitrary, we conclude that ${\bf y}={\bf0}$. So, $B\in\mathcal I$.
   \medskip

   Now, following the argument used in the proof of  Lemma \ref{linear independence}, we may assume that $A_m\cap A_n=\emptyset$ for every $m,n\in\mathbb N$ with $m\neq n$. 
   Define $\mathcal I_n=\mathcal I\restriction A_n\sqcup\mathcal P(A_n^c)$ for each $n\in\mathbb N$. By (1) of Lemma \ref{condicion equivalente a maximalidad} and Claim \ref{for maximality I}, $\mathcal I_n$ is maximal for all $n\in\mathbb N$. It follows from Claim \ref{for maximality II} that $\mathcal I=\bigcap_{n\in\mathbb N}\mathcal I_n$. Therefore, by Proposition \ref{particiones2}, $\mathcal I$ is strongly $\omega$-maximal. 
\end{proof}

\section{Relative complementation and  AD-families}

In this section,  we study when  $c_{0,\ideal}$ is complemented in $c_{0,\idealj}$ when $\ideal\subseteq \mathcal{J}$.
Also, we discuss the existence of $\mathcal I$-AD families and their connection with the complementation of $\mathcal I$. 

Recall that a $\mathcal{A}\subseteq \ideal^+$ is  $\ideal$-AD if $X\cap Y\in \ideal$ for all $X,Y\in \mathcal A$.  We let 
\begin{gather*}
\mathfrak{ad}_\ideal(\mathcal A)=\max\{|\mathcal B|\,\colon\,\mbox{$\mathcal B\subseteq \mathcal A$ is an $\ideal$-AD family}\}.
\end{gather*}
We will write just $\mathfrak{ad}(\mathcal A)$ when it is clear from the context which ideal $\ideal$ is used. 

We begin by stating a lemma, which is a well-known consequence of Theorem \ref{talagrand theorem} (for a proof, see, for instance, \cite{Leonetti2018}).
\begin{lemma}
\label{lemma talagrand}
Let $\mathcal I$ be a meager ideal on $\mathbb N$. Then $\mathfrak{ad}(\mathcal \ideal^+)=2^{\aleph_0}$.
\end{lemma}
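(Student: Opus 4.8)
The plan is to extract from the meagerness of $\mathcal{I}$ the combinatorial partition furnished by Theorem \ref{talagrand theorem}, and then transport a classical almost disjoint family living on the index set $\mathbb{N}$ through that partition so as to obtain an $\mathcal{I}$-AD family of full cardinality.

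First I would invoke condition (3) of Theorem \ref{talagrand theorem}: since $\mathcal{I}$ is meager, there is a partition $\{F_k \colon k \in \mathbb{N}\}$ of $\mathbb{N}$ into finite sets such that $\bigcup_{k \in M} F_k \notin \mathcal{I}$ whenever $M \subseteq \mathbb{N}$ is infinite. Next I would recall the standard fact that there exists an almost disjoint family $\{M_\alpha \colon \alpha < 2^{\aleph_0}\}$ of infinite subsets of $\mathbb{N}$, that is, each $M_\alpha$ is infinite while $M_\alpha \cap M_\beta$ is finite for $\alpha \neq \beta$; one may, for instance, take the branches $b_x = \{x \restriction n \colon n \in \mathbb{N}\}$ of the binary tree $2^{<\omega}$ after identifying $2^{<\omega}$ with $\mathbb{N}$ through a bijection.

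For each $\alpha < 2^{\aleph_0}$ I would set $A_\alpha = \bigcup_{k \in M_\alpha} F_k$, and only two verifications would remain. Since $M_\alpha$ is infinite, the defining property of the partition gives $A_\alpha \notin \mathcal{I}$, so $A_\alpha \in \mathcal{I}^+$. And since the blocks $F_k$ are pairwise disjoint, for $\alpha \neq \beta$ one has $A_\alpha \cap A_\beta = \bigcup_{k \in M_\alpha \cap M_\beta} F_k$, a finite union of finite sets because $M_\alpha \cap M_\beta$ is finite; as $\fin \subseteq \mathcal{I}$, this intersection lies in $\mathcal{I}$. Hence $\{A_\alpha \colon \alpha < 2^{\aleph_0}\}$ is an $\mathcal{I}$-AD family of size $2^{\aleph_0}$. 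Finally, because any family of subsets of $\mathbb{N}$ has cardinality at most $2^{\aleph_0}$, the supremum defining $\mathfrak{ad}(\ideal^+)$ is attained and equals $2^{\aleph_0}$.

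I do not expect any serious obstacle here, since the entire content is the choice of partition, which is exactly what Theorem \ref{talagrand theorem} delivers. The single point deserving care is the conceptual link between the two notions of almost disjointness: genuine almost disjointness of the index sets $M_\alpha$ (finite intersections) becomes $\mathcal{I}$-almost disjointness of the sets $A_\alpha$ (intersections in $\mathcal{I}$) precisely because the partition blocks are disjoint and finite, so that finite overlaps of indices produce only finite, hence $\mathcal{I}$-small, overlaps of the $A_\alpha$, while infinite index sets are guaranteed to be $\mathcal{I}$-positive by the Talagrand condition.
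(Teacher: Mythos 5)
Your proof is correct and follows exactly the route the paper intends: the paper states this lemma without proof as ``a well-known consequence of Theorem \ref{talagrand theorem}'' (citing \cite{Leonetti2018}), and the standard argument is precisely yours---transfer a classical almost disjoint family of size $2^{\aleph_0}$ through the Jalali--Naini/Talagrand partition into finite blocks, so that finite index overlaps yield finite (hence $\mathcal{I}$-small) intersections while infinite index sets yield $\mathcal{I}$-positive sets.
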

On the other hand,   $\mathfrak{ad}(\mathcal \ideal^+)\leq \aleph_0$ for every  $\omega$-maximal ideal $\ideal$, this was implicitly shown by  Plewik \cite{plewik} and, for the sake of completion, we include a direct proof. Let  $\ideal=\bigcap_n \ideal_n$ be an $\omega$-maximal ideal and  suppose $\mathcal{B}\subseteq \ideal^+$ is an uncountable  $\ideal$-AD family. Then,   there is $n$ such that  $|\mathcal{B}\cap \ideal_n^+|\geq 2$. If $A,B\in \mathcal{B}\cap \ideal_n^+$ are two different sets, then $A\cap B\in \ideal\subseteq \ideal_n$, which contradicts that  $\ideal_n$ is  maximal.    Additionally, any $\omega$-maximal ideal is not measurable with respect to the usual product measure on $2^\N$ (see \cite[section 4.1]{kadetsetal2022}).

The proof presented above shows that:

 \begin{corollary}(Kania \cite[Theorem A]{kania})\label{Kania result}
Let $\ideal$ be an ideal on $\N$. If $\ideal$ is complemented, then $\mathfrak{ad}(\ideal^+)\leq\aleph_0$.
\end{corollary}

The following theorem extends Kania's result to ideals $\mathcal I$ and $\mathcal J$ on $\mathbb N$ such that $\mathcal I\subsetneq \idealj$ and $c_{0,\mathcal I}$ complemented in $c_{0,\mathcal J}$. 

\begin{theorem}
\label{complemented-ad-numerable}
Let $\mathcal I\subsetneq \idealj$ be proper ideals on $\mathbb N$. Suppose that $c_{0,\mathcal I}$ is kernel of an operator from $c_{0,\mathcal J}$ to $c_{0,\mathcal J}$. Then, $\mathfrak{ad}_\ideal(\idealj\cap \ideal^+)\leq\aleph_0$. In particular, if $c_{0,\mathcal I}$ is complemented in $c_{0,\mathcal J}$, then $\mathfrak{ad}_\ideal(\idealj\cap \ideal^+)\leq\aleph_0$.
\end{theorem}

\proof
Let $Q\colon c_{0,\mathcal J}\to c_{0,\mathcal J}$ be a bounded linear operator such that $\ker Q=c_{0,\mathcal I}$. For $k,n\in \N$, let $\mathcal{A}_{n,k}$ be as in Lemma \ref{esquema2}.  Then, we have $\idealj\cap \ideal^+=\bigcup_{n,k}\mathcal{A}_{n,k}$ and $\mathfrak{ad}_\ideal(\mathcal{A}_{n,k})<\aleph_0$ for each $n,k$. Suppose $\mathcal B \subseteq \idealj\cap \ideal^+$ is an $\ideal$-AD family. Then, for every
$n,k\in\mathbb N$, $\mathcal B \cap \mathcal{A}_{n,k}$ is finite. Thus, $\mathcal B$ is countable, and consequently $\mathfrak{ad}_\ideal(\idealj\cap \ideal^+)\leq\aleph_0$. 
\endproof

The next observation follows immediately from Lemma \ref{lemma talagrand}.

\begin{corollary}
    Let $\mathcal I\subsetneq \idealj$ be proper ideals on $\mathbb N$ such that $\ideal\restriction A$ is meager  as a subset of $2^A$ for some infinite set $A\in \idealj$.  Then, $\mathfrak{ad}_\ideal(\idealj\cap \ideal^+)>\aleph_0$. 
\end{corollary}

An ideal $\ideal$ is {\em everywhere meager} \cite{farkas-khomskii-vidnyanszky},  if $\ideal\restriction A$ is meager  in $2^A$ for all $A\in\ideal ^+$, which in turns is equivalent to requiring that $\ideal\restriction A$ is Baire measurable   in $2^A$ for all $A\in\ideal ^+$. Every analytic ideal is everywhere meager. On the opposite side, a complemented ideal $\ideal$ is  nowhere meager, since $\mathcal I\restriction A$ is not meager in $2^A$ for all  $A\in\mathcal I^+$ by Proposition \ref{restricciones complementados}.

From Theorem \ref{complemented-ad-numerable} we get the following corollary. A similar result appears in  \cite[Corollary 1.5]{Leonetti2018} about the space $c$ of convergent sequences. 

\begin{corollary}
Let $\mathcal I\subsetneq \idealj$ be proper ideals on $\mathbb N$ such that $\ideal$ is everywhere meager. Then  $c_{0,\ideal}$ is not complemented in $c_{0,\idealj}$. In particular, $c_0$ is not complemented in $c_{0,\mathcal J}$ for any $\idealj\supsetneq\fin$.
\end{corollary}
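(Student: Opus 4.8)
The plan is to prove the contrapositive-style statement by contradiction, leaning entirely on the two results that immediately precede it, so that no new combinatorial work is needed. Suppose, towards a contradiction, that $c_{0,\ideal}$ is complemented in $c_{0,\idealj}$. Since $\ideal\subsetneq\idealj$ are proper ideals, Theorem \ref{complemented-ad-numerable} applies verbatim and gives $\mathfrak{ad}_\ideal(\idealj\cap\ideal^+)\leq\aleph_0$. This is the one nontrivial input, but it is already established, so I would simply quote it.

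Next I would manufacture a witness set to feed into the Corollary just above. Because the containment $\ideal\subsetneq\idealj$ is proper, there is some $A\in\idealj\setminus\ideal$, i.e.\ $A\in\idealj\cap\ideal^+$; moreover $A$ is infinite, since $\fin\subseteq\ideal$ forces every finite set into $\ideal$. The everywhere-meagerness of $\ideal$ then says precisely that $\ideal\restriction A$ is meager in $2^A$. Applying the preceding Corollary to this infinite $A\in\idealj$ yields $\mathfrak{ad}_\ideal(\idealj\cap\ideal^+)>\aleph_0$, which directly contradicts the bound obtained from complementation. Hence $c_{0,\ideal}$ cannot be complemented in $c_{0,\idealj}$.

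For the ``in particular'' clause I would specialize to $\ideal=\fin$, noting that $c_{0,\fin}=c_0$. The ideal $\fin$ is a countable (hence analytic) subset of $2^\N$, so it is everywhere meager, and the general statement applies to any $\idealj\supsetneq\fin$ to conclude that $c_0$ is not complemented in $c_{0,\idealj}$. There is essentially no analytic obstacle in this argument: the entire substance is delegated to Theorem \ref{complemented-ad-numerable} (which packages the combinatorics of the projection from Lemma \ref{esquema2}) and to the Jalali-Naini--Talagrand machinery underlying the preceding Corollary via Lemma \ref{lemma talagrand}. The only points demanding genuine care are bookkeeping ones, namely verifying that the witness $A$ is \emph{infinite} (so that proper containment beyond $\fin$ yields an infinite set outside $\ideal$, not merely a nonempty one) and that $A\in\idealj$, both of which hold by construction.
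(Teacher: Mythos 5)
Your proposal is correct and follows exactly the route the paper intends: the corollary is stated as an immediate consequence of Theorem \ref{complemented-ad-numerable} combined with the preceding corollary (which rests on Lemma \ref{lemma talagrand}), and your argument—producing an infinite $A\in\idealj\cap\ideal^+$ from the proper inclusion, invoking everywhere meagerness of $\ideal\restriction A$, and deriving the contradiction $\aleph_0<\mathfrak{ad}_\ideal(\idealj\cap\ideal^+)\leq\aleph_0$—is precisely this, with the specialization $\ideal=\fin$ for the final clause also matching the paper's intent. No gaps.
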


\begin{proposition}
 Let $\mathcal I$ and $\mathcal J$ be ideals on $\N$ with $\mathcal I\subsetneq\mathcal J$ and $k$ be a positive integer. If $\mathcal J$ is complemented and $\mathcal I$ is $k$-maximal in $\mathcal J$, then $\mathcal I$ is complemented.  
\end{proposition}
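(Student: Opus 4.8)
The plan is to reduce the statement to two standard facts about Banach spaces: that a closed subspace of finite codimension is always complemented, and that complementation is transitive. First I would invoke Theorem \ref{I+particion}: since $\mathcal I$ is $k$-maximal in $\mathcal J$, the quotient $c_{0,\mathcal J}/c_{0,\mathcal I}$ has finite dimension $k$. As $c_{0,\mathcal I}$ is closed in $\ell_\infty$ and sits between $c_{0,\mathcal I}\subseteq c_{0,\mathcal J}\subseteq\ell_\infty$, it is in particular closed in $c_{0,\mathcal J}$. Thus $c_{0,\mathcal I}$ is a closed subspace of finite codimension $k$ in $c_{0,\mathcal J}$.

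Next I would use that any closed subspace $E$ of finite codimension in a Banach space $X$ is complemented: choosing $y_1,\ldots,y_k\in X$ whose cosets form a basis of $X/E$, the finite-dimensional space $F=\mathrm{span}\{y_1,\ldots,y_k\}$ satisfies $X=E\oplus F$, and the projection onto $E$ along $F$ is bounded because its coordinate functionals factor through the finite-dimensional (hence bounded) quotient map. In the present setting one can even write the projection explicitly via the map $\Phi$ from the proof of Theorem \ref{I+particion}: if $\mathcal I=(\bigcap_{i=1}^k\mathcal L_i)\cap\mathcal J$ and $A_1,\ldots,A_k$ are chosen so that $\Phi(\chi_{A_i})=e_i$, then
\[
S({\bf x})={\bf x}-\sum_{i=1}^k(\mathcal L_i^*-\lim{\bf x})\,\chi_{A_i}
\]
is a bounded operator $c_{0,\mathcal J}\to c_{0,\mathcal J}$. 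Since $\Phi(S({\bf x}))=\Phi({\bf x})-\sum_i(\mathcal L_i^*-\lim{\bf x})e_i={\bf 0}$, the range of $S$ lies in $\ker\Phi=c_{0,\mathcal I}$, and $S$ fixes every element of $c_{0,\mathcal I}$; hence $S$ is a bounded projection of $c_{0,\mathcal J}$ onto $c_{0,\mathcal I}$.

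Finally I would combine this with the hypothesis. Let $P\colon\ell_\infty\to c_{0,\mathcal J}$ be a bounded projection, which exists because $\mathcal J$ is complemented. Then I claim $S\circ P\colon\ell_\infty\to c_{0,\mathcal I}$ is the desired projection. Its range is contained in $c_{0,\mathcal I}$; it fixes each point of $c_{0,\mathcal I}$, since $P$ acts as the identity on $c_{0,\mathcal J}\supseteq c_{0,\mathcal I}$ and $S$ acts as the identity on $c_{0,\mathcal I}$; and it is idempotent, because for any ${\bf x}\in\ell_\infty$ the element $SP({\bf x})$ lies in $c_{0,\mathcal J}$, so $P$ fixes it, giving $(SP)^2({\bf x})=S(SP({\bf x}))=SP({\bf x})$. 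This is precisely the transitivity of complementation: $c_{0,\mathcal I}$ is complemented in $c_{0,\mathcal J}$ and $c_{0,\mathcal J}$ is complemented in $\ell_\infty$, whence $c_{0,\mathcal I}$ is complemented in $\ell_\infty$, i.e. $\mathcal I$ is complemented.

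I do not expect a genuine obstacle in this argument; the two ingredients are classical. The only points requiring care are confirming that $c_{0,\mathcal I}$ is truly closed in $c_{0,\mathcal J}$ (inherited from its closedness in $\ell_\infty$) and verifying the idempotency $(SP)^2=SP$, which hinges on the fact that $P$ restricts to the identity on $c_{0,\mathcal J}$, the subspace containing the range of $SP$.
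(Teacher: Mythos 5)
Your proposal is correct and follows essentially the same route as the paper: invoke Theorem \ref{I+particion} to see that $c_{0,\mathcal I}$ is a closed finite-codimensional (hence complemented) subspace of $c_{0,\mathcal J}$, then compose with the projection onto $c_{0,\mathcal J}$ given by the hypothesis, i.e.\ transitivity of complementation. The paper leaves both ingredients implicit in a three-line proof; your explicit projection $S$ and the verification that $S\circ P$ is idempotent are exactly the details being suppressed there.
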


\begin{proof}
    Since $\mathcal I$ is $k$-maximal in $\mathcal J$, it follows that $c_{0,\mathcal I}$ is complemented in $c_{0,\mathcal J}$ by Theorem \ref{I+particion}. Additionally, $c_{0,\mathcal J}$ is complemented in $\ell_\infty$.
    Therefore, $c_{0,\mathcal I}$ is complemented in $\ell_\infty$. This completes the proof.
\end{proof}

\begin{remark}
Concerning the above results, Theorem \ref{I+particion} gives examples of proper ideals $\mathcal I$ and $\mathcal J$ with ${\sf Fin}\subsetneq\mathcal I\subsetneq\mathcal J$ such that $c_{0,\mathcal I}$ is complemented in $c_{0,\mathcal J}$. Indeed, if $\mathcal J$ is a proper ideal and $\mathcal I_1,\ldots,\mathcal I_k$ are maximal ideals such that $\mathcal J\not\subseteq\mathcal I_j$ for all $1\leq j\leq k$, then $\mathcal I\coloneqq\mathcal J\cap(\bigcap_{1\leq j\leq k}\mathcal I_j)$ is $k$-maximal in $\mathcal J$. Thus, $c_{0,\mathcal I}$ is complemented in $c_{0,\mathcal J}$ by Theorem \ref{I+particion}. Notice that in this case $\dim(c_{0,\mathcal J}/c_{0,\mathcal I})=k$. We do not know if there exist examples with $\dim(c_{0,\mathcal J}/c_{0,\mathcal I})=\infty$.
\end{remark}

In spite of the previous results, it is still possible for \( c_{0,\mathcal{I}} \) to contain complemented subspaces that are isomorphic or isometric to \( c_0 \). In particular, we proved in \cite{rincon-uzcategui} that \( \ell_\infty(c_0) \) is isometric to \( c_{0,\fin^\omega} \), and since \( \ell_\infty(c_0) \) clearly contains a complemented copy of \( c_0 \), the same holds for \( c_{0,\fin^\omega} \). This contrasts with the situation in \( \ell_\infty \), where every subspace isomorphic or isometric to \( c_0 \) is necessarily not complemented. This leads to the following result.

\begin{proposition}\label{complemented copies of c_0 in c_0I}
    Let $\mathcal I$ be an ideal on $\mathbb N$. If $c_{0,\mathcal I}$ has a complemented copy of $c_0$, then $\mathcal I$ is not complemented.
\end{proposition}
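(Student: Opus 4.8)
The plan is to argue by contradiction, using only the transitivity of complementation together with the classical fact—already invoked in the paragraph preceding the statement—that no subspace of $\ell_\infty$ isomorphic to $c_0$ can be complemented in $\ell_\infty$.

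Suppose, toward a contradiction, that $\mathcal{I}$ is complemented, so that there is a bounded projection $R\colon\ell_\infty\to c_{0,\mathcal{I}}$ onto $c_{0,\mathcal{I}}$. By hypothesis, $c_{0,\mathcal{I}}$ contains a subspace $Y$ with $Y\sim c_0$ together with a bounded projection $P\colon c_{0,\mathcal{I}}\to Y$ onto $Y$. First I would verify that complementation is transitive in this situation: the composition $P\circ R\colon\ell_\infty\to Y$ is again a projection onto $Y$. Indeed, it fixes every point of $Y$, since for $y\in Y\subseteq c_{0,\mathcal{I}}$ we have $R(y)=y$ and $P(y)=y$; and its range lies in $Y\subseteq c_{0,\mathcal{I}}$, so idempotence of $P\circ R$ follows from idempotence of $P$ together with the fact that $R$ acts as the identity on $c_{0,\mathcal{I}}$. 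This exhibits $Y$ as a complemented subspace of $\ell_\infty$.

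Since $Y\sim c_0$, this produces a complemented copy of $c_0$ inside $\ell_\infty$, which is impossible by the classical obstruction recalled above. Therefore $\mathcal{I}$ cannot be complemented. One could equally well first apply Lindenstrauss's theorem to identify $c_{0,\mathcal{I}}$ with $\ell_\infty$ and then transport the complemented copy of $c_0$ along the resulting isomorphism (an isomorphism carries a complemented subspace to a complemented subspace, preserving the isomorphism type), reducing to the same contradiction. The only nontrivial ingredient is the classical non-complementation theorem for copies of $c_0$ in $\ell_\infty$; the transitivity step is entirely routine, so I do not anticipate any genuine obstacle.
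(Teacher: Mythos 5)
Your proof is correct and follows essentially the same route as the paper: assume $c_{0,\mathcal I}$ is complemented in $\ell_\infty$, use transitivity of complementation to exhibit a complemented copy of $c_0$ inside $\ell_\infty$, and derive a contradiction from Lindenstrauss's theorem (equivalently, the classical non-complementability of $c_0$ in $\ell_\infty$). The only cosmetic difference is that you verify transitivity via the composition of projections, while the paper phrases it with direct sum decompositions.
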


\begin{proof}
  Suppose that $\mathcal I$ is complemented, that is, $\ell_\infty=c_{0,\mathcal I}\oplus W$. Since $c_{0,\mathcal I}$ has a complemented copy of $c_0$, there is a subspace
  $E$ isomorphic to $c_0$ such that $c_{0,\mathcal I}=E\oplus Z$. Thus, $E$ is complemented in $\ell_\infty$, which is impossible by Lindenstrauss's theorem.
\end{proof}

Now, we  present examples  of ideals that are not complemented. 
If $\mathcal I$ and $\mathcal J$ are ideals, their Fubini product $\mathcal I\times\mathcal J$ is the ideal on $\mathbb N\times\mathbb N$ defined by
\begin{gather*}
    A\in\mathcal I\times\mathcal J\quad \mbox{if and only if} \quad \{m\in\mathbb N\,\colon\,\{n\in\mathbb N\,\colon\,(m,n)\in A\}\not\in\mathcal J\}\in\mathcal I.
\end{gather*}

Regarding the Baire property of the Fubini product, in \cite{filipowetal2025} was shown that $\ideal\times\fin$ has the Baire property (hence,  it is meager) for any ideal $\ideal$. On the other hand, they also showed that $\fin\times \ideal$ has the Baire property exactly when $\ideal$ has it.

If $\mathcal A$ is a family of subsets of $\mathbb N$, the orthogonal of the family $\mathcal A$ is defined by 
$$
\mathcal A^\perp=\{B\subset\mathbb N\,\colon\,(\forall A\in\mathcal A)(B\cap A\in{\sf Fin})\}.
$$

\begin{theorem}
Let $\mathcal I$ be a proper ideal on $\mathbb N$. Then, 
\begin{enumerate}
    \item $\mathfrak{ad}({\sf Fin}\times\mathcal I)=\mathfrak{ad}(\mathcal I\times{\sf Fin})=2^{\aleph_0}$. In particular, 
    ${\sf Fin}\times\mathcal I$ and $\mathcal I\times{\sf Fin}$ are not complemented.
    \item $\mathcal I^{\omega\perp}$ is not complemented.
\end{enumerate}
\end{theorem}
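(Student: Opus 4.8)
The plan is to reduce both parts to Corollary \ref{Kania result}: a complemented ideal $\mathcal{K}$ satisfies $\mathfrak{ad}(\mathcal{K}^+)\le\aleph_0$, so in each case it suffices to exhibit an $\mathcal{I}$-AD family of size $2^{\aleph_0}$ inside the relevant co-ideal. The common ingredients are two classical objects: a family $\{E_\alpha:\alpha<2^{\aleph_0}\}$ of infinite subsets of $\mathbb{N}$ that are pairwise almost disjoint (for part (1)), and a family $\{f_\alpha:\alpha<2^{\aleph_0}\}$ of functions $\mathbb{N}\to\mathbb{N}$ that are pairwise eventually different, i.e. $\{m:f_\alpha(m)=f_\beta(m)\}$ is finite whenever $\alpha\neq\beta$ (for part (2)). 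Both are standard; the latter can be read off the branches of the binary tree $2^{<\omega}$.

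For part (1), I would first record the section description of the Fubini products: writing $B_m=\{n:(m,n)\in B\}$, one has $B\in{\sf Fin}\times\mathcal{I}$ iff $B_m\in\mathcal{I}$ for all but finitely many $m$, while $B\in\mathcal{I}\times{\sf Fin}$ iff $\{m:B_m\text{ is infinite}\}\in\mathcal{I}$. For ${\sf Fin}\times\mathcal{I}$ I take the horizontal strips $E_\alpha\times\mathbb{N}$: each lies in the co-ideal since $\{m:(E_\alpha\times\mathbb{N})_m\notin\mathcal{I}\}=E_\alpha$ is infinite, while $(E_\alpha\times\mathbb{N})\cap(E_\beta\times\mathbb{N})=(E_\alpha\cap E_\beta)\times\mathbb{N}\in{\sf Fin}\times\mathcal{I}$ because $E_\alpha\cap E_\beta$ is finite. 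Symmetrically, for $\mathcal{I}\times{\sf Fin}$ I take the vertical strips $\mathbb{N}\times E_\alpha$, all of whose sections equal the infinite set $E_\alpha$ (so they lie in the co-ideal, using $\mathbb{N}\notin\mathcal{I}$), while $(\mathbb{N}\times E_\alpha)\cap(\mathbb{N}\times E_\beta)=\mathbb{N}\times(E_\alpha\cap E_\beta)$ has only finite sections and hence lies in $\mathcal{I}\times{\sf Fin}$. In either case $\mathfrak{ad}=2^{\aleph_0}$, and Corollary \ref{Kania result} forbids complementation. One could alternatively deduce the $\mathcal{I}\times{\sf Fin}$ case from the fact that this ideal is always meager together with Lemma \ref{lemma talagrand}, but the direct construction above is uniform.

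For part (2), I identify $\mathcal{I}^\omega$ with the ideal on $\mathbb{N}\times\mathbb{N}$ of those $B$ all of whose sections $B_m$ lie in $\mathcal{I}$, and I note that $\mathcal{I}^{\omega\perp}$ is a proper ideal (it omits $\mathbb{N}\times\mathbb{N}$ since $\mathcal{I}^\omega$ contains infinite sets). The crucial observation is that the graph $G_f=\{(m,f(m)):m\in\mathbb{N}\}$ of any $f\colon\mathbb{N}\to\mathbb{N}$ has singleton sections, hence $G_f\in\mathcal{I}^\omega$; therefore $G_f\cap G_f=G_f$ is an infinite intersection of $G_f$ with a member of $\mathcal{I}^\omega$, which shows $G_f\notin\mathcal{I}^{\omega\perp}$, i.e. $G_f\in(\mathcal{I}^{\omega\perp})^+$. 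Applying this to a pairwise eventually different family $\{f_\alpha\}$, all the graphs $G_{f_\alpha}$ lie in the co-ideal, while for $\alpha\neq\beta$ the set $G_{f_\alpha}\cap G_{f_\beta}=\{(m,f_\alpha(m)):f_\alpha(m)=f_\beta(m)\}$ is finite and so belongs to $\mathcal{I}^{\omega\perp}$. This yields an $\mathcal{I}^{\omega\perp}$-AD family of size $2^{\aleph_0}$ in $(\mathcal{I}^{\omega\perp})^+$, whence $\mathfrak{ad}=2^{\aleph_0}$ and Corollary \ref{Kania result} again gives non-complementation.

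The computations are routine; the two points demanding care are the section descriptions of the Fubini products and, for part (2), the recognition that the natural co-ideal witnesses for the orthogonal ideal are exactly the graphs of functions. The latter is the conceptual crux: graphs land in $\mathcal{I}^\omega$ precisely because singletons belong to every ideal containing ${\sf Fin}$, and it is this fact that lets an eventually different family play the role that an almost disjoint family plays in part (1).
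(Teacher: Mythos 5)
Your proof is correct, and it diverges from the paper's argument in two genuine ways. For ${\sf Fin}\times\mathcal I$ you argue exactly as the paper does (horizontal strips $E_\alpha\times\mathbb N$ over a ${\sf Fin}$-AD family of size $2^{\aleph_0}$), but for $\mathcal I\times{\sf Fin}$ the paper constructs nothing: it quotes the result of \cite{filipowetal2025} that $\mathcal I\times{\sf Fin}$ always has the Baire property, concludes meagerness via Theorem \ref{talagrand theorem}, and then applies Lemma \ref{lemma talagrand} and Leonetti's theorem; your vertical strips $\mathbb N\times E_\alpha$ reach the same conclusion by a self-contained direct computation (you even flag the meagerness route as an alternative), at the cost of not recording the stronger fact that $\mathcal I\times{\sf Fin}$ is meager. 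The more substantial divergence is in part (2). The paper's proof is Banach-space-theoretic: it invokes the isometry $c_{0,\mathcal I^{\omega\perp}}\cong c_0(c_{0,\mathcal I^\perp})$ from \cite[Theorem 5.4]{rincon-uzcategui}, Cembranos' theorem \cite{cembranos} to obtain a complemented copy of $c_0$ inside this space, and then Proposition \ref{complemented copies of c_0 in c_0I}, which rests on Lindenstrauss' theorem. You instead stay entirely combinatorial: graphs of pairwise eventually different functions have singleton sections, hence lie in $\mathcal I^\omega$ and therefore in $(\mathcal I^{\omega\perp})^+$, and their pairwise intersections are finite, so $\mathfrak{ad}\bigl((\mathcal I^{\omega\perp})^+\bigr)=2^{\aleph_0}$ and Corollary \ref{Kania result} applies; your checks (properness of $\mathcal I^{\omega\perp}$, the section descriptions of the Fubini products, membership of the graphs in the co-ideal) are all sound. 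Your route is more elementary and uniform---both parts reduce to the single criterion of Corollary \ref{Kania result}---and it establishes the extra combinatorial statement $\mathfrak{ad}\bigl((\mathcal I^{\omega\perp})^+\bigr)=2^{\aleph_0}$, which the paper's part (2) never asserts; what the paper's approach buys instead is the structural information that $c_{0,\mathcal I^{\omega\perp}}$ contains a complemented copy of $c_0$, a fact that ties into its surrounding discussion of complemented copies of $c_0$ inside spaces $c_{0,\mathcal I}$.
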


\begin{proof}
(1) Let $\mathcal J\coloneqq{\sf Fin}\times\mathcal I$. We will prove that $\mathfrak{ad}(\mathcal J^+)=2^{\aleph_0}$. Let $\mathcal A$ be a ${\sf Fin}$-AD family of size $2^{\aleph_0}$. For each $A\in\mathcal A$, define $B_A=A\times\mathbb N$.
Observe that for any $m\in\mathbb N$, the section $\{n\in\mathbb N\,\colon\,(m,n)\in B_A\}$ is empty if $m\not\in A$, and equal to $\mathbb N$ if $m\in A$. Therefore,
$\{B_A\,\colon\,A\in\mathcal A\}$ forms a $\mathcal J^+$-AD family of size $2^{\aleph_0}$. In particular, it follows that ${\sf Fin}\times\mathcal I$ is not complemented by Corollary \ref{Kania result}.

To show that $\mathfrak{ad}(\mathcal I\times{\sf Fin})=2^{\aleph_0}$, we know that $\mathcal I\times{\sf Fin}$ has the Baire property by \cite[Proposition 2.6]{filipowetal2025}. Consequently, it is meager by Theorem \ref{talagrand theorem}. Therefore, we have $\mathfrak{ad}(\mathcal I\times{\sf Fin})=2^{\aleph_0}$ by Lemma \ref{lemma talagrand}. Now, from \cite{Leonetti2018} it follows that $\mathcal I\times{\sf Fin}$ is not complemented.

(2) According to  \cite[Theorem 5.4]{rincon-uzcategui}, we have the isometry $c_{0,\mathcal I^{\omega\perp}}\cong c_0(c_{0,\mathcal I^\perp})$. By the main result of \cite{cembranos}, the space $c_0(c_{0,\mathcal I^\perp})$ contains a complemented copy of $c_0$. Consequently, by Proposition \ref{complemented copies of c_0 in c_0I}, $c_{0,\mathcal I^{\omega\perp}}$ is not complemented in $\ell_\infty$.   
\end{proof}

We say that an ideal $\ideal$ is {\em strongly complemented} if there is an operator $Q$ as in the hypothesis of Theorem \ref{proyecciones1b}. We have the following diagram summarizing some of our results:

\begin{gather*}
    \begin{array}{ccccccc}
        \text{Strongly complemented} & \xRightarrow{\not\Leftarrow} &\text{Complemented + \eqref{Q-mult}} & \xRightarrow{\not\Leftarrow} & \text{Complemented}  \\
        \Updownarrow & & \Updownarrow & & \Downarrow{\not\Uparrow}\\
          \text{Strongly } \omega\text{-maximal} & \xRightarrow{\not\Leftarrow} & \omega\text{-maximal} & \xRightarrow{\not\Leftarrow} &  \mbox{$\mathfrak{ad}\leq\aleph_0$} & \xRightarrow{\not\Leftarrow} & \mbox{Non-meager}
    \end{array}
\end{gather*}

\bigskip

To conclude, we provide examples showing that all the implications in the diagram are strict. Example~\ref{Ejemplo-no-omega-max} shows that the first implication in the lower row is strict; consequently, the corresponding implication in the upper row is also strict. Moreover, there exists a complemented ideal that is not 
$\omega$-maximal \cite[Example 2]{hrusak-saenz2025}, which shows that the second implication in the upper row is strict.
For the second implication in the lower row, an example is given by \cite[Lemma 2.2 and Corollary 4.1]{kadetsetal2022}. A counterexample to the last downward implication between the two rows of the diagram is given in \cite[Example 1]{hrusak-saenz2025}. Finally, for the last implication in the lower row, recall that
$\fin\times \ideal$ is not meager whenever $\ideal$ is a maximal ideal, but $\mathfrak{ad}({\sf Fin}\times\mathcal I)=2^{\aleph_0}$.

\section*{Acknowledgments}

We would like to thank  Alan Dow for suggesting the Example \ref{Ejemplo-no-omega-max}. We also thank for the partial support given by  the VIE-grant 4248 of Universidad Industrial de Santander.

\end{document}